\newtheorem{thm}{Theorem}[section]
\newtheorem{cor}[thm]{Corollary}
\newtheorem{prop}[thm]{Proposition}
\newtheorem{lem}[thm]{Lemma}
\theoremstyle{definition}
\newtheorem{defn}[thm]{Definition}
\newtheorem{exas}[thm]{Example}
\newtheorem{rem}[thm]{Remark}
\newtheorem{conj}[thm]{Conjecture}
\let\phi\varphi
\newcommand{\mg}{\mathcal{G}}
\newcommand{\ds}{\displaystyle}
\newcommand{\bb}[1]{\mathbb{#1}}
\newcommand{\vp}{\varphi}
\begin{document}
\title{On the ideals of ultragraph Leavitt path algebras}
\maketitle
\begin{center}
T. T. H.~Duyen\footnote{Faculty of Pedagogy, Hanoi Pedagogical University 2, Hanoi, Vietnam. E-mail address: \texttt{haiduyenclhd@gmail.com}},
D.~Gon\c{c}alves\footnote{Departamento de Matem\'{a}tica, Universidade Federal de Santa Catarina, Florian\'{o}polis, 88040-900, Brazil. E-mail address: \texttt{daemig@gmail.com}}	
and  T.\,G.~Nam\footnote{Institute of Mathematics, VAST, 18 Hoang Quoc Viet, Cau Giay, Hanoi, Vietnam. E-mail address: \texttt{tgnam@math.ac.vn}
		
}
	\end{center}
	
\begin{abstract} In this article, we provide an explicit description of a set of generators for any ideal of an ultragraph Leavitt path algebra. We provide several additional consequences of this description, including information about generating sets for graded ideals, the graded uniqueness and Cuntz-Krieger theorems, the semiprimeness, and the semiprimitivity of ultragraph Leavitt path algebras, a complete characterization of the prime and primitive ideals of an ultragraph Leavitt path algebra. We also show that every primitive ideal of an ultragraph Leavitt path algebra is exactly the annihilator of a Chen simple module. Consequently, we prove Exel's Effros-Hahn conjecture on primitive ideals in the ultragraph Leavitt path algebra setting (a conclusion that is also new in the context of Leavitt path algebras of graphs).
\medskip

\textbf{Mathematics Subject Classifications 2020}: 16S88, 16S99,  05C25
		
\textbf{Key words}: Ultragraph Leavitt path algebras, prime and primitive ideals, simple modules, Steinberg algebras, Effros-Hahn conjecture. 
\end{abstract}
	
\section{Introduction}
The study of algebras associated with combinatorial objects is a thriving topic in classical ring theory. One of the key goals of the subject is to establish relationships between combinatorial properties of the initial object and algebraic properties of the associated algebra. Another important direction is the study of the connections with other branches of mathematics, as C*-algebras and symbolic dynamics. Among interesting examples of algebras associated with combinatorial objects we mention, for example, the following ones: graph $C^*$-algebras, Leavitt path algebras, higher rank graph algebras, Kumjian-Pask algebras, and ultragraph $C^*$-algebras (we refer the reader to \cite{a:lpatfd} and \cite{AAS} for a more comprehensive list). 

There is no doubt that, among the non-analytical algebras mentioned above, the Leavitt path algebra associated with a graph figures as the most studied one. For these algebras their structure, and connections with C*-algebra theory and symbolic dynamics, have been (and still is) studied in detail. 

Ultragraphs and ultragraph C*-algebras were defined by Mark Tomforde in \cite{tomf:auatelaacaastg03} as a unifying approach to C*-algebras associated with infinite matrices (also known as Exel-Laca algebras) and graph $C^*$-algebras. They have proved to be a key ingredient in the study of Morita equivalence of Exel-Laca and graph $C^*$-algebras \cite{kmst:gaelaauacutme}. Recently, Castro, Gon\c{c}alves, Royer, Tasca, Wyk, among others, have established nice connections between ultragraph $C^*$-algebras and the symbolic dynamics of shift spaces over infinite alphabets (see \cite{GilDanDan}, \cite{gr:uassoia},  \cite{gr:iaessvuatca} and \cite{tasca}).

The Leavitt path algebra associated with an ultragraph was defined by Imanfar, Pourabbas, and Larki in \cite{ima:tlpaou}. In \cite{gr:saccfulpavpsgrt}, a slightly different definition appeared, and in \cite{dgv:uavlggwatgut} de Castro, Gon\c{c}alves and van Wyk showed that the resulting algebras are isomorphic. As in the C*-algebraic setting, the ultragraph Leavitt path algebras unify the study of Leavitt path algebras associated with graphs and the algebras associated with infinite matrices. Further to being a convenient way to express both types of algebras mentioned, it was shown in \cite{ima:tlpaou} that ultragraph Leavitt path algebras provide examples of algebras that can not be realized as the Leavitt path algebra of a graph; that is, the class of ultragraph Leavitt path algebras is strictly larger than the class of Leavitt path algebras of graphs. In fact, we will build on the work on \cite{ima:tlpaou} and provide a large class of ultragraph algebras that can not be seen as the Leavitt path algebra of a graph. 

Since ultragraph Leavitt path algebras form a strictly larger class than Leavitt path algebras of graphs, their study encompasses an extra layer of complexity. Nevertheless, recently several results regarding whether the $C^*$-algebraic theory of ultragraphs has analogues in the algebraic setting, and whether results about Leavitt path algebras of graphs can be generalized to ultragraph Leavitt path algebras, have been obtained. We mention the following. Gon\c{c}alves and Royer \cite{gr:iaproulpa} extended Chen's construction of irreducible representations of graph Leavitt path algebras to ultragraph Leavitt path algebras (see \cite{c:irolpa});  Gon\c{c}alves and Royer \cite{gr:saccfulpavpsgrt} realized ultragraph  Leavitt path algebras as partial skew group rings. Using this realization they characterized Artinian ultragraph Leavitt path algebras and gave simplicity criteria for these algebras; and  de Castro, Gon\c{c}alves and van Wyk \cite{dgv:uavlggwatgut} realized ultragraph Leavitt path algebras as Steinberg algebras, and applied this result to obtain generalized uniqueness theorems for ultragraph
Leavitt path algebras. Nam and the third author \cite{NN:2020} characterized purely infinite simple ultragraph Leavitt path algebras, and established the Trichotomy Principle for graded simple ultragraph Leavitt path algebras.

The current article is a continuation of this direction. Our goal is to describe many of the ideal related properties of ultragraph Leavitt path algebras. For example, given the important role of prime and primitive ideals in the theory on non-commutative rings we describe when an ultragraph Leavitt path algebra is prime, or primitive, in terms of combinatorial properties of the underlying ultragraph. We also study prime and primitive ideals of ultragraph Leavitt path algebras in detail. As mentioned in \cite{Bell}, in general, it is very difficult to find all irreducible representations of a ring but often just knowing
the annihilators of the simple modules will allow one to prove non-trivial facts about a ring. We, therefore, extend the theory of Chen simple modules to ultragraph Leavitt path algebras and use it to prove the Exel's Effros-Hahn conjecture for ultragraph Leavitt path algebras. In particular, we should mention that our result regarding the Exel's Effros-Hahn conjecture is also new in the context of Leavitt path algebras of graphs.

Many of the results of our paper rely on an explicit description of a set of generators for any ideal of an ultragraph Leavitt path algebra. This is done for Leavitt path algebras of graphs in \cite[Theorem 2.1]{abcr:tscilpaoag} by Abrams, Bell, Colar, and Rangaswamy. The result in \cite[Theorem 2.1]{abcr:tscilpaoag} is the motivational starting point for our work, and to extend this result to ultragraph Leavitt path algebras is the first goal of our paper. Our proof is based on the one of \cite[Theorem 2.1]{abcr:tscilpaoag}, however, we should mention that, although ultragraphs benefit from the same kind of intuition available for graphs, very often the techniques involved in generalizing results know for graphs require non-trivial ideas to be developed. Also, sometimes the graph intuition may be misleading and a detailed approach is necessary to deal with ultragraphs. The reader should keep these points in mind throughout the paper.


The article is organized as follows. In Section 2, for the reader's convenience, we provide subsequently necessary notions and facts on ultragraphs and ultragraph Leavitt path algebras. We also prove one of our main theorems, that is, we provide an explicit description of a set of generators for any ideal of an ultragraph Leavitt path algebra (Theorem~\ref{generatingset}), as well as an explicit description of a set of generators for any graded ideal of an ultragraph Leavitt path algebra (Theorem~\ref{gi-generatingset}). Moreover, in Proposition~\ref{ExULPaNLPA} we describe a class of ultragraph Leavitt path algebras that are not isomorphic to the Leavitt path algebra of any graph and we show that ultragraph Leavitt path algebras are always semiprime (Theorem~\ref{semiprime}) and always have null Jacobson radical (Theorem~\ref{semiprimitive}). In Section 3, we describe for which ultragraphs the associated Leavitt algebra is prime (Theorem~\ref{prime1}), we characterize the prime ideals of an ultragraph Leavitt path algebra (Theorem~\ref{prime6}), show that every prime ideal of an ultragraph Leavitt path algebra is graded if, and only if, the ultragraph satisfies Condition~(K) (Corollary \ref{prime7}), and provide a method of constructing non-graded prime ideals of an ultragraph Leavitt path algebra (Corollary \ref{prime8}).

We study primitive ideals of ultragraph Leavitt path algebras in Section~4. In particular, we characterize primitive ultragraph Leavitt path algebras $L_K(\mathcal G)$ in terms of the combinatorics of the ultragraph $\mathcal G$ in Theorem~\ref{primitive1}, and we describe the primitive ideals of an ultragraph Leavitt path algebra in Theorem~\ref{primitive3}.
We dedicate Section~5 to the study of Chen simple modules. In the main result of this section (Theorem~\ref{Chenmod4}), we show that for every primitive ideal $P$ of an ultragraph Leavitt path algebra, there exists a Chen simple module $S$ such that the annihilator of $S$ is $P$. Finally, in Section~6, using groupoid theory, we prove the Exel's Effros-Hahn conjecture for ultragraph Leavitt path algebras (Theorem~\ref{Exel's Effros-Hahn}).

\section{Generating sets for ideals in ultragraph Leavitt path algebras}
The main goal of this section is to provide an explicit description of a set of generators for any ideal of an ultragraph Leavitt path algebra (Theorem~\ref{generatingset}). We provide several additional consequences of this description, including information about generating sets for graded ideals (Theorem \ref{gi-generatingset}), the graded uniqueness and Cuntz-Krieger theorems (Theorems \ref{gutheo} and \ref{cgutheo}), the semiprimeness (Theorem \ref{semiprime}) and the semiprimitivity (Theorem \ref{semiprimitive}) of ultragraph Leavitt path algebras.

\subsection{Generating sets for ideals} We begin this subsection by recalling some notions regarding ultragraph theory, as introduced by Tomforde in \cite{tomf:auatelaacaastg03} and \cite{tomf:soua}.

An \textit{ultragraph} $\mathcal{G} = (G^0, \mathcal{G}^1, r, s)$ consists of a countable set of vertices $G^0$, a countable set of edges $\mathcal{G}^1$, and functions $s : \mathcal{G}^1 \longrightarrow G^0$ and $r : \mathcal{G}^1 \longrightarrow \mathcal{P}(G^0)\setminus \left\{ \varnothing\right\}$, where $\mathcal{P}(G^0)$ denotes the set of all subsets of $G^0$.

A vertex $v \in G^0$ is called a \textit{sink} if $s^{-1}(v)=\varnothing$ and $v$ is called an \textit{infinite emitter} if $|s^{-1}(v)|=\infty$. Denote the set of sinks in an ultragraph by $G^0_s$. A \textit{singular vertex} is a vertex that is either a sink or an infinite emitter. The set of all singular vertices is denoted by $\text{Sing}(\mathcal{G})$. 
A vertex $v \in G^0$ is called a \textit{regular vertex} if $0<|s^{-1}(v)|<\infty$. 

For an ultragraph $\mathcal{G} = (G^0, \mathcal{G}^1, r, s)$ we let $\mathcal{G}^0$ denote the smallest subset of $\mathcal{P}(G^0)$ that contains $\{v\}$ for all $v\in G^0$, contains $r(e)$ for all $e\in \mathcal{G}^1$, and is closed under relative complements, finite unions and finite intersections. Elements of $\mathcal{G}^0$ are called {\it generalized vertices}.  


A \textit{finite path} in an ultragraph $\mathcal{G}$ is either an element of $\mathcal{G}^0$, or a sequence $\alpha_1\alpha_2\cdots\alpha_n$ of edges with $s(\alpha_{i+1})\in r(\alpha_i)$ for all $1\le i\le n-1$. We say that the path $\alpha$ has \textit{length} $|\alpha| := n$, consider the elements of $\mathcal{G}^0$ to be paths of length $0$, and denote by $\mathcal{G}^*$ the set of all finite paths in $\mathcal{G}$. The maps $r$ and $s$ extend naturally to $\mathcal{G}^*$. Note that when $A\in \mathcal{G}^0$ we define $s(A) = r(A) = A$. An \textit{infinite path} in $\mathcal{G}$  is a sequence $e_1e_2\cdots e_n\cdots$ of edges in $\mathcal{G}$ such that $s(e_{i+1})\in r(e_i)$ for all $i\geq 1$. We denote by $\mg^{\infty}$ the set of all infinite paths in $\mg$. For $p = e_1e_2\cdots e_n\cdots\in \mg^{\infty}$, we define $s(p) := s(e_1)$ and  $p^0 :=\{v\in G^0\mid v = s(e_i) \text{ for some } i\}$.

If $\mathcal{G}$ is an ultragraph, then a \textit{closed path} in $\mathcal{G}$ is a path $\alpha=\alpha_1\alpha_2\cdots\alpha_{|\alpha|}\in \mathcal{G}^*$ with $|\alpha|\ge 1$ and $s(\alpha)\in r(\alpha)$. We also say that the closed path $\alpha$ is based at $v=s(\alpha)$. A \textit{cycle} (based at $v$) is a closed path (based at $v$) such that $s(\alpha_i)\neq s(\alpha_j)$ for every $1\leq i\neq j\leq |\alpha|$.


An \textit{exit} for a cycle $\alpha$ is one of the following:
\begin{itemize} 
\item[(1)] an edge $e\in \mathcal{G}^1$ such that there exists an $i$ for which $s(e)\in r(\alpha_i)$ but $e\ne \alpha_{i+1}$.
 
\item[(2)] a sink $w$ such that $w\in r(\alpha_i)$ for some $i$. 
\end{itemize}


\begin{defn}[{\cite[Definition 2.1]{ima:tlpaou}}]\label{utraLevittpathalg}
Let $\mathcal{G}$ be an ultragraph and $K$ a field. The \textit{Leavitt path algebra $L_K(\mathcal{G})$ of $\mathcal{G}$, with coefficients in $K$,} is the $K$-algebra generated by the set $\{s_e, s^*_{e}\mid e\in\mathcal{G}^1\}$ $\cup\left\{p_{_A}\mid A\in\mathcal{G}^0\right\}$, satisfying the following relations, for all $A,B \in \mathcal{G}^0$ and  $e, f\in\mathcal{G}^1$:
\begin{itemize} 	
\item[(1)] $p_{_\emptyset}=0, p_{_A}p_{_B} = p_{_{A\cap B}}$ and $p_{_{A\cup B}} = p_{_A}+p_{_B}-p_{_{A\cap B}}$;
\item[(2)] $p_{s(e)}s_e = s_e = s_ep_{r(e)}$ and $p_{r(e)}s_{e}^* = s_{e}^* = s_{e}^*p_{s(e)}$;
\item[(3)] $s_{e}^*s_f = \delta_{e, f}p_{r(e)}$;
\item[(4)] $p_v = \sum_{s(e)=v}s_es_{e}^*$ for any regular vertex $v$;
\end{itemize} where $p_v$ denotes $p_{_{\{v\}}}$ and $\delta$ is the Kronecker delta.
\end{defn}

It is worth mentioning the following note.

\begin{rem}
There have been different definitions of Leavitt path algebras of ultragraphs, and the difference of these definitions lies in how the set of generalized vertices are defined. Given an ultragraph $\mathcal{G}$, let $\mathcal{B}$ denote the smallest subset of $\mathcal{P}(G^0)$ that contains $\{v\}$ for all $v\in G^0$, contains $r(e)$ for all $e\in \mathcal{G}^1$, and is closed under finite unions and finite intersections. We denote by $L_K(\mathcal{G}_r)$ the Leavitt path algebra associated with $\mathcal{G}$ by allowing $A,B \in \mathcal{B}$ in item (1) of Definition \ref{utraLevittpathalg}, that means,  $L_K(\mathcal{G}_r)$ is the	algebra as defined in \cite[Theorem 2.11]{tomf:auatelaacaastg03} and \cite[Definition 2.3]{gr:saccfulpavpsgrt}. However, in \cite[Proposition 5.2]{dgv:uavlggwatgut} the authors showed that $L_K(\mathcal{G}_r)$ and $L_K(\mathcal{G})$ are isomorphic to each other.
\end{rem}

We usually denote $s_A := p_{_A}$ for $A\in\mathcal{G}^0$ and $s_\alpha := s_{e_1}\cdots s_{e_n}$ for $\alpha = e_1\cdots e_n \in \mathcal{G}^*$. It is easy to see that the mappings given by $p_{_A}\longmapsto p_{_A}$ for $A\in \mathcal{G}^0$, and $s_e\longmapsto s^*_e$, $s^*_e\longmapsto s_e$ for $e\in \mathcal{G}^1$, produce an involution on the algebra $L_K(\mathcal{G})$, and for any path $\alpha = \alpha_1\cdots\alpha_n$ there exists $s^*_{\alpha} := s^*_{e_n}\cdots s^*_{e_1}$. Also, $L_K(\mathcal{G})$ has the following \textit{universal property}: if $\mathcal{A}$ is a $K$-algebra generated by a family of elements $\{b_A, c_e, c^*_e\mid A\in \mathcal{G}^0, e\in \mathcal{G}^1\}$ satisfying the relations analogous to (1) - (4)  in Definition~\ref{utraLevittpathalg}, then there always exists a $K$-algebra homomorphism $\varphi: L_K(\mathcal{G})\longrightarrow \mathcal{A}$ given by $\varphi(p_A) = b_A$, $\varphi(s_e) = c_e$ and $\varphi(s^*_e) = c^*_e$.	Furthermore, we recall a few other useful properties as follows.

\begin{lem}\label{graded}
If $\mathcal{G}$ is an ultragraph and $K$ is a field, then the Leavitt path algebra $L_K(\mathcal{G})$ has the following properties:

$(1)\ (${\cite[Theorem 2.10]{ima:tlpaou}}$)$ All elements of the set $\{p_A, s_e, s^*_e\mid A\in \mathcal{G}^0\setminus\{\varnothing\}, e\in \mathcal{G}^1\}$ are nonzero.

$(2)\ (${\cite[Theorem 2.9]{ima:tlpaou}}$)$ $L_K(\mathcal{G})$ is of the form
\begin{center}
$\textnormal{Span}_K\{s_{\alpha}p_{_A}s_{\beta}^*\mid \alpha,\beta \in \mathcal{G}^*, A\in \mathcal{G}^0 \textnormal{ and } r(\alpha)\cap A \cap r(\beta)\neq \varnothing\}$.	
\end{center}
Furthermore, $L_K(\mathcal{G})$ is a $\mathbb{Z}$-graded $K$-algebra by the grading 
\begin{center}	
$L_K(\mathcal{G})_n=\textnormal{Span}_K\{s_{\alpha}p_{_A}s_{\beta}^*\mid\alpha,\beta \in \mathcal{G}^*, A\in \mathcal{G}^0 \textnormal{ and } |\alpha| -|\beta|=n\}$\quad $(n\in \mathbb{Z})$.
\end{center}
\end{lem}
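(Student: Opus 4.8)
The plan is to treat both assertions as consequences of the presentation of $L_K(\mathcal{G})$ together with its universal property, handling part (2) by a purely formal normal-form-and-homogeneity argument and reserving an explicit representation for the nonvanishing in part (1). For part (1) I would first reduce the claim to the single statement that $p_A \neq 0$ whenever $\varnothing \neq A \in \mathcal{G}^0$: relation (3) gives $s_e^* s_e = p_{r(e)}$, and since $r(e) \in \mathcal{G}^0$ is by definition nonempty, knowing $p_{r(e)} \neq 0$ forces both $s_e \neq 0$ and $s_e^* \neq 0$. Thus the whole of part (1) rests on showing that the Boolean family $\{p_A\}$ does not collapse.

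For the spanning description in part (2), I would argue by reduction to normal form. An arbitrary element is a $K$-linear combination of words in $\{p_A, s_e, s_f^*\}$, and I would rewrite each word as follows: relation (1) merges adjacent factors $p_A p_B$ into the single $p_{A\cap B}$; relation (2) absorbs each $p$ into a neighbouring $s_e$ or $s_f^*$; and, crucially, relation (3) collapses every occurrence of a starred generator immediately followed by an unstarred one, since $s_f^* s_e$ is either $0$ or $p_{r(e)}$. Iterating this ``move creation operators left, annihilation operators right'' procedure reduces every word to the shape $s_\alpha p_A s_\beta^*$; moreover any violation of edge-compatibility along $\alpha$ or $\beta$, or of $r(\alpha)\cap A\cap r(\beta)\neq\varnothing$, introduces a factor $p_\varnothing = 0$ through relations (1)--(2), so exactly the advertised monomials survive. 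The grading is then essentially automatic: assigning $\deg(s_e)=1$, $\deg(s_e^*)=-1$ and $\deg(p_A)=0$, one checks directly that each defining relation (1)--(4) is homogeneous of degree $0$ (for (4) the right-hand terms $s_e s_e^*$ each have degree $1+(-1)=0$). Hence the relation ideal is homogeneous inside the free algebra, the quotient $L_K(\mathcal{G})$ inherits a $\mathbb{Z}$-grading, and the degree-$n$ component is spanned by the words of degree $n$, namely the $s_\alpha p_A s_\beta^*$ with $|\alpha|-|\beta|=n$; the directness of the sum is inherited from the free-algebra grading. Equivalently, one can produce the grading automorphisms from the universal property applied to $s_e \mapsto t s_e$, $s_e^* \mapsto t^{-1} s_e^*$, $p_A \mapsto p_A$.

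It remains to produce, for part (1), a representation $\pi : L_K(\mathcal{G}) \to \mathrm{End}_K(V)$ with $\pi(p_A) \neq 0$ for every nonempty $A$. I would take $V$ to have a basis indexed by a path space of $\mathcal{G}$ that includes the vertices, let $\pi(p_A)$ act as the projection retaining those basis paths whose source lies in $A$, so that $\pi(p_A)$ is nonzero exactly when $A\cap G^0 \neq \varnothing$, i.e. when $A \neq \varnothing$, and let $\pi(s_e)$, $\pi(s_e^*)$ prepend and strip the edge $e$. Once the defining relations are verified, the universal property supplies $\pi$ and the conclusion $p_A \neq 0$ follows. I expect the main obstacle to lie precisely in verifying relation (4) for regular vertices: the naive path action does not by itself split $\pi(p_v)$ as $\sum_{s(e)=v}\pi(s_e)\pi(s_e^*)$, and making this identity hold requires choosing $V$ to carry a genuine ultragraph branching system, in which the datum $r(e)\in\mathcal{G}^0$ and the way these ranges overlap inside $G^0$ are organized so that the blocks indexed by $\{e : s(e)=v\}$ partition the block at $v$. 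This branching-system bookkeeping, absent in the ordinary graph case where ranges are single vertices, is the genuinely ultragraph-specific ingredient, and is the step I would spend the most care on; with it in hand, both parts follow, recovering \cite[Theorems 2.9 and 2.10]{ima:tlpaou}.
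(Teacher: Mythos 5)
The paper offers no proof of this lemma at all: both parts are quoted directly from \cite[Theorems 2.9 and 2.10]{ima:tlpaou}, so there is no internal argument to measure yours against. Your plan for part (2) is correct and is the standard one: rewriting every word via relations (1)--(3) into the normal form $s_{\alpha}p_{_A}s_{\beta}^*$, noting that any incompatibility produces a factor $p_{_\varnothing}=0$, and observing that the defining relations are homogeneous for $\deg(s_e)=1$, $\deg(s_e^*)=-1$, $\deg(p_{_A})=0$, so the grading of the free algebra descends to the quotient. For part (1) your reduction of $s_e\neq 0$ and $s_e^*\neq 0$ to $p_{r(e)}\neq 0$ via $s_e^*s_e=p_{r(e)}$ is fine (recall $r(e)\neq\varnothing$ by definition); you could simplify further by noting that $\{v\}\in\mathcal{G}^0$ for every vertex and $p_{\{v\}}=p_{\{v\}}p_{_A}$ whenever $v\in A$, so the whole of part (1) reduces to $p_v\neq 0$ for single vertices.

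The one genuine gap is precisely the step you flag but do not execute: producing a representation in which relation (4) holds and every $\pi(p_v)$ is nonzero. As you observe, on the naive path module the basis vector ``$v$'' is fixed by $\pi(p_v)$ but annihilated by every $\pi(s_es_e^*)$, so (4) fails at regular vertices. The standard repairs are either (a) to index the basis by a boundary path space, in which finite paths are only permitted to terminate at singular vertices or at range sets that cannot be further refined, so that for a regular $v$ every basis element with source $v$ begins with an edge and (4) holds exactly (this is the branching-system construction used in the literature on ultragraph algebras), or (b) to bypass representations and invoke a concrete model of $L_K(\mathcal{G})$ as a partial skew group ring \cite{gr:saccfulpavpsgrt} or as a Steinberg algebra \cite{dgv:uavlggwatgut}, where the images of the generators are visibly nonzero characteristic functions. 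Until one of these is actually carried out, part (1) of your write-up is a plausible plan rather than a proof; everything else is sound.
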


In light of Lemma~\ref{graded}, an element $x\in L_K(\mathcal{G})_n$ is called a \textit{homogeneous element of degree} $n$.

In \cite[Theorem 2.1]{abcr:tscilpaoag} Abrams, Bell, Colar and Rangaswamy provided an explicit description of a set of generators for any ideal of the Leavitt path algebra of an arbitrary graph. The following theorem is the main result of this section, which extends Abrams, Bell, Colar, and Rangaswamy's theorem to ultragraph Leavitt path algebras.

\begin{thm}\label{generatingset}
Let $K$ be a field, $\mathcal{G}$ an ultragraph and $I$ an ideal of $L_K(\mathcal{G})$.  Then there exists a generating set for $I$ consisting of elements of $I$ of the form

\[(p_A + \sum^m_{i=2}k_is^{r_i}_c)(p_A - \sum_{e\in S}s_es^*_e),\]
where $A\in \mathcal{G}^0$, $k_2, \hdots, k_m\in K$, $r_1, \hdots, r_m$ are positive integers, $S$ is a finite subset of $\mathcal{G}^1$ consisting of edges with the same source vertex $v\in A$, and, whenever $k_i\neq 0$ for some $2\le i\le m$, $c$ is the unique cycle based at $v$ such that $A\subseteq r(c)$.
\end{thm}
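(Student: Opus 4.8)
The plan is to show that the set $X$ of all elements of $I$ having the displayed form generates $I$ as an ideal. Since every element of $X$ lies in $I$, we automatically have $\langle X\rangle \subseteq I$, so the whole content is the reverse inclusion $I \subseteq \langle X\rangle$. I would take an arbitrary nonzero $x \in I$ and argue, by induction on a suitable complexity measure of $x$ (for instance, the number of spanning monomials $s_{\alpha}p_{_A}s_{\beta}^*$ from Lemma~\ref{graded}(2) needed to write $x$, refined by their degrees), that $x \in \langle X\rangle$. The inductive step consists of producing a single generator $y \in X$ together with $a,b \in L_K(\mathcal{G})$ for which $x - ayb \in I$ has strictly smaller complexity; since $ayb \in \langle X\rangle$, the induction hypothesis then yields $x \in \langle X\rangle$, with the base case (a single projection-type monomial) handled directly.

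The engine of the inductive step is a reduction/localization procedure, mirroring the strategy of \cite{abcr:tscilpaoag}. First I would establish an ultragraph analogue of the Reduction Theorem: for any nonzero $z \in L_K(\mathcal{G})$ there exist paths $\mu,\nu \in \mathcal{G}^*$ such that $s_\mu^* z s_\nu$ is nonzero and is either (i) a $K$-combination of projections concentrated on a single generalized vertex $B \in \mathcal{G}^0$, or (ii) a genuine polynomial $\sum_i k_i s_c^{r_i}$ in a single cycle $c$ without exits, localized at a generalized vertex $A \subseteq r(c)$ with $s(c)=v \in A$. The underlying computations rest on relations (2) and (3) of Definition~\ref{utraLevittpathalg}, which collapse products $s_\mu^* s_\alpha$ and $s_\beta^* s_\nu$ to shorter monomials or to $0$ according to prefix agreement, together with the grading and nonvanishing statements of Lemma~\ref{graded}. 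Applying this to a leading piece of $x$ produces an element $w \in I$ of type (i) or (ii) localized at some $A \in \mathcal{G}^0$.

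Next I would convert the localized element $w = p_{_A} w p_{_A}$ into the displayed product form. Applying the Cuntz--Krieger relation (4) only at the regular vertices available, the defect between $p_{_A}$ and the edges actually emitted forces the gap-projection factor $p_{_A} - \sum_{e\in S} s_e s_e^*$, where $S$ is the finite set of edges with common source $v \in A$ that occur; the polynomial part becomes the factor $p_{_A} + \sum_i k_i s_c^{r_i}$, and the no-exit conclusion of the reduction step is exactly what guarantees that $c$ is the unique cycle based at $v$ with $A \subseteq r(c)$. This produces a generator $y \in X$, and by multiplying $y$ on the left and right by appropriate monomials (reversing the localization, up to the gap projection) one reconstructs the leading monomial of $x$, yielding the required $a,b$ with $x - ayb$ of lower complexity.

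The hardest part, and the place where the ultragraph setting genuinely departs from the graph case, is the bookkeeping of set-valued ranges throughout the localization. In a graph the reduction lands at a single vertex $v$, and a cycle at $v$ is literally unique; for an ultragraph the reduction lands at a generalized vertex $A \in \mathcal{G}^0$, which is only well-behaved because $\mathcal{G}^0$ is closed under finite intersections, unions and relative complements, and one must track the inclusions $v \in A$ and $A \subseteq r(c)$ at each step, verifying that intersecting ranges keeps one inside $\mathcal{G}^0$ and keeps the relevant projections nonzero via Lemma~\ref{graded}(1). Correctly isolating a single cycle \emph{without exit relative to $A$}, rather than without exit outright, and showing that this forces the asserted uniqueness of $c$, is the crux: the definitions of cycle and exit for ultragraphs recorded above are tailored precisely to make this work, but the verification that exits can always be deployed to annihilate surviving non-canonical cyclic terms is the main technical obstacle to be overcome.
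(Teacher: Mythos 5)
Your overall framing (the inclusion $\langle X\rangle\subseteq I$ is free, the content is $I\subseteq\langle X\rangle$, to be proved by a minimality/induction argument on a complexity measure built from the spanning monomials of Lemma~\ref{graded}(2), with localization by $s_\mu^*(\cdot)s_\nu$ and by projections) matches the paper. But the engine of your inductive step has a genuine gap. A Reduction-Theorem compression $s_\mu^* x s_\nu$ is designed to annihilate most of the monomials of $x$ and retain only a small corner piece; consequently $ayb=s_\mu(s_\mu^*xs_\nu)s_\nu^*=s_\mu s_\mu^* x s_\nu s_\nu^*$ is not a ``leading monomial'' of $x$, and there is no reason for $x-s_\mu s_\mu^* x s_\nu s_\nu^*$ to have strictly smaller complexity: the idempotents $s_\mu s_\mu^*$ and $s_\nu s_\nu^*$ act nontrivially on \emph{every} term of $x$ whose paths are comparable with $\mu$ and $\nu$, not just on one of them, so the difference typically has as many (or more) spanning monomials. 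What a reduction theorem actually delivers is that a nonzero ideal \emph{contains} an element of the stated form (this is, in spirit, how the paper proves Corollary~\ref{gi-cor}(3) and the primeness/simplicity results), which is strictly weaker than the claim that such elements \emph{generate} the ideal.

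The paper avoids this by never subtracting a single reconstructed piece. It takes a minimal counterexample $x\in I\setminus J$ (minimal number of terms $m$, then minimal lexicographic tuple of path lengths) and writes $x$ as a finite \emph{sum} over a partition-like family of compressions --- for instance $x=\sum_{f\in B}s_fs_f^*x$ over all admissible initial edges, or $x=\sum_i p_{B_i}x+\sum_{v\in F}p_vx$ with the $B_i$ pairwise disjoint, or $x=(x-\sum_{f\in T}xs_fs_f^*)+\sum_{f\in T}xs_fs_f^*$ --- where each summand lies in $I$ and either has strictly fewer terms or shorter paths (hence lies in $J$ by minimality) or is visibly of the generator form. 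It also runs in two stages: first for elements with no $s_{b_i}^*$ factors, then for general elements by stripping the $s_{b_i}^*$ via right multiplication by edges; the gap projection $p_A-\sum_{e\in S}s_es_e^*$ arises from this stripping (with $S$ essentially the set of initial edges of the $b_i$), not from applying relation~(4) at regular vertices as you suggest. Note also that the cycles $c$ in the generators need only be the \emph{unique} cycle based at $v$ with $A\subseteq r(c)$; they need not be without exits, so a reduction step that insists on a no-exit cycle would miss generators. Repairing your argument means replacing the single compression-and-reconstruction step by these exhaustive decompositions, at which point you have essentially rewritten the paper's proof.
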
 
\begin{proof}

Let $I$ be an ideal of $L_K(\mathcal G)$ and let $J$ be the ideal of $L_K(\mg)$ generated by all the elements of $I$ which have the form described in the statement of the theorem. We need to show that $I=J$.	We note that $I\cap \{p_A\mid A\in\mg^{0}\}\in J$ (by choosing $k_i=0$ for $2 \le i\le m$, and $S=\emptyset$).

We first claim that any element of $I$ of the form
\begin{align}\label{form1}
		(k_1s_{a_1}p_{A_1}+\cdots+k_ms_{a_m}p_{A_m})(p_A-\sum_{e \in S}s_es^*_e)
\end{align}
is in $J$, where $A_i, \hdots, A_m, A\in \mathcal{G}^0$, $k_1, \hdots, k_m\in K$, $a_i$'s are  paths in $\mg^*$, and $S$ is a finite subset of $\mathcal{G}^1$ consisting of edges with the same source vertex $v\in A$.

Towards a contradiction, suppose not. That is, assume that there is an element of $I$, of the above form, that is not in $J$. Among such elements, consider all 
	\begin{align*}
		x:= (\sum_{i=1}^mk_is_{a_i}p_{A_i})(
		p_A-\sum_{e \in S}s_es^*_e)
	\end{align*} 
for which $m$ is minimal and, among all such $x$ with minimal $m$, select one for which $(|a_1|,\hdots,|a_m|)$ is smallest in the lexicographic order of $(\bb{Z}^+)^m$ (we keep calling this element $x$). Multiplying by $k^{-1}_1$ if necessary, we may assume that
\begin{align*}
x=(s_{a_1}p_{A_1} + \sum_{i=2}^mk_is_{a_i}p_{A_i})(
p_A-\sum_{e \in S}s_es^*_e).
\end{align*} 	
Since $p_A(p_A-\sum_{e \in S}s_es^*_e) = p_A-\sum_{e \in S}s_es^*_e$, we	have that
	\begin{align*}
		x=(s_{a_1}p_{A_1\cap A} + \sum_{i=2}^mk_is_{a_i}p_{A_i\cap A})(
		p_A-\sum_{e \in S}s_es^*_e).
	\end{align*}
Since $m$ is minimal, we must have that the $s_{a_i}p_{A_i\cap A}$'s are all nonzero and pairwise distinct. We analyze the various possible cases for $x$, and show that in each case we are led to a contradiction.

{\it Case $1$}: $|a_{i}|\geq 1$ for all $1\le i\le m$.
Let $B :=\{f\in \mathcal G^1\mid s^*_fa_i\neq 0 \text{ for some } 1\le i\le m\}$. Then, $$s^*_fx=(\ds s^*_fs_{a_1}p_{A_1\cap A} +\sum_{i=2}^mk_is^*_fs_{a_i}p_{A_i\cap A})(p_A-\ds\sum_{e \in S}s_es^*_e)\in I,$$ for all $f\in B$. 
Notice that $s_f^*x\in I$ and is in the form \ref{form1}. Moreover, either $s^*_fx$ has fewer terms than does $x$, or $s^*_fx$ has the same number of terms as does $x$, in which case $(|f^*a_1|,\dots,|f^*a_m|)$ is smaller than $(|a_1|,\dots,|a_m|)$. By the minimality of $x$, we obtain that $s^*_fx\in J$ for all $f\in B$. Hence,
$s_fs^*_fx\in J$ for all $f\in B$, which yields that $x=\ds\sum_{f\in B}^{m}s_fs^*_fx\in J$, a contradiction.

{\it Case $2$}: $|a_{i}|= 0$ for some $1\le i\le m$. By the minimality assumed on $(|a_1|,\dots,|a_m|)$, this gives $ |a_1|=|a_2|=\cdots=|a_t|=0$, where $1\leq t\leq m$. Without loss of generality,  we may assume that
\begin{align*}
x=(p_{A_1} +\sum_{i=2}^tk_ip_{A_i}+\sum_{j=t+1}^{m}k_js_{a_j}p_{A_j})(p_A-\sum_{e\in S}s_es^*_e)
\end{align*}
where $A_i$'s are pairwise distinct non-empty subsets of $A$, 
$|a_j|\geq 1$ and $A_j \subseteq r(a_j)$ for all $t +1\le j \le m$. 

Consider the case when $t\geq 2$. If $t=2$, then we note that
\[p_{A_1}+k_2p_{A_2}= p_{A_1\setminus A_2}+(1+k_2)p_{A_1\cap A_2}+k_2 p_{A_2\setminus A_1}.\]
By induction on $t$, we have that
	\begin{align*}
		p_{A_1} + \sum_{i=2}^tk_ip_{A_i}=\sum_{i=1}^lk'_{i}p_{B_i},
	\end{align*}
where $k'_i\in K$, $l\in \mathbb{N}$, and $B_i$'s are non-empty elements in $\mg^{0}$ with $B_i\cap B_j =\emptyset$ for all $1\le i\neq j\le l$. Then, 
	\begin{align*}
		x=(\sum_{i=1}^lk_i'p_{B_i}+\sum^m_{j=t+1}k_js_{a_j}p_{A_j})(p_A-\sum_{e \in S}s_es^*_e).
	\end{align*}
	
For each $1\le i\le l$, we have that
\begin{align*}
		p_{B_i}x=(k'_ip_{B_i}+\sum^m_{j=t+1}k_jp_{B_i}s_{a_j}p_{A_j})(p_A-\sum_{e \in S}s_es^*_e)\in I
\end{align*}
and, for each vertex $s(a_k)$, $t+1\leq k \leq m$, such that $s(a_k)\notin \bigcup_{i=1}^l B_i$, we have that 
\begin{align*}
		p_{s(a_k)}x=(\sum^m_{j=t+1}k_jp_{s(a_k)}s_{a_j}p_{A_j})(p_A-\sum_{e \in S}s_es^*_e)\in I
\end{align*}

Notice that both $p_{B_i}x$ and $p_{s(a_k)}x$ have fewer terms than does $x$, and so we obtain that both belong to $J$. Let $F=\{s(a_k)\mid t+1\leq k \leq m \text{ and } s(a_k) \notin \bigcup_{i=1}^l B_i\}$. Then, $x=\ds\left(\sum_{i=1}^l p_{B_i} x + \sum_{v\in F} 	p_{v}x  \right) \in J$, a contradiction, and hence we must have $t=1$.

Proceeding, we consider the case when $m=1$. We note that
$x=p_{A_1}(p_A-\ds\sum_{e \in S}s_es^*_e)$, where $s(e)=v$ for each $e\in S$.
If $v\notin A_1$, then 	$x= p_{A_1}\in I$
and so $x\in J,$ a contradiction. If $v\in A_1$, then $x=p_{A_1}-\ds\sum_{e\in S}s_es^*_e\in J$, a contradiction. Therefore, we must have $m\geq 2$, which is the case we consider next. 

Assume that $m\geq 2$. Then,
$$x=(p_{A_1}+\sum_{i=2}^{m}k_is_{a_i}p_{A_i})(p_A-\sum_{e\in S}s_es^*_e),$$ where $A_i$'s are pairwise distinct non-empty subsets of $A$, $|a_i|\geq 1$ and $A_i \subseteq r(a_i)$ for all $2\le i \le m$. So,
$$p_{s(a_2)}x=(p_{s(a_2)}p_{A_1} + \ds\sum_{i=2}^mk_i  p_{s(a_2)}s_{a_i}p_{A_i})(p_A-\ds\sum_{e \in S}s_es^*_e)\in I.$$ If $s(a_2)\neq s(a_i)$, for some $3\le i\le m$, or $s(a_2)\notin A_1$, then $p_{s(a_2)}x\in J$, since $p_{s(a_2)}x$ has fewer terms than does $x$. This implies that 

\begin{align*}
x-p_{s(a_2)}x=\left(p_{A_1\setminus\{s(a_2)\}}+\sum_{i=3}^mk_i (s_{a_i}-p_{s(a_2)} s_{a_i}) p_{A_i}\right)(p_A-\sum_{e \in S}s_es^*_e)\in J,
\end{align*}
since $x-p_{s(a_2)}x$ has fewer terms than does $x$, and so $x=x-p_{s(a_2)}x+p_{s(a_2)}x\in J$, a contradiction. Hence, we must have $s(a_i)=s(a_2)\in A_1$ for all $2\le i\le m$. Let $w:= s(a_2)$. Then,
\begin{align*}
p_wx=(p_w+\sum_{i=2}^mk_is_{a_i}p_{A_i})(p_A-\sum_{e \in S}s_es^*_e)\in I.
\end{align*}
If $p_wx\in J$, then $x-p_wx=p_{A_1\setminus\{w\}}(p_A-\sum_{e \in S}s_es^*_e)\in J$, and so $x = x -p_wx + p_wx\in J$, a contradiction, showing that $p_wx \notin J.$ 


Consider the case when $v\neq w$. Then, since $p_w(p_A-\sum_{e \in S}s_es^*_e) = p_w = (p_A-\sum_{e \in S}s_es^*_e)p_w$, we have that
\begin{align*}
		p_wxp_w=(p_w+\sum_{i=2}^mk_is_{a_i}p_{A_i} p_w)(p_A-\sum_{e \in S}s_es^*_e) \in I.
\end{align*}
If $w\notin A_i$ for some $2\le i\le m$, then we have that $p_wxp_w\in J$ (since $p_wxp_w$ has fewer terms than does $x$) and \begin{align*}
		p_wx-p_wxp_w=(\sum_{i=2}^mk_is_{a_i}p_{A_i\setminus\{w\}})(p_A-\sum_{e\in S}s_es^*_e)\in J
	\end{align*}
(since it has fewer terms than does $x$), and so $p_wx\in J$, a contradiction. This shows that $w\in A_i$ for all $2\le i\le m$. Furthermore, if $p_w x p_w \in J$ then, by the equality above, we obtain that $p_w x \in J$, since $p_w x= (p_w x - p_w x p_w)+p_w x p_w$. This is a contradiction, and hence we conclude that 
\begin{align*}
p_{w}xp_w= p_w+\sum_{i=2}^mk_is_{a_i}p_w\in I\setminus J.
\end{align*} 

There are now three subcases to consider; we obtain a contradiction in each.
	
Firstly, suppose that there are no cycles based at $w$. Then, $p_wxp_w = p_w\in J$, a contradiction.

Secondly, suppose that there is an unique cycle $c$ based at $w$. For each $2\le i\le m$, we conclude that $a_i=c^{r_i}$ for some positive integer $r_i$, and so
		\begin{align*}
			p_wxp_w=(p_w+\sum_{i=2}^mk_i(s_c)^{r_i})(p_w + 0)
		\end{align*}
has the indicated form of the generators of $J$. This implies that $p_wxp_w\in J$, a contradiction.
		
Thirdly, suppose that there are at least two distinct cycles  based at $w$, say $c$ and $d$, and we have $s^*_cs_d=0 = s^*_d s_c$. Then for some positive integer $n$, where $|c^n|>|a_i|$ for all $2 \le i\le m$, we get
\[(s_c^n)^* (p_wxp_w) s_c^n =p_{r(c)}+\sum_{i=2}^mk_i(s_c^n)^*s_{a_i}s_c^n\in I.\]
If $(s_c^n)^*a_is_c^n\neq 0$, then $(s_c^n)^*a_i\neq0$ and, as $|c^n|>|a_i|$, we obtain that $c^n=a_ib_i$ for some path $b_i\in \mathcal{G}^*$. Whence, since $(s_c^n)^*a_i\neq 0$, we obtain that $a_i=c^{n_i}$ for some positive integer $n_i < n$. Since $s^*_d s_c = 0$, for every $i$ one gets $s^*_d(s_c^n)^*s_{a_i}s_c^ns_d = 0$, and so
\[p_{r(d)} =s^*_d p_{r(c)} s_d= s_d^*(s_c^n)^* (p_w xp_w)s_c^ns_d\in I.\]
Since $I\cap \{p_A\mid A\in \mg^{0}\}\subseteq J$, we have $p_{r(d)}\in J$, and so $p_w = p_w p_{r(d)}\in J$ and $p_wxp_w= p_w(p_wxp_w)\in J$, a contradiction.
		
In any of the three cases, we arrive at a contradiction, and so we must have $v=w$, which is the case that we deal with next.

We proceed similarly to what we did above for $w$. Notice that \[p_v x = (p_v+\sum_{i=2}^mk_is_{a_i}p_{A_i} )(p_A-\sum_{e \in S}s_es^*_e)\in I\setminus J.\] Since $p_v(p_A-\sum_{e \in S}s_es^*_e) = (p_A-\sum_{e \in S}s_es^*_e)p_v = p_v-\sum_{e \in S}s_es^*_e$, we obtain that \[p_v xp_v = (p_v+\sum_{i=2}^mk_is_{a_i}p_{A_i}p_v )(p_A-\sum_{e \in S}s_es^*_e) \in I.\]
If $v\notin A_i$ for some $2\le i\le m$, then 
$p_vxp_v \in J$ (since $p_vxp_v$ has fewer terms than does $x$), and so $$p_vx-p_vxp_v = (\sum_{i=2}^mk_is_{a_i}p_{A_i\setminus\{v\}})(p_A-\sum_{e\in S}s_es^*_e)\in J,$$ and hence $p_vx\in J$, a contradiction. This implies that $v\in A_i$ for all $2\le i\le m$, and $$p_vxp_v =(p_v+\sum_{i=2}^mk_is_{a_i})(p_v-\sum_{e \in S}s_es^*_e)\in I\setminus J.$$
There are three subcases to consider; we obtain a contradiction in each.

Firstly, suppose that there are no cycles based at $v$. Then, $p_vxp_v = (p_v +0)(p_v-\sum_{e \in S}s_es^*_e)\in J$, a contradiction.

Secondly, suppose that there is an unique cycle $c$ based at $w$. For each $2\le i\le m$, we conclude that $a_i=c^{r_i}$ for some positive integer $r_i$, and so
\begin{align*}
p_vxp_v=(p_v + \sum_{i=2}^mk_i(s_c)^{r_i})(p_v-\sum_{e \in S}s_es^*_e)
\end{align*}
has the indicated form for the generators of $J$. This implies that $p_vxp_v\in J$, a contradiction.

Thirdly, suppose that there are at least two distinct cycles  based at $v$. Let $F:= \{f\in \mathcal{G}^1\mid s^*_f s_{a_i}\neq 0 \text{ for some } 2\le i\le m\}$. Assume that $F \cap S \neq \emptyset$. Let $f\in F \cap S$. We have $s_fs^*_f (p_v - \sum_{e \in S}s_es^*_e) = 0$, and  $$s_fs^*_f p_vxp_v = (s_fs^*_f + \sum_{i=2}^mk_is_fs^*_fs_{a_i})(p_v - \sum_{e \in S}s_es^*_e) = (\sum_{i=2}^mk_is_fs^*_fs_{a_i})(p_v - \sum_{e \in S}s_es^*_e).$$ Notice that $s_fs^*_fs_{a_i}$ is either $0$ or $s_{a_i}$, and so the displayed expression for $s_fs^*_f p_vxp_v$ has the correct form. Hence, $s_fs^*_f p_vxp_v\in J$ by the minimality of $m$. Then,
\[p_vxp_v - s_fs^*_f p_vxp_v = (p_v + \sum_{\{a_i\mid s^*_fa_i=0\}}k_is_{a_i})(p_v - \sum_{e \in S}s_es^*_e)\]
is also of the correct form, and the left-hand factor has fewer than $m$ nonzero terms, so $p_vxp_v - s_fs^*_f p_vxp_v\in J$ by the minimality of $m$. This implies that $p_vxp_v = (p_vxp_v - s_fs^*_f p_vxp_v) + s_fs^*_f p_vxp_v\in J$, a contradiction. 

Thus we must have $F \cap S = \emptyset$. We then have $s^*_e s_{a_2} =0$ for all $e\in S$. Let $c$ be a cycle in $\mg$ based at $v$  having the same initial edge as does $a_2$ (such a cycle exists because $a_i$, $i=2,\ldots m$, must be a closed path, otherwise $p_v s_{a_i} p_v=0$). We get that $s^*_es_c = 0$ for all $e\in S$, and $(p_v - \sum_{e \in S}s_es^*_e)s_c = s_c$.

By the hypothesis of this subcase, there exists a cycle $d$ based at $v$ such that $s^*_ds_c=0 = s^*_c s_d$. Then, for some positive integer $n$, with $|c^n|>|a_i|$ for all $2 \le i\le m$, we obtain that
\[(s_c^n)^* (p_vxp_v) s_c^n =p_{r(c)}+\sum_{i=2}^mk_i(s_c^n)^*s_{a_i}s_c^n\in I.\]
If $(s_c^n)^*a_is_c^n\neq 0$, then $(s_c^n)^*a_i\neq 0$, and as $|c^n|>|a_i|$, $c^n=a_ib_i$ for some path $b_i\in \mathcal{G}^*$. Whence, $a_i=c^{m_i}$ for some positive integer $m_i < n$. Since $s^*_d s_c = 0$, for every $i$ one gets  $s^*_d(s_c^n)^*s_{a_i}s_c^ns_d = 0$, and so
\[p_{r(d)} =s^*_d p_{r(c)} s_d= s_d^*(s_c^n)^* (p_w xp_w)s_c^ns_d\in I.\]
Since $I\cap \{p_A\mid A\in \mg^{0}\}\subseteq J$, we have that $p_{r(d)}\in J$, and so $p_v = p_v p_{r(d)}\in J$ and $p_vxp_v= p_v(p_vxp_v)\in J$, the final contradiction required to show the claim.

\medskip\medskip

Now we prove that any arbitrary element of $I$ is in $J$. Again, aiming at a contradiction, suppose that $I\setminus J\neq \emptyset$ and let 
	\begin{align*}
		x=(\sum_{i=1}^mk_is_{a_i}p_{A_i}s^*_{b_i})(p_A-\sum_{e \in S}s_es^*_e) \in I\setminus J,
	\end{align*}
where $k_i\in K$, $a_i, b_i \in \mathcal{G}^*$, $A_1, \hdots, A_m, A\in \mg^{0}$, $S$ is a finite subset of $\mathcal{G}^1$ consisting of edges with the same source vertex $v\in A$, and $m$ is minimal. As before, we may choose $k_1 = 1$. Among all such $x$, select one for which
$(|b_1|,|b_2|,\dots,|b_m|)$ is smallest in the lexicographic order of $(\bb{Z}^+)^m$.

We note that $p_A(p_A - \sum_{e \in S}s_es^*_e) = p_A - \sum_{e \in S}s_es^*_e$, and so $$x=(\sum_{i=1}^mk_is_{a_i}p_{A_i}s^*_{b_i}p_A)(p_A-\sum_{e \in S}s_es^*_e).$$ Then, by the minimality of $m$, we have that $s(b_i)\in A$ if $|b_i| > 0$, and $s(b_i) \cap A\neq \emptyset$ if $|b_i|=0$ (in fact, we may assume, without loss of generality, that $s(b_i) \subseteq A$ if $|b_i|=0$). 

Suppose that $|b_i|>0$ for some $1\le i\le m$, and write $b_i=e_ib'_i$ for some $e_i\in \mg^1$ and $b'_i\in \mathcal{G}^*$. If $e_i\in S$, then 
\[s^*_{b_i}(p_A-\sum_{e \in S}s_es^*_e)= s^*_{b'_i}s^*_{e_i}(p_A-\sum_{e \in S}s_es^*_e) =s^*_{b'_i}s^*_{e_i}-s^*_{b'_i}s_{e_i}^*=0.\] So, we may assume that if $|b_i| > 0$  in the indicated expression for $x$, then $e_i\notin S$.
	
Now, assume that $|b_i|>0$ for all $1\le i\le m$. 
As above, write $b_i=e_ib'_i$. Note that, for any edge $f\in s^{-1}(v)\setminus S$, we have that $(p_A-\sum_{e \in S}s_es^*_e)s_f=s_f$. Therefore, for any $f\in s^{-1}(A)\setminus S$, we have that
\begin{align*}
xs_f&=(\sum_{i=1}^mk_is_{a_i}p_{A_i}s_{b_i}^*)(p_A-\sum_{e \in S}s_es^*_e)s_f=(\sum_{i=1}^mk_is_{a_i}p_{A_i}s_{b_i}^*)s_f=\\
&=\sum_{\{i\mid e_i=f\}}k_is_{a_i}p_{A_i}s_{b'_i}^*\in I.
\end{align*}
If the number of monomial terms in $xs_f$ is less than $m$, then $xs_f\in J$. If the number of monomial terms in $xs_f$ is  $m$ then, since $(|b_1'|,|b'_2|,\dots,|b'_n|)<(|b_1|,|b_2|,\dots,|b_n|)$, the minimal condition implies that $xs_f\in J$. In particular, for each $e_i$ which appears as the initial edge of some $b_i$ in the expression for $x$, we have that $xs_{e_i}s^*_{e_i}\in J$. But this in turn yields
\begin{align*}
x=\sum_{\{\text{distinct } e_j\mid 1\leq j\leq m\}}xs_{e_j}s^*_{e_j} \in J,
\end{align*}
a contradiction.
		
On the other hand, suppose that $|b_i|=0$ for some $1\leq i \leq m$. Without loss of generality, assume that $|b_1|=\cdots=|b_u|=0$ for some $1\leq u\le m$ and that $|b_i|>0$ for some $m\geq i\geq u+1$ (notice that we have already dealt with the case in which $|b_i|=0$ for all $i$ in the first part of this proof). Then,
\begin{align*}
x=(\sum_{i=1}^uk_is_{a_i}p_{A_i}+\sum_{j=u+1}^mk_js_{a_j}p_{A_j}s_{b_j}^*)(p_A-\sum_{e \in S}s_es^*_e).
\end{align*}
Let $T:=\{f\in\mg^1\mid s^*_{b_i}s_f\neq 0 \text{ for some } u+1\leq i\leq m\}$, that means, $T$ is the set of edges which appear as the initial edge of some path $b_i, u+1\leq i\leq n$. As indicated above, by minimality we may assume that $T\cap S = \emptyset$. Again using minimality, an argument analogous to one used previously yields that $xs_f \in J$ for all $f\in T$, and so $$\ds\sum_{f\in T}xs_fs^*_f\in J.$$ Write $b_i=e_{i}b'_i$ for each $u+1\leq i\leq m$. Then, for $f\in T$, we have that $s_{b_j}^*s_fs^*_f = 0$, unless $e_i=f$ in which case $s_{b_j}^*s_fs^*_f = s_{b_j}^*$. This yields that 
$s_{b_j}^*(p_A-\sum_{f\in T} s_fs^*_f)=s^*_{b_j}-s^*_{b_j}=0$ for all $ u+1\leq i\leq m$, and so
		\begin{align*}
			x-\sum_{f \in T}xs_fs^*_f=(\sum_{i=1}^uk_is_{a_i}p_{A_i})(p_A-\sum_{g\in S\cup T} s_gs^*_g)\in I.
		\end{align*}
		

Let $U=\{s(e): e \in S\cup T\}$. Since $U$ is finite, enumerate it, say $U=\{u_1, \ldots u_k\}$. For all $u\in U$, let $X_u=\{e\in S\cup T: s(e)=u\}$. 
Then, for each $u\in\{u_1,\ldots u_{k-1}\}$, we have that \begin{align*}
			(x-\sum_{f \in T}xs_fs^*_f)p_u=(\sum_{i=1}^uk_is_{a_i}p_{A_i})(p_u-\sum_{g\in X_u} s_gs^*_g)\in I,
		\end{align*}
and, by the claim proved in the first part of this proof, we have that $(x-\sum_{f \in T}xs_fs^*_f)p_u \in J$. Analogously, we obtain that \begin{align*}
			(x-\sum_{f \in T}xs_fs^*_f)p_{A\setminus \{u_1,\ldots,u_{k-1}\}}=(\sum_{i=1}^uk_is_{a_i}p_{A_i})(p_{A\setminus \{u_1,\ldots,u_{k-1}\}}-\sum_{g\in X_{u_k}} s_gs^*_g)\in I,
		\end{align*}
and, again by the claim proved in the first part of this proof, we have that $(x-\sum_{f \in T}xs_fs^*_f)p_{A\setminus \{u_1,\ldots,u_{k-1}\}} \in J$.
We conclude that
	\begin{align*}
			x-\sum_{f \in T}xs_fs^*_f=
			(x-\sum_{f \in T}xs_fs^*_f)p_{A\setminus \{u_1,\ldots,u_{k-1}\}}
			+ \sum_{u\in\{u_1,\ldots,u_{k-1}\}}(x-\sum_{f \in T}xs_fs^*_f)p_u
			\in J,
		\end{align*}
which gives $x\in J$, a contradiction, thus finishing the proof.
\end{proof}

\subsection{Applications}\label{sec:appl}

The structure of graded ideals of ultragraph Leavitt path algebras was described in \cite[Theorem 3.4]{ima:tlpaou}. We may invoke Theorem~\ref{generatingset} to obtain information about generating sets for graded ideals of ultragraph Leavitt path algebras. In particular, Theorem~\ref{generatingset} allows us to provide a  more direct proof of the key piece of \cite[Theorem 3.4]{ima:tlpaou}. Before doing so, we need to recall  some useful notions (see, e.g. \cite[Definition 3.1]{tomf:soua} and \cite[Definition 2.2]{ima:tlpaou}). 

Let $\mathcal{G}$ be an ultragraph. A subset $\mathcal{H}\subseteq\mathcal{G}^0$ is called \textit{hereditary} if the following conditions are satisfied:
\begin{itemize} 
	\item[(1)] if $e$ is an edge with $\{s(e)\} \in\mathcal{H}$, then $r(e)\in \mathcal{H}$;
	
	\item[(2)] $A\cup B \in \mathcal{H}$ for all $A, B \in \mathcal{H}$;
	
	\item[(3)] $A\in \mathcal{H}, B\in \mathcal{G}^0$ and $B\subseteq A$, imply that $B\in\mathcal{H}$.
\end{itemize}	
A subset $\mathcal{H}\subseteq\mathcal{G}^0$ is called \textit{saturated} if for any $v\in G^0$ with $0<|s^{-1}(v)|<\infty$, we have that
\begin{center}
	$\left\{r(e)\mid e\in \mathcal{G}^1 \text{ and } s(e)=v\right\}\subseteq \mathcal{H}$ implies $\left\{v\right\}\in\mathcal{H}$.	
\end{center} 

For a saturated hereditary subset $\mathcal{H}$ of $\mathcal{G}^0$, we define the {\it breaking vertices} of $\mathcal{H}$ to be the set
\[B_{\mathcal{H}}:=\{v\in G^0\mid |s^{-1}(v)|=\infty \text{ and } 0 < |s^{-1}(v) \cap \{e\mid r(e)\notin \mathcal{H}\}|<\infty\},\]
and for any $v\in B_{\mathcal{H}}$ we let \[p^{\mathcal{H}}_v := p_v-\ds\sum_{e\in s^{-1}(v),\ r(e)\notin \mathcal{H}}s_es^*_e.\]

An {\it admissible pair} of $\mg$ is a pair $(\mathcal{H}, S)$ consisting of a saturated hereditary subset $\mathcal{H}$ of $\mg^0$ and a subset $S\subseteq B_{\mathcal{H}}$.

Let $K$ be a field and $I$ be an ideal of $L_K(\mg)$. We let  $\mathcal{H}_I := \{A\in \mg^0\mid p_A \in I\}$, which is a saturated hereditary subset of $\mg^0$, and we let $S_I :=\{v\in B_{\mathcal{H}_I}\mid p^{\mathcal{H}_I}_v\in I\}$.


\begin{thm}\label{gi-generatingset}
Let $K$ be a field, $\mg$ an ultragraph, and $I$ an ideal of $L_K(\mg)$. Then, the following are equivalent:

$(1)$ $I$ is a graded ideal;

$(2)$ $I$ is generated by elements of the form $p_A -\ds\sum_{e\in S}s_es^*_e$, where $A\in \mg^{0}$ and $S$ is a finite subset of $\mathcal{G}^1$ consisting of edges with the same source vertex $v\in A$;

$(3)$ $I$ is generated by the subset  $$\{p_A\mid A\in \mathcal{H}_I\} \cup \{p^{\mathcal{H_I}}_v \mid v\in S_I\}.$$ 
\end{thm}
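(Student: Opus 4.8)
The plan is to establish the three equivalences in the cycle $(1) \Rightarrow (2) \Rightarrow (3) \Rightarrow (1)$, leveraging Theorem~\ref{generatingset} as the crucial input. The implication $(1) \Rightarrow (2)$ is where the main work lies: if $I$ is graded, I would apply Theorem~\ref{generatingset} to obtain a generating set consisting of elements of the form
\[
\left(p_A + \sum_{i=2}^m k_i s_c^{r_i}\right)\left(p_A - \sum_{e\in S}s_e s_e^*\right).
\]
The key observation is that each such generator is a sum of homogeneous components of distinct degrees: the factor $p_A - \sum_{e\in S}s_es_e^*$ is homogeneous of degree $0$, while multiplying by $s_c^{r_i}$ shifts the degree up by $|c|r_i > 0$. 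Since $I$ is graded, every homogeneous component of an element of $I$ again lies in $I$. In particular, the degree-zero component $p_A\bigl(p_A - \sum_{e\in S}s_es_e^*\bigr) = p_A - \sum_{e\in S}s_es_e^*$ lies in $I$. I would then argue that these degree-zero pieces, taken over all the generators from Theorem~\ref{generatingset}, already generate $I$: indeed, each original generator is recovered as $\bigl(p_A + \sum_i k_i s_c^{r_i}\bigr)$ times the degree-zero piece, so the ideal they generate contains all the original generators, hence equals $I$.

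For $(2) \Rightarrow (3)$, I would show that the specific generators $p_A - \sum_{e\in S}s_es_e^*$ from $(2)$ can all be expressed in terms of the canonical set $\{p_A \mid A\in\mathcal{H}_I\} \cup \{p_v^{\mathcal{H}_I} \mid v\in S_I\}$, and conversely that this canonical set lies in the ideal generated by the $(2)$-generators. Given a generator $p_A - \sum_{e\in S}s_es_e^*$ with all edges in $S$ emitted from a single $v\in A$, I would analyze two cases according to whether $v$ is a regular vertex or an infinite emitter. If $v$ is regular and $S = s^{-1}(v)$, the Cuntz--Krieger relation (4) forces this element to be of the form involving $p_A$ with $\{v\}\in\mathcal{H}_I$; if $S$ is a proper finite subset with $v$ an infinite emitter, the element is related to a breaking-vertex idempotent $p_v^{\mathcal{H}_I}$. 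The bookkeeping here — matching each generator to the right combination of $p_A$'s for $A\in\mathcal{H}_I$ and $p_v^{\mathcal{H}_I}$ for $v\in S_I$, and verifying the defining membership $p_A\in I \Leftrightarrow A\in\mathcal{H}_I$ and $p_v^{\mathcal{H}_I}\in I \Leftrightarrow v\in S_I$ — is the main obstacle, since one must carefully track which vertices are sinks, regular, infinite emitters, or breaking vertices, and ultragraph range sets complicate the saturation arguments compared to the graph case.

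Finally, $(3) \Rightarrow (1)$ is the most direct: I would simply note that every element of the proposed generating set is homogeneous of degree $0$, since both $p_A$ and $p_v^{\mathcal{H}_I} = p_v - \sum_{e\in s^{-1}(v),\, r(e)\notin\mathcal{H}_I} s_es_e^*$ are $\mathbb{Z}$-homogeneous of degree $0$ in the grading of Lemma~\ref{graded}. An ideal generated by homogeneous elements is graded, which closes the cycle. I expect the genuine difficulty to be concentrated in $(2)\Rightarrow(3)$, specifically in proving that the saturated hereditary closure generated by the $A$'s appearing in the generators coincides with $\mathcal{H}_I$ and that the breaking vertices captured are exactly $S_I$; here one may need to invoke the structure of $L_K(\mathcal{G})$ from Lemma~\ref{graded} to verify that no $p_A$ with $A\notin\mathcal{H}_I$ is inadvertently forced into $I$, and the ultragraph-specific behavior of generalized vertices in $\mathcal{G}^0$ under relative complements demands care beyond the graph setting.
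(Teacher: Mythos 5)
Your cycle $(1)\Rightarrow(2)\Rightarrow(3)\Rightarrow(1)$ is exactly the paper's, and your arguments for $(1)\Rightarrow(2)$ (extract the degree-zero component of each generator from Theorem~\ref{generatingset} and observe it regenerates the original generator) and for $(3)\Rightarrow(1)$ (the canonical generators are homogeneous of degree $0$) match the paper's proof and are complete. The problem is $(2)\Rightarrow(3)$, which you correctly identify as the main obstacle but then leave as an obstacle rather than an argument. Your two-case split --- ``$v$ regular and $S=s^{-1}(v)$'' versus ``$v$ an infinite emitter and $S$ proper'' --- does not cover the actual situations that arise: a generator $y=p_A-\sum_{e\in S}s_es_e^*$ may have $v$ regular with $S\subsetneq s^{-1}(v)$, and $S$ may mix edges whose ranges lie in $\mathcal{H}_I$ with edges whose ranges do not. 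The paper's argument handles this by fixing the ideal $J$ generated by $\{p_A\mid A\in\mathcal{H}_I\}$, assuming $y\notin J$, and then establishing in order: (a) $p_{A\setminus\{v\}}=p_{A\setminus\{v\}}\,y\in I$, so $A\setminus\{v\}\in\mathcal{H}_I$ while $A,\{v\}\notin\mathcal{H}_I$; (b) setting $S'=\{e\in S\mid r(e)\in\mathcal{H}_I\}$, one has $s_es_e^*\in J$ for $e\in S'$ and $S'\subsetneq S$; (c) every edge $f$ with $s(f)=v$ and $r(f)\notin\mathcal{H}_I$ must belong to $S$, since otherwise $s_fs_f^*=y-(p_A-s_fs_f^*)y\in I$ forces $p_{r(f)}\in I$, a contradiction --- this gives $0<|s^{-1}(v)\cap\{e\mid r(e)\notin\mathcal{H}_I\}|<\infty$; (d) if $v$ were regular, relation (4) of Definition~\ref{utraLevittpathalg} rewrites $y+\sum_{e\in S'}s_es_e^*$ as $p_{A\setminus\{v\}}+\sum_{e\in s^{-1}(v),\,r(e)\in\mathcal{H}_I}s_es_e^*\in J$, a contradiction, so $v$ is an infinite emitter and hence $v\in B_{\mathcal{H}_I}$. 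Only then does the final identity
\[
y=p_{A\setminus\{v\}}+p_v^{\mathcal{H}_I}-\sum_{e\in s^{-1}(v),\ r(e)\in\mathcal{H}_I}s_es_e^*
\]
exhibit $y$ in the ideal generated by the canonical set (with $v\in S_I$ because $p_v^{\mathcal{H}_I}=y-p_{A\setminus\{v\}}+\sum_{e\in s^{-1}(v),\,r(e)\in\mathcal{H}_I}s_es_e^*\in I$). None of steps (a)--(d) appears in your proposal, and without them the claim that each type-(2) generator ``is related to a breaking-vertex idempotent'' is an assertion, not a proof. The reverse containment you mention is, by contrast, immediate from the definitions of $\mathcal{H}_I$ and $S_I$ and needs no work.
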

\begin{proof}
(1)$\Longrightarrow$(2). By Theorem \ref{generatingset}, $I$ is generated as an ideal by elements in $I$ of the form	
\[x= (p_A-\sum_{i=2}^mk_is_c^{r_i})(p_A-\sum_{e \in S}s_es^*_e)=(p_A-\sum_{e \in S}s_es^*_e) -\sum_{i=2}^{m}k_is_c^{r_i}(p_A-\sum_{e \in S}s_es^*_e),\]	
where $A\in \mathcal{G}^0$, $k_2, \hdots, k_m\in K$, $r_1, \hdots, r_m$ are positive integers, $S$ is a finite subset of $\mathcal{G}^1$ consisting of edges with the same source vertex $v\in A$, and, whenever $k_i\neq 0$ for some $2\le i\le m$, $c$ is the unique cycle based at $v$ such that $A\subseteq r(c)$. Since $I$ is graded, each of the graded components of $x$ is in $I$. Since $\deg(p_A-\sum_{e \in S}s_es^*_e) = 0$, we have that the degree $0$ component of $x$ is $p_A-\sum_{e \in S}s_es^*_e$, while the degree $|c|r_i$ component of $x$ for $r_i\ge 1$ is $k_i s^{r_i}_c(p_A-\sum_{e \in S}s_es^*_e)$. This shows that
$x$ belongs to the ideal generated by elements in $I$ of the form $p_A-\sum_{e \in S}s_es^*_e$, as desired.


(2)$\Longrightarrow$(3). Consider a generator $y=p_A-\sum_{e\in S}s_es^*_e\in I$, where $A\in \mg^{0}$ and $S$ is a finite subset of $\mathcal{G}^1$ consisting of edges with the same source vertex $v\in A$. 

If $S=\emptyset$, then $y=p_A\in I$, and so $A\in \mathcal{H}_I$.

Let $J$ be the ideal of $L_K(\mg)$ generated by $\{p_A\mid A\in \mathcal{H}_I\}$. Assume that $y$ is not in $J$. We show next that, in this case, $v$ is a breaking vertex for $\mathcal{H}_I$.

Notice that, 
$$p_{A\setminus\{v\}} = p_{A\setminus\{v\}} (p_A-\sum_{e\in S}s_es^*_e) =  p_{A\setminus\{v\}} y\in I,$$ which shows that $A\setminus\{v\}\in \mathcal{H}_I$, and hence $p_{A\setminus\{v\}} \in J$.
Furthermore, we note that $A\notin \mathcal{H}_I$, since otherwise $p_A\in J$, and so $y = p_A(p_A-\sum_{e\in S}s_es^*_e) \in J$, which contradicts our hypothesis.

From the above we obtain that $\{v\}\notin \mathcal{H}_I$, since otherwise, if $\{v\}\in \mathcal{H}_I$, then $p_v\in I$ and hence $p_A = p_v + p_{A\setminus\{v\}} \in I$, that is, $A\in \mathcal{H}_I$, a contradiction.

Let $S'=\{e\in S\mid r(e)\in \mathcal{H}_I\}$. Notice that, for any $e$ such that $r(e)\in \mathcal{H}_I$, we have that  $s_e=s_ep_{r(e)}\in J$, and so $s_es^*_e\in J$. 
Hence, if $S'=S$ then $\sum_{e \in S}s_es^*_e\in J$ and $p_A=y +\sum_{e \in S}s_es^*_e\in I$, and so $A\in \mathcal{H}_I$, a contradiction. So, $S'\subsetneq S$. Moreover, if there exists an edge $f\in \mg^1$ with $s(f)=v$ and $r(f)\notin \mathcal{H}_I$, then $f$ must belong to $S$, because otherwise 	
\[z:=(p_A - s_fs^*_f)(p_A-\sum_{e \in S}s_es^*_e) =p_A-\sum_{e \in S}s_es^*_e-s_fs^*_f\in I,\]
which shows that $s_fs^*_f = y -z\in I$. Then, $p_{r(f)}=s_f^*(s_fs^*_f)s_f\in I$, and so $r(f)\in \mathcal{H}_I$, a contradiction. Thus, we obtain that
$0<|s^{-1}(v)\cap\{e\mid r(e)\notin \mathcal{H}_I\}|<\infty$.

Suppose that $s^{-1}(v)$ is finite. Let $y'=y + \sum_{e\in S'} s_es_e^*= p_A - \sum_{e\in S: r(e)\notin \mathcal{H}_I} s_es_e^* $. Then $y'\notin J$, since otherwise $y = y'-\sum_{e\in S'} s_es_e^*$ belongs to $J$. Then,  
\[y'= p_{A\setminus\{v\}}+p_v-\sum_{e\in S: r(e)\notin \mathcal{H}_I} s_es_e^*=p_{A\setminus\{v\}}+\sum_{e\in s^{-1}(v),\ r(e)\in \mathcal{H}_I}s_es^*_e\in J,\]  a contradiction. We conclude that $v$ is an infinite emitter and hence it is a breaking vertex for $\mathcal{H}_I$, as desired.

Finally, notice that we can write
\[y=p_{A\setminus\{v\}}+p_v^{\mathcal{H}_I}- \sum_{e\in s^{-1}(v),\ r(e)\in \mathcal{H}_I} s_es^*_e,\] and, since both $p_{A\setminus\{v\}}$ and $\sum_{e\in s^{-1}(v),\ r(e)\in \mathcal{H}_I} s_es^*_e$ belong to $J$, $(3)$ is proved.


(3)$\Longrightarrow$(1). This is straightforward since any ideal generated by homogeneous elements is graded.
\end{proof}

Consequently, we have the following.

\begin{cor}\label{gi-cor}
Let $\mg$ be an ultragraph and $K$ a field. Then, the following statements are true:

$(1)$ $I^2=I$ for all  graded ideal $I$ of $L_{K}(\mg)$;

$(2)$ For every nonzero graded ideal $I$ of $L_{K}(\mg)$, there exists a vertex $v \in G^{0}$ such that $p_{v}\in I$;

$(3)$ If every cycle in $\mg$ has an exit, then for every nonzero ideal $I$ of $L_{K}(\mg)$, there exists a vertex $v \in G^{0}$ such that $p_{v}\in I$.
\end{cor}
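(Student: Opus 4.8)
All three parts are read off from the generating sets in Theorems~\ref{generatingset} and~\ref{gi-generatingset}. For $(1)$, the plan is to use Theorem~\ref{gi-generatingset}$(2)$: a graded ideal $I$ is generated as an ideal by elements $d=p_A-\sum_{e\in S}s_es_e^*$ with $S$ finite and $s(e)=v\in A$ for every $e\in S$. Each such $d$ is idempotent --- using $p_As_e=s_e$, $s_e^*s_f=\delta_{e,f}p_{r(e)}$ and $p_A^2=p_A$ one checks $d^2=d$ --- so $d=d\cdot d\in I^2$. Since $I^2$ is an ideal containing a generating set of $I$, this gives $I\subseteq I^2$, and as $I^2\subseteq I$ always, $I=I^2$. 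This part is routine.

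For $(2)$ I would use the generating set $\{p_A\mid A\in\mathcal H_I\}\cup\{p^{\mathcal H_I}_v\mid v\in S_I\}$ from Theorem~\ref{gi-generatingset}$(3)$, and show that $\mathcal H_I$ must contain a nonempty set. Indeed, if $\mathcal H_I=\{\varnothing\}$ then $\{e\mid r(e)\notin\mathcal H_I\}=\mathcal G^1$, so the defining condition of a breaking vertex, $0<|s^{-1}(v)\cap\{e\mid r(e)\notin\mathcal H_I\}|<\infty$, would make $v$ regular, contradicting $|s^{-1}(v)|=\infty$; hence $B_{\mathcal H_I}=\varnothing$, $S_I=\varnothing$, and $I=0$, a contradiction. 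So $p_A\in I$ for some $A\neq\varnothing$, and choosing any $w\in A$, the heredity of $\mathcal H_I$ yields $\{w\}\in\mathcal H_I$, that is $p_w\in I$.

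For $(3)$, by Theorem~\ref{generatingset} I would fix a nonzero generator $g=(p_A+\sum_{i=2}^mk_is_c^{r_i})d$, with $d=p_A-\sum_{e\in S}s_es_e^*$ and $c$ the unique cycle based at $v\in A$ with $A\subseteq r(c)$. The aim is to manufacture a nonzero homogeneous element of $I$, whose generated ideal is then a nonzero graded ideal inside $I$, so that $(2)$ applies. If every $k_i=0$, then $g=d$ is already such an element. If some $k_i\neq0$, the cycle $c$ exists and, by hypothesis, has an exit, and I would split on whether its initial edge $c_1$ lies in $S$. If $c_1\in S$, then $ds_c^{r_i}=0$ for each $i$ (the summand $s_{c_1}s_{c_1}^*$ absorbs the cycle direction), whence $dgd=d$, so $d=dgd\in I$ and $d\neq0$ because $g=(\cdots)d\neq0$. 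If $c_1\notin S$, I would compress along a path that runs through the cycle and then exits: for an exit edge $e$ at position $i$ (so $s(e)\in r(c_i)$, $e\neq c_{i+1}$) the two-sided compression $s_\lambda^*gs_\lambda$ with $\lambda=c_1\cdots c_ie$ equals $p_{r(e)}\neq0$, while for an exit into a sink $w\in r(c_i)$ the compression $p_ws_\mu^*gs_\mu p_w$ with $\mu=c_1\cdots c_i$ equals $p_w\neq0$. In each case one obtains a nonzero homogeneous element of $I$ --- the idempotent $d$, or a projection $p_{r(e)}$ or $p_w$ --- and $(2)$, together with the heredity of $\mathcal H_I$, then produces a vertex $p_w\in I$.

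The hard part is the cycle term in $(3)$, and the device that makes it work is the \emph{two-sided} compression. A one-sided product $s_\lambda^*g$ leaves, when the exit sits at the terminal position $i=|c|$, a spurious summand $k_{i_0}s_e^*$ coming from the degree-one term $s_c$, and it is only after multiplying back by $s_\lambda$ (using $ds_\lambda=s_\lambda$ when $c_1\notin S$, and $s_e^*s_\lambda=0$ since $e\neq c_1$) that this summand cancels and one lands exactly on $p_{r(e)}$. Getting this cancellation right for every exit position --- including loops, where $|c|=1$ --- together with the $c_1\in S$ versus $c_1\notin S$ dichotomy and the verification that the compressed elements are nonzero, is where the bookkeeping must be done with care; the reductions to $(2)$ are then immediate.
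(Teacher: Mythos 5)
Your proposal is correct and follows essentially the same route as the paper: part (1) from the idempotency of the generators supplied by Theorem~\ref{gi-generatingset}, part (2) by forcing a nonempty element of $\mathcal{H}_I$ from the generating set of Theorem~\ref{gi-generatingset}(3), and part (3) by compressing a generator of Theorem~\ref{generatingset} along a path that runs into the cycle and out an exit. The differences are only tactical --- the paper splits on $\{v\}\subsetneq A$ versus $\{v\}=A$ and compresses in two stages (first by a single edge $f\in s^{-1}(v)\setminus S$, then along the rotated cycle to its exit), while you split on whether $c_1\in S$ and compress in one step by $s_\lambda^*(\cdot)s_\lambda$, landing either on a vertex projection or on the homogeneous idempotent $d$ to which (2) applies --- and your computations, including the cancellation of the residual $s_e^*$ term at the terminal exit position, check out.
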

\begin{proof} Let $I$ be a graded ideal of $L_{K}(\mg)$. By Theorem \ref{gi-generatingset}, $I$ is generated by the subset  $\{p_A\mid A\in \mathcal{H}\} \cup \{p_v-\sum_{e\in s^{-1}(v),\ r(e)\notin \mathcal{H}}s_es^*_e\mid v\in B_{\mathcal{H}}\},$ where $\mathcal{H} := \{A\in \mg^0\mid p_A \in I\}$. Then, (1) follows from the fact that both $p_A$ and $p_v-\sum_{e\in s^{-1}(v),\ r(e)\notin \mathcal{H}}s_es^*_e$ are idempotent in $L_K(\mg)$.	

If $I$ is a nonzero graded ideal of $L_{K}(\mg)$, then $I$ contains either $p_A$ for some $A\in \mathcal{G}^0\setminus \{\emptyset\}$, or $x:= p_v-\sum_{e\in s^{-1}(v),\ r(e)\notin \mathcal{H}}s_es^*_e$ for some $v\in B_{\mathcal{H}}$. In the first case, we have $p_v = p_v p_A \in I$ for all $v\in A$, and we are done. In the latter case,  since $v$ is an infinite emitter, there exists an edge $f\in s^{-1}(v)$ such that $r(f)\in \mathcal{H}$. Then, \[s^*_fxs_f=s^*_fp_vs_f-\sum_{e \in s^{-1}(v), \ r(e)\notin \mathcal{H}} s^*_fs_es^*_es_f=p_{r(f)}\in I.\]
So, $p_w \in I$ for all $w\in r(f)$, showing (2). We prove (3) below.

Assume that every cycle in $\mg$ has an exit and $I$ is a nonzero ideal of $L_K(\mg)$. By Theorem \ref{gi-cor}, $I$ contains a nonzero element of the form 
		$$x:=(p_A+\ds\sum k_is_c^{r_i})(p_A-\sum_{e \in S}s_es^*_e),$$
where $A\in \mathcal{G}^0$, $k_2, \hdots, k_m\in K$, $r_1, \hdots, r_m$ are positive integers, $S$ is a finite subset of $\mathcal{G}^1$ consisting of edges with the same source vertex $v\in A$, and, whenever $k_i\neq 0$ for some $2\le i\le m$, $c$ is the unique cycle based at $v$ such that $A\subseteq r(c)$. Consider the following two cases:
	
{\it Case 1:} $\{v\} \varsubsetneq A$. We then have that $A\setminus \{v\}\in \mathcal{G}^0\setminus \{\emptyset\}$ and
	$$p_{A\setminus \{v\}}=p_{A\setminus \{v\}}(p_A+\sum k_is_c^{r_i})(p_A-\ds\sum_{e \in S}s_es^*_e) =  p_{A\setminus \{v\}}x\in I.$$ So, $p_w=p_w p_{A\setminus \{v\}}\in I$ for all $w\in A\setminus \{v\}$, as desired.
		
{\it Case 2:} $\{v\} = A$. If $v$ is a regular vertex, then we have that
$$x= (p_v-\sum k_is_c^{r_i})(p_v-\sum_{e \in S}s_es^*_e)=(p_v-\sum k_is_c^{r_i})(\sum_{e \in s^{-1}(v)\setminus S}s_es^*_e)\neq 0.$$
So, $s^{-1}(v)\setminus S\neq \emptyset$ and hence there exists an edge $f\in s^{-1}(v)\setminus S$. If $v$ is an infinite emitter, then it is obvious that  there exists an edge $f\in s^{-1}(v)\setminus S$. Therefore, in any case, there exists an edge $f\in s^{-1}(v)\setminus S$. Then,
\begin{align*}
s^*_fxs_f&=s^*_f(p_v+\sum k_is_c^{r_i})(p_v -\sum_{e \in S}s_es^*_e)s_f =s^*_f(p_v+\sum k_is_c^{r_i})s_f\\&=p_{r(f)}+\sum k_is_f^*s_c^{r_i}s_f\in I.\end{align*}
Write $c = e_1 \cdots e_n$.	If $f\neq e_1$, then we have $s^*_fs^{r_i}_cs_f=0$, and so $p_{r(f)}\in I$. Hence $p_u= p_u r(f) \in I$ for all $u\in r(f)$, as desired.
		
Consider the case when $f=e_1$. We then have that $$s^*_fxs_f =p_{r(e_1)}+\ds\sum k_is^{r_i}_g \in I,$$ where $g=g_1g_2\cdots g_n$ with $g_1=e_2,\ g_2 = e_3,\dots, g_{n-1}=e_n, g_n=e_1$. Let $y:= p_{r(e_1)}+\ds\sum k_is_g^{r_i}\in I$. By our hypothesis, $g$ has an exit, that is, there is either an edge $e\in \mathcal{G}^1$ such that there exists an $1\le i\le n$ for which $s(e)\in r(g_i)$ but $e\ne g_{i+1}$, or a sink $w$ such that $w\in r(g_i)$ for some $1\le i\le n$. 

In the first case, for $h:= g_1 \cdots g_ie$, we have that
$$s^*_{h}ys_{h}=s^*_{h}p_{r(e_1)}s_h=p_{r(h)}\in I.$$
So, $p_w = p_w p_{r(e_1)} \in I$ for all $w\in r(h)$, as desired. In the second case, for $h:= g_1 \cdots g_iw$, we have that $s^*_{h}ys_{h}=p_w\in I$, as desired.

In any case, we obtain that $p_v \in I$ for some $v\in G^0$, thus the proof is finished.
\end{proof}

Before we proceed, we provide below a class of ultragraph Leavitt path algebras that can not be realized as the Leavitt path algebra of a graph. 

\begin{prop}\label{ExULPaNLPA}
Let $\mathcal G$ be an ultragraph such that $L_K(\mathcal G)$ is unital and such that there is an infinite number of hereditary saturated subsets in $\mathcal G^0$. Then, $L_K(\mathcal G)$ is not isomorphic to any graph Leavitt path algebra.
\end{prop}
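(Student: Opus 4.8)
The plan is to argue by contradiction, exhibiting a purely algebraic isomorphism invariant that is infinite for $L_K(\mathcal G)$ but finite for every unital graph Leavitt path algebra. The invariant I would use is the cardinality of the set of \emph{idempotent ideals}, i.e.\ two-sided ideals $I$ with $I=I^2$. This quantity is manifestly preserved by any $K$-algebra isomorphism, since an isomorphism carries idempotent ideals to idempotent ideals; crucially, it makes no reference to the grading, so it can be transported across a merely algebraic isomorphism.

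First I would show that $L_K(\mathcal G)$ has infinitely many idempotent ideals. For each saturated hereditary subset $\mathcal H\subseteq\mathcal G^0$, form the ideal $I_{\mathcal H}$ generated by $\{p_A\mid A\in\mathcal H\}$. By Theorem~\ref{gi-generatingset} this is a graded ideal, and distinct $\mathcal H$ yield distinct $I_{\mathcal H}$, because the lattice isomorphism between admissible pairs and graded ideals in \cite[Theorem 3.4]{ima:tlpaou} recovers $\mathcal H$ as $\mathcal H_{I_{\mathcal H}}$. By Corollary~\ref{gi-cor}(1) each graded ideal is idempotent, so each $I_{\mathcal H}$ satisfies $I_{\mathcal H}=I_{\mathcal H}^2$. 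Since by hypothesis there are infinitely many saturated hereditary subsets of $\mathcal G^0$, the algebra $L_K(\mathcal G)$ has infinitely many idempotent ideals.

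Next, supposing $L_K(\mathcal G)\cong L_K(E)$ for some graph $E$, I would show that $L_K(E)$ has only finitely many idempotent ideals, which contradicts the previous step. Since $L_K(\mathcal G)$ is unital, so is $L_K(E)$, and a graph Leavitt path algebra is unital exactly when $E^0$ is finite; hence $E^0$ is finite. Consequently there are only finitely many hereditary saturated subsets of $E^0$ and only finitely many breaking vertices, so there are finitely many admissible pairs, and therefore $L_K(E)$ has finitely many graded ideals. To upgrade this to a bound on \emph{idempotent} ideals I would invoke the structure theory of ideals of graph Leavitt path algebras \cite{AAS}: every ideal is determined by a graded part (an admissible pair) together with polynomial data attached to the cycles without exits in the associated quotient, and this data lives in Laurent-polynomial factors $K[x,x^{-1}]$; an ideal is graded precisely when the polynomial data is trivial. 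Since the only idempotent ideals of $K[x,x^{-1}]$ are $0$ and the whole ring, the condition $I=I^2$ forces all polynomial data to be trivial. Thus for a graph Leavitt path algebra the idempotent ideals coincide with the graded ideals, and $L_K(E)$ has only finitely many idempotent ideals, the desired contradiction.

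The main obstacle is exactly this identification of idempotent ideals with graded ideals on the graph side: it is what converts a grading-dependent counting argument into one that survives a merely algebraic isomorphism. The one direction needed for $L_K(\mathcal G)$ (graded $\Rightarrow$ idempotent) is already supplied by Corollary~\ref{gi-cor}(1), while the reverse implication for graphs rests on the $K[x,x^{-1}]$ observation above; everything else reduces to the bookkeeping of admissible pairs for a finite graph and to results established earlier in this section.
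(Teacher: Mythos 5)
Your proposal is correct, and at its core it follows the same route as the paper: assume $L_K(\mathcal G)\cong L_K(E)$, use unitality to force $E^0$ finite, and then compare the (finite) supply of graded ideals of $L_K(E)$ coming from \cite[Theorem 3.4]{ima:tlpaou} with the infinite supply of graded ideals $I_{\mathcal H}$ of $L_K(\mathcal G)$ produced by the infinitely many hereditary saturated subsets. The genuine difference is that you interpose the invariant ``number of idempotent ideals'' between the two counts. The paper's proof simply contrasts the number of graded ideals on each side, which tacitly treats that number as an invariant under a $K$-algebra isomorphism that is not assumed to be graded; your version repairs this by noting that graded ideals are idempotent (Corollary~\ref{gi-cor}(1), so the infinite family on the ultragraph side survives transport), and that for a graph Leavitt path algebra the idempotent ideals are exactly the graded ones (graded ideals are generated by idempotents, and a non-graded ideal contributes nontrivial polynomial data in a $K[x,x^{-1}]$ corner, whose proper nonzero ideals are never idempotent). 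That last identification is indeed available in the graph literature (e.g.\ \cite{AAS}), so your argument is complete; what it buys over the paper's phrasing is that the counting argument becomes honestly grading-independent, at the cost of invoking one extra structural fact about ideals of graph Leavitt path algebras.
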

\begin{proof}
Suppose that $L_K(\mathcal G)$ is isomorphic to $L_K(E)$, where $E$ is a graph. Since $L_K(\mathcal G)$ is unital, $L_K(E)$ must also be unital, and hence the graph $E$ has finitely many vertices. Therefore, by \cite[Theorem3.4]{ima:tlpaou}, $L_K(E)$ has only a finite number of graded ideals. On the other hand, again by \cite[Theorem3.4]{ima:tlpaou}, $L_K(\mathcal G)$ has an infinite number of graded ideals, a contradiction.  
\end{proof}

\begin{rem} The example given in \cite[Example~5.11]{ima:tlpaou} of an ultragraph Leavitt path algebra that can not be realized as a Leavitt path algebra of a graph was over the base field $\mathbb Z_2$. Our result above shows that this conclusion is true regardless of the base field (and of course allows the construction of many other examples).
\end{rem}

The graded-uniqueness theorem of ultragraph Leavitt path algebras was established in \cite[Theorem 2.14]{ima:tlpaou} and \cite[Theorem 5.4]{dgv:uavlggwatgut} in terms of two different approaches. We may recover this theorem using Corollary \ref{gi-cor}.

\begin{thm}[{cf. \cite[Theorem 2.14]{ima:tlpaou} and \cite[Theorem 5.4]{dgv:uavlggwatgut}}]\label{gutheo}
Let $\mg$ be an ultragraph, $K$ a field, $A$ a $\bb{Z}$-graded $K$-algebra and $\vp:L_K(\mg)\to A$  a $\bb{Z}$-graded homomorphism. Then, $\vp$ is injective if and only if $\vp(p_v)\neq 0$ for all $v\in G^0$.
\end{thm}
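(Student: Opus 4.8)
The plan is to pass to the kernel of $\varphi$ and exploit the fact that it is a graded ideal, thereby reducing the entire statement to Corollary~\ref{gi-cor}(2). Since $\varphi$ is a $\mathbb{Z}$-graded homomorphism, its kernel $I := \ker\varphi$ is a graded ideal of $L_K(\mg)$: if $x = \sum_n x_n$ with $x_n\in L_K(\mg)_n$ lies in $I$, then $0 = \varphi(x) = \sum_n \varphi(x_n)$ is a decomposition into homogeneous components of $A$, forcing $\varphi(x_n)=0$ and hence $x_n\in I$ for each $n$. Thus $\varphi$ is injective exactly when this graded ideal $I$ is trivial.

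For the forward (only if) direction, I would simply observe that for each $v\in G^0$ the singleton $\{v\}$ belongs to $\mathcal{G}^0\setminus\{\varnothing\}$, so $p_v = p_{\{v\}}\neq 0$ by Lemma~\ref{graded}(1); injectivity of $\varphi$ then yields $\varphi(p_v)\neq 0$.

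For the converse, I would argue by contradiction. Suppose $\varphi(p_v)\neq 0$ for all $v\in G^0$ but $I=\ker\varphi\neq 0$. Since $I$ is a nonzero graded ideal, Corollary~\ref{gi-cor}(2) produces a vertex $v\in G^0$ with $p_v\in I$, whence $\varphi(p_v)=0$, contradicting the hypothesis. Therefore $I=0$ and $\varphi$ is injective.

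The genuine content of the theorem is entirely absorbed into Corollary~\ref{gi-cor}(2) (and thus into Theorems~\ref{generatingset} and \ref{gi-generatingset}), so with those in hand there is no real obstacle: the only points requiring care are checking that the kernel is graded and that $p_v\neq 0$, both of which are immediate. In effect, the crux of the argument — that every nonzero graded ideal must swallow some vertex idempotent $p_v$ — is precisely the combinatorial payoff of the generating-set description established earlier in the section, and the graded-uniqueness theorem is its clean corollary.
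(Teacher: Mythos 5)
Your argument is correct and is essentially identical to the paper's own proof: both directions reduce to Lemma~\ref{graded}(1) for the forward implication and to the fact that $\ker\varphi$ is a graded ideal together with Corollary~\ref{gi-cor}(2) for the converse. The only difference is that you spell out the (standard) verification that the kernel of a graded homomorphism is a graded ideal, which the paper simply asserts.
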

\begin{proof}
($\Longrightarrow$) Assume that $\vp$ is injective. By Lemma \ref{graded} (1), we have $p_v\neq 0$ for all $v\in G^0$,  and so $\vp(p_v)\neq 0$ for all $v\in G^0$.

($\Longleftarrow$) Assume that $\vp(p_v)\neq 0$ for all $v\in G^0$, and $\ker(\vp)\neq \{0\}$. Since $\vp$ is a $\bb{Z}$-graded homomorphism, we have that $\ker(\vp)$ is a $\bb{Z}$-graded ideal of $L_K(\mg)$. By Corollary~ \ref{gi-cor} (2), $\ker(\vp)$ contains an element $p_v$, for some $v\in G^0$. This implies that $\vp(p_v)=0$, a contradiction. Thus, $\ker(\vp)= \{0\}$ and $\vp$ is injective, as desired.	
\end{proof}

The Cuntz-Krieger uniqueness theorem of ultragraph Leavitt path algebras was established in \cite[Theorem 2.17]{ima:tlpaou}. We may recover this theorem using Theorem~\ref{generatingset}.

\begin{thm}[{cf. \cite[Theorem 2.17]{ima:tlpaou}}]\label{cgutheo}
Let $\mg$ be an ultragraph in which every cycle in $\mg$ has an exit, $K$ a field, $A$ a $K$-algebra, and $\vp:L_K(\mg)\to A$ a $K$-algebra homomorphism. Then, $\vp$ is injective if and only if $\vp(p_v)\neq 0$ for all $v\in G^0$.
\end{thm}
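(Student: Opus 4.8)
The plan is to follow exactly the template of the graded-uniqueness theorem (Theorem~\ref{gutheo}), but to replace its use of Corollary~\ref{gi-cor}(2) with Corollary~\ref{gi-cor}(3). The point is that the hypothesis ``every cycle has an exit'' is precisely what allows one to drop the gradedness assumption on $\vp$: whereas the graded case only produces a vertex idempotent inside a \emph{graded} ideal, part (3) of Corollary~\ref{gi-cor} produces one inside an \emph{arbitrary} nonzero ideal under this cycle-exit condition.

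For the forward implication, I would argue exactly as in Theorem~\ref{gutheo}: if $\vp$ is injective, then since Lemma~\ref{graded}(1) guarantees $p_v\neq 0$ for all $v\in G^0$, injectivity forces $\vp(p_v)\neq 0$ for all $v\in G^0$. This direction is immediate and requires no hypothesis on cycles.

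For the reverse implication, I would assume $\vp(p_v)\neq 0$ for all $v\in G^0$ and aim to show $\ker(\vp)=\{0\}$. Suppose instead that $\ker(\vp)$ is a nonzero ideal of $L_K(\mg)$. Since every cycle in $\mg$ has an exit, Corollary~\ref{gi-cor}(3) applies directly to $I=\ker(\vp)$ and yields a vertex $v\in G^0$ with $p_v\in\ker(\vp)$. But then $\vp(p_v)=0$, contradicting the standing assumption. Hence $\ker(\vp)=\{0\}$ and $\vp$ is injective.

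The only genuine work here has already been done upstream: the real content lives in Corollary~\ref{gi-cor}(3), which itself rests on the generating-set description of Theorem~\ref{generatingset}. So I do not expect any obstacle in assembling the present proof; the one subtlety worth flagging is that, unlike in Theorem~\ref{gutheo}, I must \emph{not} invoke gradedness of $\ker(\vp)$ (indeed $\vp$ is only assumed to be a $K$-algebra homomorphism, so $\ker(\vp)$ need not be graded), and it is exactly for this reason that the cycle-exit hypothesis is essential in order to reach a vertex idempotent via Corollary~\ref{gi-cor}(3) rather than Corollary~\ref{gi-cor}(2).
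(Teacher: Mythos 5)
Your proposal is correct and follows essentially the same argument as the paper: the forward direction via Lemma~\ref{graded}(1), and the reverse direction by applying Corollary~\ref{gi-cor}(3) to the nonzero ideal $\ker(\vp)$ to produce a vertex idempotent, yielding a contradiction. Your additional remark on why gradedness of $\ker(\vp)$ cannot be invoked here is accurate but not part of the paper's (equally brief) proof.
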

\begin{proof}
($\Longrightarrow$) Assume that $\vp$ is injective. By Lemma \ref{graded} (1), we have $p_v\neq 0$ for all $v\in G^0$,  and so $\vp(p_v)\neq 0$ for all $v\in G^0$.

($\Longleftarrow$) Assume that $\vp(p_v)\neq 0$ for all $v\in G^0$, and that $\ker(\vp)\neq 0$. By Corollary~ \ref{gi-cor} (3), $\ker(\vp)$ contains an element $p_v$, for some $v\in G^0$. This implies that $\vp(p_v)=0$, a contradiction. Thus, $\ker(\vp)= \{0\}$ and $\vp$ is injective, as desired.	
\end{proof}

Recall that a ring $R$ is said to be {\it semiprime} if, for every ideal $I$ of $R$, $I^2 = 0$ implies $I = 0$. In \cite[Corollary 3.3]{gon:ratrt19}, as an application of the reduction theorem, Royer and the second author showed that ultragraph Leavitt path algebras are semiprime. Next, we recover this result using Theorem \ref{gi-generatingset}.

\begin{thm}[{cf. \cite[Corollary 3.3]{gon:ratrt19}}]\label{semiprime}
Let $\mg$ be an ultragraph and $K$ a field. Then the ultragraph Leavitt path algebra $L_K(\mg)$ is semiprime.	
\end{thm}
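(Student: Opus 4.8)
The plan is to prove semiprimeness by reducing to the behavior of \emph{graded} ideals, where we have the strong structural information supplied by Theorem~\ref{gi-generatingset} and, more directly, by Corollary~\ref{gi-cor}(1). Recall that a ring is semiprime precisely when it has no nonzero ideal $I$ with $I^2=0$. So I would take an arbitrary ideal $I$ of $L_K(\mg)$ with $I^2=0$ and aim to show $I=0$. The first and central observation is that $I^2=0$ forces the \emph{generators} of $I$ provided by Theorem~\ref{generatingset} to be highly constrained; in particular, any idempotent sitting inside $I$ must vanish, because if $e=e^2\in I$ then $e=e^2\in I^2=0$.

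The key step is to exploit the idempotents $p_A$ and $p_v-\sum_{e\in S}s_es_e^*$. First I would show that $\mathcal H_I=\{A\in\mg^0\mid p_A\in I\}=\{\emptyset\}$: indeed, each $p_A$ is idempotent, so if $p_A\in I$ then $p_A=p_A^2\in I^2=0$, forcing $A=\emptyset$ by Lemma~\ref{graded}(1). The heart of the matter is then to rule out the existence of any nonzero element of $I$ at all. Here I would invoke the description of a generating set for $I$ from Theorem~\ref{generatingset}, whose generators have the form $(p_A+\sum_{i=2}^m k_is_c^{r_i})(p_A-\sum_{e\in S}s_es_e^*)$. Multiplying such a generator on the left by $p_A$ (and using $p_A(p_A-\sum_{e\in S}s_es_e^*)=p_A-\sum_{e\in S}s_es_e^*$, together with the idempotency of $p_A-\sum_{e\in S}s_es_e^*$) should let me extract the idempotent $p_A-\sum_{e\in S}s_es_e^*$ as an element forced to lie in $I$. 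Since $p_A-\sum_{e\in S}s_es_e^*$ is idempotent, it is then in $I^2=0$, hence equals zero; but by Lemma~\ref{graded}(1) this idempotent is nonzero whenever $A\neq\emptyset$, a contradiction unless no such generator survives.

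More cleanly, I would first argue that if $I^2=0$ then $I$ cannot contain any nonzero \emph{graded} ideal: by Corollary~\ref{gi-cor}(1) every graded ideal satisfies $J^2=J$, so if $J\subseteq I$ is graded and nonzero then $J=J^2\subseteq I^2=0$, forcing $J=0$. It therefore suffices to show that $I$ nonzero would entail $I$ containing a nonzero graded subideal. To obtain one, I would take the generating set from Theorem~\ref{generatingset} and split a nonzero generator into its homogeneous components; the degree-zero component is exactly the idempotent $p_A-\sum_{e\in S}s_es_e^*$. A careful argument—along the lines of the proof of Theorem~\ref{gi-generatingset}, multiplying the generator by suitable $p_A$'s and $s_\alpha,s_\alpha^*$'s—should show that $I$ contains either some nonzero $p_B$ or some nonzero idempotent $p_v-\sum_{e\in S}s_es_e^*$, both of which generate nonzero graded ideals. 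This contradicts the previous paragraph, forcing $I=0$.

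The main obstacle I anticipate is the passage from ``$I$ is a nonzero ideal with $I^2=0$'' to ``$I$ contains a nonzero idempotent of the prescribed form,'' since the generators furnished by Theorem~\ref{generatingset} are products of a cycle-polynomial factor and an idempotent factor, and I must be careful that the nonidempotent factor $(p_A+\sum k_is_c^{r_i})$ does not obstruct extracting a genuine nonzero idempotent lying in $I$. Dealing with the cyclic terms $s_c^{r_i}$—which raise the degree—requires either projecting onto the degree-zero component (legitimate only once one knows one is working inside a graded ideal) or manipulating with $s_c,s_c^*$ to annihilate them. I expect the honest route is to observe that the left multiplier $p_A+\sum_{i=2}^m k_is_c^{r_i}$ has invertible-like behavior on the relevant corner (its degree-zero part being the nonzero idempotent $p_A$), so that the generator itself, being a nonzero element of $I$, already yields via its own square the relation $0=x^2$, and expanding $x^2$ using $x=(p_A+\sum k_is_c^{r_i})(p_A-\sum_{e\in S}s_es_e^*)$ and $p_A(p_A-\sum s_es_e^*)=p_A-\sum s_es_e^*$ forces $p_A-\sum_{e\in S}s_es_e^*=0$, contradicting Lemma~\ref{graded}(1). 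Verifying this expansion cleanly is the one calculation that must be done with care.
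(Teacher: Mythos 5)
Your final computation is correct, and it constitutes a genuinely different route from the paper's. The paper never touches non-graded ideals: it invokes \cite[Proposition II.1.4(1)]{NO:1982} to reduce semiprimeness of the $\mathbb{Z}$-graded algebra $L_K(\mg)$ to showing that the only \emph{graded} ideal $I$ with $I^2=0$ is $I=0$, and then quotes Theorem~\ref{gi-generatingset}(2): such an $I$ is generated by idempotents, which lie in $I^2=0$. Your route instead squares an arbitrary generator $x=(p_A+\sum_{i=2}^mk_is_c^{r_i})(p_A-\sum_{e\in S}s_es_e^*)$ of an arbitrary ideal $I$ with $I^2=0$. To make that ``one calculation'' clean you should use the grading explicitly: writing $q:=p_A-\sum_{e\in S}s_es_e^*$, every homogeneous component of $x$ has degree $\ge 0$ and the degree-zero component is exactly $q$ (since $p_Aq=q$ while $s_c^{r_i}q$ is homogeneous of degree $r_i|c|>0$); hence the degree-zero component of $x^2$ is $q^2=q$, so $x^2=0$ forces $q=0$ and therefore $x=(p_A+\sum_ik_is_c^{r_i})q=0$, and $I=0$. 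What your approach buys is independence from the graded-ring reduction of \cite{NO:1982}; what the paper's buys is a two-line proof once Theorem~\ref{gi-generatingset} is available.

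Two of your intermediate steps are genuinely broken and should be discarded. First, left-multiplying the generator by $p_A$ does not isolate the idempotent factor: since $s(c)=v\in A$ one has $p_As_c=s_c$, so $p_Ax=x$; you cannot kill the terms $k_is_c^{r_i}$ this way. Second, the ``more cleanly'' route rests on the claim that every nonzero ideal of $L_K(\mg)$ contains a nonzero graded subideal, and this is false: for the ultragraph consisting of a single loop $c$ at $v$ one has $L_K(\mg)\cong K[x,x^{-1}]$, and the ideal generated by $1+x$ is nonzero and proper yet contains no nonzero graded ideal, because every nonzero graded ideal of $K[x,x^{-1}]$ contains a unit $kx^n$. (Lemma~\ref{prime2} of the paper shows this is precisely the situation for nonzero ideals containing no $p_v$.) Finally, your closing contradiction is misattributed: Lemma~\ref{graded}(1) does not assert that $p_A-\sum_{e\in S}s_es_e^*\neq 0$, and indeed this element vanishes when $A=\{v\}$ with $v$ regular and $S=s^{-1}(v)$. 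The correct conclusion from $q=0$ is not a contradiction but simply that the generator $x$ is itself zero, which is what gives $I=0$.
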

\begin{proof}
By Lemma \ref{graded}, $L_K(\mg)$ is $\mathbb{Z}$-graded. Then, by \cite[Proposition II.1.4 (1)]{NO:1982}, it suffices to check that the only graded ideal $I$ of $L_K(\mg)$ for which $I^2=0$ is $I=0$.
By Theorem \ref{gi-generatingset} (2), any graded ideal of $L_K(\mg)$ is generated by idempotents, and so the result follows immediately (see also Corollary~\ref{gi-cor}).
\end{proof}

We now establish the semiprimitivity result.

\begin{thm}\label{semiprimitive}
Let K be a field and $\mg$ an ultragraph. Then  the ultragraph Leavitt path algebra $L_K(\mg)$ is semiprimitive, i.e, $J(L_K(\mg))=0$.
\end{thm}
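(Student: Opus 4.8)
The Jacobson radical $J(L_K(\mathcal{G}))$ is an ideal, and I want to show it is zero. My plan is to exploit the $\mathbb{Z}$-grading on $L_K(\mathcal{G})$ (Lemma \ref{graded}) together with the fact that the Jacobson radical of a $\mathbb{Z}$-graded ring is itself a graded ideal. This is a standard result — for $\mathbb{Z}$-graded rings, $J(R)$ is graded (this follows from $J(R)$ being invariant under the grading automorphisms, or via Bergman's theorem that the Jacobson radical of a $\mathbb{Z}$-graded ring is graded). So it suffices to show that the only graded ideal contained in $J(L_K(\mathcal{G}))$ is zero.

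Assume toward a contradiction that $J := J(L_K(\mathcal{G}))$ is nonzero. Since $J$ is a nonzero graded ideal, Corollary \ref{gi-cor}(2) gives a vertex $v \in G^0$ with $p_v \in J$. The key tension is that $p_v$ is a nonzero idempotent. But a standard property of the Jacobson radical is that it contains no nonzero idempotents: if $e \in J$ is idempotent then $1 - e$ is invertible in the unitization (or one uses that $e \in J$ implies $e$ is quasi-regular, forcing $e = e^2 = 0$ from $e \circ e = e + e - e^2 = 2e - e = e$... more carefully, $J$ contains no nonzero idempotent because any idempotent $e \in J$ must be quasi-invertible, and a quasi-invertible idempotent is $0$). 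Since $p_v \neq 0$ by Lemma \ref{graded}(1), this is the desired contradiction, so $J = 0$.

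The main obstacle is confirming that the Jacobson radical of a (possibly non-unital) $\mathbb{Z}$-graded ring is graded, since $L_K(\mathcal{G})$ need not be unital. I would address this by either citing the appropriate graded-ring result (e.g. \cite{NO:1982}, as used in the proof of Theorem \ref{semiprime}) or by working in the unitization and checking the radical behaves well. The idempotent-exclusion step is then elementary: once $p_v \in J$ and $p_v = p_v^2$, quasi-regularity forces $p_v = 0$, contradicting Lemma \ref{graded}(1). Thus the crux is packaging the "radical is graded" fact correctly in the non-unital setting, after which Corollary \ref{gi-cor}(2) does the combinatorial work and the idempotent argument closes the proof.
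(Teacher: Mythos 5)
Your proof is correct and follows essentially the same route as the paper: establish that $J(L_K(\mathcal{G}))$ is a graded ideal (the paper does this via local units and \cite[Lemma 6.2]{ap:tlpaoag08}, which is the precise non-unital statement you were looking for), extract a nonzero idempotent from it using the generating-set machinery, and conclude from the fact that the Jacobson radical contains no nonzero idempotents. The only cosmetic difference is that you invoke Corollary~\ref{gi-cor}(2) to produce a vertex idempotent $p_v$, while the paper invokes Theorem~\ref{gi-generatingset}(2) to say $J$ is generated by idempotents; both rest on the same theorem and close the argument identically.
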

\begin{proof}
It is well-known (see, e.g., \cite[Lemma 2.5]{NN:2020}) that $L_K(\mg)$ is an algebra with
local units (specifically, the set of local units of $L_K(\mg)$ is given by $\{p_A\mid A\in \mg^0\}$). Then, by \cite[Lemma 6.2]{ap:tlpaoag08}, $J(L_K(\mg))$ is a graded ideal of $L_K(\mg)$. But, by Theorem~\ref{gi-generatingset} (2), $J(L_K(\mg))$ is generated by idempotents. Since the Jacobson radical of any ring contains no nonzero idempotents, we conclude $J(L_K(\mg))=0$, thus finishing the proof.
\end{proof}

\section{Prime ideals of ultragraph Leavitt path algebras}
The main aim of this section is to give a complete characterization of the prime ideals of an ultragraph Leavitt path algebra (Theorem \ref{prime6}). Consequently, we provide a method of constructing non-graded prime ideals of ultragraph Leavitt path algebras (Corollary \ref{prime8}).

We start extending the definition of a downward direct graph to ultragraphs.

\begin{defn}
For any ultragraph $\mg$ and vertices $v, w\in G^0$, we write $v \ge w$ if there exists a  path $p\in \mathcal{G}^*$ such that $s(p) = v$ and $w\in r(p)$. An ultragraph $\mg$ is called {\it downward directed} if, for any two $v, w\in G^0$ there exists a vertex $u\in G^0$ such that $v\ge u$ and $w\ge u$.
\end{defn}

In \cite[Theorem 2.4]{abr:opnpvnra}, Abrams, Bell, and Rangaswamy give a criterion for the Leavitt path algebras of an arbitrary graph to be prime. We extend this result to ultragraph Leavitt path algebras below.

\begin{thm}\label{prime1}
Let $K$ be an ultragraph and $K$ a field. Then, $L_K(\mg)$ is a prime ring if and only if $\mg$ is downward directed.
\end{thm}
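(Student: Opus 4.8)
The plan is to prove both implications, reformulating primeness via semiprimeness (Theorem~\ref{semiprime}). For $v\in G^0$ write $T(v):=\{u\in G^0 : v\ge u\}$, so that $\mg$ is downward directed exactly when $T(v)\cap T(w)\neq\varnothing$ for all $v,w\in G^0$. Recall also that, since $L_K(\mg)$ has local units, it is prime iff $p\,L_K(\mg)\,q\neq 0$ for all nonzero $p,q$, equivalently iff $IJ\neq 0$ for all nonzero ideals $I,J$.

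For the forward direction I would argue by contrapositive. Assume $\mg$ is not downward directed and fix $v,w$ with $T(v)\cap T(w)=\varnothing$; I claim $p_vL_K(\mg)p_w=0$. By Lemma~\ref{graded}(2) it suffices to check $p_v s_\alpha p_B s_\beta^* p_w=0$ for each spanning monomial. Applying the relations of Definition~\ref{utraLevittpathalg}, multiplication by $p_v$ on the left forces $s(\alpha)=v$ (hence $r(\alpha)\subseteq T(v)$) and by $p_w$ on the right forces $s(\beta)=w$ (hence $r(\beta)\subseteq T(w)$) whenever the product is nonzero; but a nonzero monomial needs $r(\alpha)\cap B\cap r(\beta)\neq\varnothing$, while $r(\alpha)\cap r(\beta)\subseteq T(v)\cap T(w)=\varnothing$, so the product vanishes. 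The length-$0$ cases $\alpha\in\mg^0$ or $\beta\in\mg^0$ run identically, now forcing $v$, resp.\ $w$, into the surviving generalized vertex. Thus $(L_K(\mg)p_vL_K(\mg))(L_K(\mg)p_wL_K(\mg))=0$, and these are nonzero ideals by Lemma~\ref{graded}(1), so $L_K(\mg)$ is not prime.

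For the backward direction, assume $\mg$ is downward directed. Since $L_K(\mg)$ is semiprime, it is prime iff every pair of nonzero ideals $I,J$ has $I\cap J\neq 0$: indeed $(I\cap J)^2\subseteq IJ\subseteq I\cap J$, and semiprimeness forces $(I\cap J)^2\neq 0$ once $I\cap J\neq 0$. So fix nonzero $a\in I$ and $b\in J$ and apply the reduction theorem for ultragraph Leavitt path algebras underlying \cite[Corollary 3.3]{gon:ratrt19}: there are $\mu,\nu\in\mg^*$ with $s_\mu^*as_\nu$ a nonzero element of a corner which, after multiplying by $p_v$ for a single vertex $v$ in the relevant generalized vertex, is either (i) $kp_v$ with $0\neq k\in K$, or (ii) a nonzero element of $p_vL_K(\mg)p_v\cong K[x,x^{-1}]$, where $v$ is the base of a cycle $c$ without exits. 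Note $s_\mu^*as_\nu\in I$, and do the same for $b$, obtaining a vertex $w$ with the reduced element in $J$. I then glue using downward directedness. If $a$ is of type (i) then $p_v\in I$, and $I$ is closed along paths ($p_v\in I$, $s(p)=v$ give $s_p=p_vs_p\in I$, so $p_{r(p)}=s_p^*s_p\in I$ and $p_u=p_up_{r(p)}\in I$ for $u\in r(p)$), so $p_u\in I$ for all $u\in T(v)$; if $a$ is of type (ii), then $c$ has no exit, every path from $v$ stays on $c$, and $T(v)$ is exactly the vertex set of $c$. Symmetrically for $w$. Picking $u\in T(v)\cap T(w)$: if both are type (i), then $p_u\in I\cap J$ is nonzero; if $b$ is type (ii), then $u$ lies on $b$'s no-exit cycle, its corner at $u$ is $\cong K[x,x^{-1}]$, and transporting the reduced element of $b$ along the cycle to $u$ gives a nonzero $g_u\in J\cap p_uL_K(\mg)p_u$, while either $p_u\in I$ forces $p_uL_K(\mg)p_u\subseteq I$ (so $g_u\in I\cap J$) or $a$ is also type (ii), in which case $u$ lies on both no-exit cycles, which determine the same corner $p_uL_K(\mg)p_u$ — a domain containing nonzero $f_u\in I$ and $g_u\in J$, whence $0\neq f_ug_u\in I\cap J$. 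In every case $I\cap J\neq 0$.

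The forward direction is essentially formal once the spanning-set bookkeeping is carried out. The main obstacle is the backward direction's dependence on the reduction theorem and, above all, on the behaviour of cycles \emph{without exits} in the ultragraph setting: one must verify that such a cycle based at $v$ yields a corner $p_vL_K(\mg)p_v\cong K[x,x^{-1}]$ (in particular a domain) and that $T(v)$ is precisely the cycle's vertex set. Both are delicate because in an ultragraph the ranges $r(\alpha_i)$ are \emph{subsets} of $G^0$, so the definition of exit must be unwound to exclude branching and sinks along the cycle; likewise, transporting the reduced element of $b$ along the cycle to the common descendant $u$ needs explicit justification. These are exactly the points where the graph intuition can mislead and a careful ultragraph-specific analysis is required.
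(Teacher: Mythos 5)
Your proof is correct, and while the forward implication is essentially the paper's argument run in contrapositive form (both reduce to the observation that a nonzero monomial $p_v s_\alpha p_A s^*_\beta p_w$ forces $r(\alpha)\cap A\cap r(\beta)$ to meet $T(v)\cap T(w)$), your converse takes a genuinely different route. The paper's converse is three lines: since $L_K(\mg)$ is $\mathbb{Z}$-graded, \cite[Proposition II.1.4]{NO:1982} reduces primeness to showing $IJ\neq 0$ for nonzero \emph{graded} ideals; Corollary~\ref{gi-cor}(2) puts some $p_v$ in $I$ and some $p_w$ in $J$; and downward directedness yields $u$ with $0\neq p_u=p_up_u\in IJ$. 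You instead work with arbitrary ideals, trading the graded reduction for the equivalence (via Theorem~\ref{semiprime}) of primeness with "nonzero ideals intersect nontrivially," and then invoking the reduction theorem of \cite{gon:ratrt19} to extract from each ideal either a vertex idempotent or a Laurent polynomial in a cycle without exits. This obliges you to carry out, inside the proof, the ultragraph-specific analysis of no-exit cycles — that $r(c_i)=\{s(c_{i+1})\}$, that $T(v)$ is the cycle's vertex set, that the corner at $v$ is $K[x,x^{-1}]$, and that the reduced element can be rotated along the cycle to a common descendant — which is precisely the content the paper defers to Lemmas~\ref{prime2} and~\ref{prime3} for its later classification of non-graded prime ideals. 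What your route buys is independence from the graded prime criterion of \cite{NO:1982} and a self-contained treatment of non-graded ideals; what it costs is length, reliance on the external reduction theorem, and the careful verifications you rightly flag as the delicate points. Both arguments are sound; the paper's is the more economical given that Corollary~\ref{gi-cor}(2) is already in hand.
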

\begin{proof}
($\Longrightarrow$) Assume that $L_K(\mg)$ is a prime ring. Let $v, w\in G^0$. Since the ideals $L_K(\mg)p_vL_K(\mg)$ and $L_K(\mg)p_wL_K(\mg)$ are nonzero, $L_K(\mg)p_vL_K(\mg)p_wL_K(\mg)$ is nonzero, and so $p_vL_K(\mg)p_w$  is nonzero. This implies that $p_vs_{\alpha}p_As^*_{\beta}p_w \neq 0$, for some $A\in \mg^0$ and $\alpha, \beta\in\mg^*$ with $r(\alpha)\cap A\cap r(\beta) \neq \emptyset$. Consider the following cases:
	
{\it Case 1}: $|\alpha| \ge 1$ and $|\beta| \ge 1$. We then have that
$s(\alpha) = v$ and $s(\beta) = w$, and so $v\ge u$ and $w\ge u$ for all $u\in r(\alpha)\cap A\cap r(\beta)$, as desired. 
	
{\it Case 2}: $|\alpha| = 0$ and $|\beta| \ge 1$. We then have that $s(\beta) = w$, $s_{\alpha}= p_B$ for some $B\in \mg^0$, and $v\in B\cap A\cap r(\beta)$, and hence $w\ge v$, as desired.
	
{\it Case 3}: $|\alpha| \ge 1$ and $|\beta| =0$. We then have that $s(\alpha) = v$, $s_{\beta}= p_C$ for some $C\in \mg^0$, and $w\in r(\alpha) \cap A\cap C$, and hence $v\ge w$, as desired.
	
In any of the above cases, we obtain that $v\ge u$ and $w\ge u$ for some $u\in G^0$. Thus, $\mg$ is downward directed.
	
($\Longleftarrow$) Assume that $\mg$ is downward directed. By Lemma \ref{graded}, $L_K(\mg)$ is a $\mathbb{Z}$-graded $K$-algebra. Then, by \cite[Proposition II.1.4]{NO:1982}, to establish the primeness of $L_K(\mg)$, we only need to show that $IJ \neq 0$ for any pair $I, J$ of nonzero graded ideals of $L_K(\mg)$. So, let $I$ and $J$ be nonzero graded ideals of  $L_K(\mg)$. By Corollary \ref{gi-cor} (2), there exist two vertices $v, w\in G^0$ such that $p_v\in I$ and $p_w\in J$. By downward directedness there exists a vertex $u\in G^0$ such that $v\ge u$ and $w\ge u$, that is, there exists two paths $\alpha, \beta\in \mg^*$ such that $s(\alpha) = v$, $s(\beta) = w$ and $u\in r(\alpha)\cap r(\beta)$. This implies that $p_u = p_u s^*_{\alpha}p_v s_{\alpha}\in I$ and $p_u = p_u s^*_{\beta}p_v s_{\beta}\in J$, and so $0\neq p_u= p_up_u\in IJ$, as desired.
\end{proof}
 
Next, we present a series of lemmas, which will lead to the characterization of the prime ideals in ultragraph Leavitt path algebras.

\begin{lem}\label{prime2}
Let $K$ be a field, $\mg$ an ultragraph and $I$ a nonzero ideal of $L_K(\mg)$ which does not contain $p_v$ for all $v\in G^0$. Then, $I$ is a non-graded ideal generated by elements of the form $x = p_v + \sum^n_{i=1}k_is^{r_i}_c$, where $c$ is a unique cycle without exits based at $v$, and  $k_i\in K$ with at least one $k_i\neq 0$.	
\end{lem}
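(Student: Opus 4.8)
The plan is to invoke Theorem~\ref{generatingset} to obtain a generating set for the nonzero ideal $I$ and then to use the hypothesis that $p_v\notin I$ for all $v\in G^0$ to drastically constrain the form of these generators. Recall that each generator has the shape
\[
(p_A + \sum^m_{i=2}k_is^{r_i}_c)(p_A - \sum_{e\in S}s_es^*_e),
\]
with $S$ a finite set of edges all emitted by a single vertex $v\in A$, and $c$ the unique cycle based at $v$ whenever some $k_i\neq0$. The first step is to show that $A$ must be a single vertex $\{v\}$: if $A$ properly contained $v$, then multiplying the generator on the left by $p_{A\setminus\{v\}}$ would produce $p_{A\setminus\{v\}}\in I$ (the cycle and edge terms are killed since they live at $v$), forcing a $p_w\in I$ for $w\in A\setminus\{v\}$, contrary to hypothesis. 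So every relevant generator has $A=\{v\}$.

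The second step is to rule out the ``edge-subtraction'' factor and to force $c$ to be exit-free. With $A=\{v\}$, a generator reads $(p_v+\sum_i k_i s_c^{r_i})(p_v-\sum_{e\in S}s_es_e^*)$. First I would argue $S=\emptyset$: if there were an edge $f\in s^{-1}(v)\setminus S$, then compressing by $s_f^*(\cdot)s_f$ yields (depending on whether $f$ is the initial edge of $c$) either $p_{r(f)}\in I$ or a shorter cyclic relation, and one pushes this down until some $p_w\in I$ appears, a contradiction; the regular-vertex case is handled as in the proof of Corollary~\ref{gi-cor}(3), and the infinite-emitter case similarly produces a vertex projection in $I$. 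Thus the factor $(p_v-\sum_{e\in S}s_es_e^*)$ equals $p_v$, and the generator simplifies to $x=p_v+\sum^n_{i=1}k_is_c^{r_i}$. Next, if some $k_i\neq0$ but $c$ had an exit, then the very argument in the proof of Corollary~\ref{gi-cor}(3) (compressing by powers of $s_c$ and then by a path realizing the exit or a sink in $r(c)$) would again yield a vertex projection $p_w\in I$, the desired contradiction; hence $c$ is a cycle \emph{without exits} based at $v$. Its uniqueness follows because two distinct cycles at $v$ would give a second cycle that serves as an exit to the first.

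The third step is to confirm that $I$ is non-graded and that at least one coefficient is genuinely nonzero. Since $I$ is nonzero and contains no vertex projection, by part (2) of Corollary~\ref{gi-cor} it cannot be graded (a nonzero graded ideal always contains some $p_v$), establishing the non-graded claim. Moreover, at least one generator must have some $k_i\neq0$: otherwise every generator would be a vertex projection $p_v$, forcing $p_v\in I$ and contradicting the hypothesis. This gives generators exactly of the stated form $x=p_v+\sum^n_{i=1}k_is^{r_i}_c$ with $c$ a unique exit-free cycle at $v$ and not all $k_i$ zero.

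I expect the main obstacle to be the second step---specifically, the careful bookkeeping needed to show that the presence of an exit for $c$, or a nonempty edge-set $S$, always manifests (after an appropriate compression $s_\gamma^*(\cdot)s_\gamma$) as a bare vertex projection lying in $I$. The ultragraph setting complicates this because ranges $r(e)$ are sets rather than single vertices, so one must track generalized vertices through the compressions and appeal to hereditariness/saturation to land on an actual $p_w$ with $w\in G^0$; the relevant manipulations are exactly those already carried out in Corollary~\ref{gi-cor}(3), so the work is to adapt that argument verbatim to the present normalized generators.
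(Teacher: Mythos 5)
Your overall strategy is the paper's: feed the generators produced by Theorem~\ref{generatingset} through the hypothesis that no $p_w$ lies in $I$, and use exit-compressions as in the proof of Corollary~\ref{gi-cor}(3). Your first step (forcing $A=\{v\}$ by hitting the generator with $p_{A\setminus\{v\}}$) and your third step (non-gradedness via Corollary~\ref{gi-cor}(2), and not all $k_i=0$) are fine. The genuine gap is in your second step, where you claim to prove $S=\emptyset$ by supposing there is an edge $f\in s^{-1}(v)\setminus S$ and deriving a contradiction. That argument is pointed the wrong way: if it worked, it would show $s^{-1}(v)\setminus S=\emptyset$, i.e.\ $S\supseteq s^{-1}(v)$, which is the opposite of what you want. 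Worse, the claimed contradiction does not materialize in the one case that actually survives: when $f$ is the initial edge of an exit-free cycle $c$, the compression gives $s_f^*xs_f=p_{r(f)}+\sum_i k_i s_h^{r_i}$ with $h$ the rotation of $c$, which is again an element of the same shape, so ``pushing down'' never terminates in a vertex projection $p_w\in I$.

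The correct logical order (which is the one the paper follows) is essentially the reverse of yours. A nonzero generator forces $s^{-1}(v)\setminus S\neq\emptyset$: if $v$ were a sink the generator would be $p_A$, and if $v$ is regular with $S=s^{-1}(v)$ the right-hand factor collapses to $p_{A\setminus\{v\}}$, so in either case one gets a $p_w\in I$ or the generator is zero; infinite emitters automatically admit such an $f$. Any such $f$ must be the initial edge of $c$ (in particular some $k_i\neq0$), since otherwise $s_f^*xs_f=p_{r(f)}\in I$. One then shows $c$ has no exits by your exit-compression argument applied to the rotated element $p_{r(f)}+\sum_i k_i s_h^{r_i}$. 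Only now does $S=\emptyset$ follow: ``no exits'' forces $r(\alpha_i)=\{s(\alpha_{i+1})\}$ for every edge of $c$ (ranges in an ultragraph are sets, so this needs the exit definition, not just graph intuition), hence $r(c)=\{v\}=A$ and $|s^{-1}(v)|=1$, so $S$, being a proper subset of the singleton $s^{-1}(v)$, is empty. With the steps reordered this way your proof closes up and coincides with the one in the paper.
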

\begin{proof}
Since $I$ is a nonzero ideal which does not contain $p_v$ for all $v\in G^0$, we obtain, by Corollary~\ref{gi-cor} (2), that $I$ is a non-graded ideal.
By Theorem \ref{generatingset}, $I$ is generated as an ideal by elements in $I$ of the form	
\[x= (p_A-\sum_{i=1}^nk_is_c^{r_i})(p_A-\sum_{e \in S}s_es^*_e)\neq 0,\]	
where $A\in \mathcal{G}^0$, $k_1, \hdots, k_n\in K$, $r_1, \hdots, r_n$ are positive integers, $S$ is a finite subset of $\mathcal{G}^1$ consisting of edges with the same source vertex $v\in A$, and, whenever $k_i\neq 0$ for some $1\le i\le n$, $c$ is the unique cycle based at $v$ such that $A\subseteq r(c)$.

Let $x$ be a generating element of $I$ as above. If $v$ is a sink, then $x=p_v \in I$, a contradiction. So $v$ is not a sink. Suppose that $s^{-1}(v)\setminus S=\emptyset$. If $A=\{v\}$, then $x=0$, a contradiction. If $A\neq\{v\}$, then there exists $w\in A\setminus\{v\}$ and so $p_w=p_wx\in I$, a contradiction. Hence, we must have $s^{-1}(v)\setminus S\neq \emptyset$, that is,  there exists an edge $f\in s^{-1}(v)\setminus S$.

If $f$ is not the initial edge of $c$, then $s^*_fs_c=0$ and $$p_{r(f)} = s^*_f(p_A-\sum_{i=1}^nk_is_c^{r_i})(p_A-\sum_{e \in S}s_es^*_e) s_f= s^*_fxs_f\in I,$$ so $p_v\in I$ for all $v\in r(f)$, a contradiction. 
Therefore, $f$ must be the initial edge of $c$, say $c = fg$. Then, $$y:=s^*_fxs_f = s^*_f(p_A-\sum_{i=1}^nk_is_c^{r_i})(p_A-\sum_{e \in S}s_es^*_e) s_f = p_{r(f)} + \sum^n_{i=1}k_is^{r_i}_h\in I,$$ where $h$ is the cycle $gf$. If $c$ has an exit, then $h$ also has an exit. Write $h= h_1\cdots h_m$. So, there is either an edge $e\in \mathcal{G}^1$ such that there exists an $1\le i\le m$ for which $s(e)\in r(h_i)$ but $e\ne h_{i+1}$, or a sink $w$ such that $w\in r(h_i)$ for some $1\le i\le m$.  
In the first case, for $t:= h_1 \cdots h_ie$, we have that
$s^*_{t}ys_{t}=s^*_{t}p_{r(f)}s_t=p_{r(t)}\in I,$
so $p_u \in I$ for all $u\in r(h)$, a contradiction. In the second case, for $t:= h_1 \cdots h_iw$, we have that $s^*_{t}ys_{t}=p_w\in I$,  a contradiction again.

Thus, $c$ has no exits, and so we obtain that $r(c) = \{v\} = A$ and $|s^{-1}(v)| =1$. This implies that $S$ is the empty set (since $x\neq 0$). Therefore, the generators of $I$ are of the form $x = p_v + \sum^n_{i=1}k_is^{r_i}_c$, where $c$ is a unique cycle without exits based at $v$  and $k_i\in K$ with at least one $k_i\neq 0$, thus the proof is finished.
\end{proof}

The proofs of the next three lemmas are straightforward adaptations of the proofs of their corresponding graph versions given in \cite[Lemma 3.5]{Ran:ttopiolpaoag}. We include it here for completeness.

\begin{lem}[{cf.  \cite[Lemma 3.5]{Ran:ttopiolpaoag}}]\label{prime3}
Let $K$ be a field, $\mg$ a downward directed ultragraph, and $I$ a nonzero ideal of $L_K(\mg)$ which does not contain $p_v$ for all $v\in G^0$. Then, there is a unique cycle $c$ without exits in $\mg$, and $I$ is a non-graded principal ideal generated by $p(s_c)$, where $p(x)$ is a polynomial in $K[x]$.	
\end{lem}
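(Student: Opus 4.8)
The plan is to build on Lemma~\ref{prime2}, which already tells us that $I$ is generated by elements of the form $x = p_v + \sum_{i=1}^n k_i s_c^{r_i}$, where $c$ is a cycle without exits based at $v$ and at least one $k_i \neq 0$. The first task is to establish \emph{uniqueness}: there is a single cycle $c$ without exits that underlies \emph{all} generators of $I$. The key observation is that a cycle without exits behaves very rigidly, so I would first argue that the existence of one such generator forces the cycle to be essentially unique. Given two generators with underlying cycles $c$ (based at $v$) and $c'$ (based at $v'$), I would use downward directedness to find a common descendant $u$ with $v \ge u$ and $v' \ge u$. Since $c$ has no exits, following any path out of $v$ that reaches $u$ must in fact travel along $c$, which forces $u$ to lie on the cycle $c$ (its range is $\{v\}$); similarly $u$ lies on $c'$. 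The main obstacle is precisely here: translating the combinatorial rigidity of exit-free cycles in the \emph{ultragraph} setting (where ranges are sets of vertices, not single vertices) into the conclusion that $c$ and $c'$ are the same cycle. I would need to check carefully that ``no exit'' together with $r(c)=\{v\}$ prevents any branching, so that the only way for $v$ and $v'$ to share a descendant is for both to sit on one common cycle.

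Once uniqueness of $c$ is in hand, with $r(c) = \{v\}$ for its base vertex, I would show that $I$ is generated by a single polynomial expression in $s_c$. Each generator $p_v + \sum_i k_i s_c^{r_i}$ can be viewed as $q(s_c)$ for a polynomial $q(x) = 1 + \sum_i k_i x^{r_i} \in K[x]$ (with the convention that $s_c^0 = p_v$ plays the role of the identity on the corner $p_v L_K(\mg) p_v$). The subalgebra $p_v L_K(\mg) p_v$, because $c$ is a cycle without exits with $r(c)=\{v\}$, is isomorphic to $K[x,x^{-1}]$ via $s_c \mapsto x$; this is the standard ``cycle without exits yields a Laurent polynomial corner'' phenomenon, which I would either cite or verify using the multiplication relations $s_c^* s_c = p_{r(c)} = p_v$. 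Under this identification the generators of $I$ correspond to Laurent polynomials, and since $K[x,x^{-1}]$ is a principal ideal domain, the ideal they generate in the corner is principal, say generated by a single $p(s_c)$ with $p(x) \in K[x]$ (clearing negative powers, which is harmless as $s_c$ is invertible in the corner).

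Finally, I would lift this principality from the corner back to the whole algebra. The idea is that $I = L_K(\mg)\, p_v\, I\, p_v\, L_K(\mg)$ because every generator lives in the corner $p_v L_K(\mg) p_v$ (each generator satisfies $p_v x p_v = x$). Thus $I$ is generated as a two-sided ideal of $L_K(\mg)$ by the corner ideal $p_v I p_v$, which we have just shown is generated by the single element $p(s_c)$. Therefore $I$ is the principal two-sided ideal generated by $p(s_c)$. The fact that $I$ is non-graded follows immediately from Corollary~\ref{gi-cor}~(2): if $I$ were graded and nonzero it would contain some $p_u$, contradicting the hypothesis, so $I$ is non-graded as claimed. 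I expect the passage through the corner and the PID structure to be routine once uniqueness is settled; the genuinely delicate point, and where ultragraph-specific care is required, is the uniqueness of the exit-free cycle established via downward directedness.
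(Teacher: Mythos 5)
Your proposal is correct and follows essentially the same route as the paper: both start from Lemma~\ref{prime2}, use downward directedness together with the fact that an exit-free cycle has singleton ranges to pin down a unique cycle $c$ up to rotation, and then extract a single polynomial generator --- the paper by a division-algorithm argument applied directly to the generators, you by passing through the corner $p_vL_K(\mathcal{G})p_v\cong K[x,x^{-1}]$ and its PID structure, which is the same computation in different packaging. The only detail you elide is that the generators may a priori be based at different vertices on $c$; the paper normalizes them to a common base point via conjugation $s_\beta^*(\cdot)s_\beta$ along arcs of the cycle, a one-line step your argument would need as well before working inside a single corner.
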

\begin{proof}  By Lemma \ref{prime2}, $I$ is a non-graded ideal generated by elements of the form $p_w + \sum^n_{i=1}k_is^{r_i}_g$, where $g$ is a unique cycle without exits based at $w$ and  $k_i\in K$ with at least one $k_i\neq 0$.	Since $\mg$ is downward directed, $g$ is the only cycle without exits in $\mg$, except possibly a permutation of its vertices. This implies that if there is a cycle $c$ without exits based at a vertex $v$ in $\mg$, then the cycle $g$ based at $w$ is the same as the cycle $c$ based at $v$, obtained possibly by a rotation of the vertices on $c$. We note that if $\alpha$ is the part of $c$ from $v$ to $w$ and $\beta$ is the part of $c$ from $w$ to $v$, then $s^*_{\beta}(p_w + \sum^n_{i=1}k_is^{r_i}_g)s_{\beta}=p_v + \sum^n_{i=1}k_is^{r_i}_c$ and $s^*_{\alpha}(p_v + \sum^n_{i=1}k_is^{r_i}_c)s_{\alpha}=p_w + \sum^n_{i=1}k_is^{r_i}_g$. This implies that we may select a generating set for $I$ consisting of elements of the form $p_v + \sum^n_{i=1}k_is^{r_i}_c$, with the fixed cycle $c$ based at $v$, and where $k_i\neq 0$ for at least one $i$. Let $f(s_c) := p_v + \sum^n_{i=1}k_is^{r_i}_c$, where $f(x) = x^0 + \sum^n_{i=1}k_ix^{r_i}\in K[x]$ and we use the convention that $s^0_c = p_v$. Let $p(x)$ be a polynomial with the smallest positive degree in $K[x]$ such that $p(s_c)\in I$. Then, by the division algorithm in $K[x]$, every generator $f(s_c)$ of $I$ is a multiple of $p(s_c)$, and so $I$ is the principal ideal generated by $p(s_c)$, and the proof is finished.
\end{proof}	

Recall the notion of an admissible pair $(\mathcal{H}, S)$ of $\mg$ (see Section~\ref{sec:appl}). We denote the ideal of $L_K(\mg)$ generated by the set $\{p_A\mid A\in \mathcal{H}\} \cup \{p^{\mathcal{H}}_v\mid v\in S\}$ by $I_{(\mathcal{H}, S)}$.

\begin{lem}[{cf.  \cite[Lemma 3.6]{Ran:ttopiolpaoag}}]\label{prime4}
Let $K$ be a field, $\mg$ an ultragraph, and $I$ an ideal of $L_K(\mg)$. Then, the graded ideal $I_{(\mathcal{H}_I, S_I)} \subseteq I$ contains every other graded ideal of $L_K(\mg)$ inside $I$.
\end{lem}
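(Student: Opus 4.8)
The plan is to establish the lemma by first unwinding the definitions of $\mathcal{H}_I$, $S_I$, and the induced ideal $I_{(\mathcal{H}_I, S_I)}$, and then showing two things: that $I_{(\mathcal{H}_I, S_I)} \subseteq I$, and that any graded ideal $G$ with $G \subseteq I$ satisfies $G \subseteq I_{(\mathcal{H}_I, S_I)}$. The first inclusion is essentially immediate from the definitions: the generators of $I_{(\mathcal{H}_I, S_I)}$ are exactly the elements $p_A$ with $A \in \mathcal{H}_I$ (that is, $p_A \in I$ by definition of $\mathcal{H}_I$) together with $p^{\mathcal{H}_I}_v$ for $v \in S_I$ (again in $I$ by definition of $S_I$), so every generator already lies in $I$, and hence so does the ideal they generate. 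Note also that $I_{(\mathcal{H}_I, S_I)}$ is graded, since by Theorem~\ref{gi-generatingset} it is generated by homogeneous elements.

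For the second, and main, inclusion I would take an arbitrary graded ideal $G$ of $L_K(\mg)$ with $G \subseteq I$ and show $G \subseteq I_{(\mathcal{H}_I, S_I)}$. The key tool is Theorem~\ref{gi-generatingset}, applied to $G$: since $G$ is graded, it is generated by the set $\{p_A \mid A \in \mathcal{H}_G\} \cup \{p^{\mathcal{H}_G}_v \mid v \in S_G\}$, where $\mathcal{H}_G = \{A \in \mg^0 \mid p_A \in G\}$ and $S_G = \{v \in B_{\mathcal{H}_G} \mid p^{\mathcal{H}_G}_v \in G\}$. So it suffices to show that each of these generators lies in $I_{(\mathcal{H}_I, S_I)}$. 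First I would verify the containments $\mathcal{H}_G \subseteq \mathcal{H}_I$ and (in a suitable sense) $S_G \subseteq S_I$ that follow from $G \subseteq I$: if $p_A \in G$ then $p_A \in I$, so $A \in \mathcal{H}_I$, giving $p_A \in I_{(\mathcal{H}_I, S_I)}$ directly. This handles the first family of generators cleanly.

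The main obstacle will be the breaking-vertex generators $p^{\mathcal{H}_G}_v$ for $v \in S_G$. The subtlety is that the superscript refers to the smaller saturated hereditary set $\mathcal{H}_G$, whereas the target ideal is built from $\mathcal{H}_I$, so one cannot simply identify $p^{\mathcal{H}_G}_v$ with $p^{\mathcal{H}_I}_v$. I would argue as follows: suppose $v \in S_G$, so $p^{\mathcal{H}_G}_v = p_v - \sum_{e \in s^{-1}(v),\ r(e) \notin \mathcal{H}_G} s_e s_e^* \in G \subseteq I$. One must compare the edge-sum over $\{e : r(e) \notin \mathcal{H}_G\}$ with the edge-sum over $\{e : r(e) \notin \mathcal{H}_I\}$. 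Since $\mathcal{H}_G \subseteq \mathcal{H}_I$, the latter set of edges is contained in the former, and the extra edges $e$ (those with $r(e) \in \mathcal{H}_I$ but $r(e) \notin \mathcal{H}_G$) contribute terms $s_e s_e^*$ that already lie in $I_{(\mathcal{H}_I, S_I)}$, because $r(e) \in \mathcal{H}_I$ forces $s_e = s_e p_{r(e)} \in I_{(\mathcal{H}_I, S_I)}$. Writing
\[
p^{\mathcal{H}_G}_v = p^{\mathcal{H}_I}_v - \sum_{\substack{e \in s^{-1}(v)\\ r(e) \in \mathcal{H}_I,\ r(e) \notin \mathcal{H}_G}} s_e s_e^*,
\]
the displayed subtracted sum lies in $I_{(\mathcal{H}_I, S_I)}$, so it remains to check that $v \in S_I$, i.e. that $p^{\mathcal{H}_I}_v \in I$ and that $v$ is a genuine breaking vertex for $\mathcal{H}_I$. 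The containment $p^{\mathcal{H}_I}_v \in I$ follows by rearranging the previous display, since $p^{\mathcal{H}_G}_v \in I$ and the correction sum lies in $I_{(\mathcal{H}_I, S_I)} \subseteq I$; one then confirms $v \in B_{\mathcal{H}_I}$ using that $v$ is an infinite emitter (as $v \in B_{\mathcal{H}_G}$) and that the set of edges with range outside $\mathcal{H}_I$ is nonempty and finite, which follows from $\mathcal{H}_G \subseteq \mathcal{H}_I$ together with $v \in B_{\mathcal{H}_G}$. With $v \in S_I$ established, $p^{\mathcal{H}_I}_v$ is one of the generators of $I_{(\mathcal{H}_I, S_I)}$, and the displayed identity then shows $p^{\mathcal{H}_G}_v \in I_{(\mathcal{H}_I, S_I)}$, completing the proof.
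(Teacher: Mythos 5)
Your overall strategy coincides with the paper's: reduce to the generators of the graded ideal $G$ supplied by Theorem~\ref{gi-generatingset}~(3), observe that $\mathcal{H}_G \subseteq \mathcal{H}_I$, and compare $p^{\mathcal{H}_G}_v$ with $p^{\mathcal{H}_I}_v$ via the identity you display. However, there is one genuine gap: you assert that $v \in B_{\mathcal{H}_G}$ together with $\mathcal{H}_G \subseteq \mathcal{H}_I$ forces $v \in B_{\mathcal{H}_I}$, on the grounds that the set $s^{-1}(v) \cap \{e \mid r(e) \notin \mathcal{H}_I\}$ is ``nonempty and finite.'' Finiteness does follow (this set is contained in the corresponding finite set for $\mathcal{H}_G$), but nonemptiness does not: it can perfectly well happen that every edge $e \in s^{-1}(v)$ with $r(e) \notin \mathcal{H}_G$ has $r(e) \in \mathcal{H}_I$, in which case $v \notin B_{\mathcal{H}_I}$, the element $p^{\mathcal{H}_I}_v$ is not among the generators of $I_{(\mathcal{H}_I, S_I)}$, and your displayed identity does not by itself place $p^{\mathcal{H}_G}_v$ in $I_{(\mathcal{H}_I, S_I)}$.

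The paper treats exactly this situation as a separate case, and you need the same patch. If $r(e) \in \mathcal{H}_I$ for every $e \in s^{-1}(v)$ with $r(e) \notin \mathcal{H}_G$, then each such $s_e = s_e p_{r(e)}$ lies in $I_{(\mathcal{H}_I, S_I)} \subseteq I$, so the finite sum $\sum s_e s_e^*$ over these edges lies in $I$; since $p^{\mathcal{H}_G}_v = p_v - \sum s_e s_e^* \in G \subseteq I$, one concludes $p_v \in I$, i.e. $\{v\} \in \mathcal{H}_I$, and then $p^{\mathcal{H}_G}_v \in I_{(\mathcal{H}_I, S_I)}$ because $p_v$ and every $s_e s_e^*$ in the sum already lie there. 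With this case added, your argument is complete and agrees with the paper's proof.
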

\begin{proof}
Let $P$ be a graded ideal of $L_K(\mg)$ contained in $I$. By Theorem \ref{gi-generatingset} (3) (see, also \cite[Theorem 3.4]{ima:tlpaou}), $P = I_{(\mathcal{H}_P, S_P)}$.	We claim that $P \subseteq I_{(\mathcal{H}_I, S_I)}$. It is obvious that $\mathcal{H}_P \subseteq \mathcal{H}_I$. If $\mathcal{H}_P = \emptyset$, then $P = 0 \subseteq \mathcal{H}_I$. Consider the case when $\mathcal{H}_P \neq \emptyset$. We need to show that $p^{\mathcal{H}_P}_v \in I_{(\mathcal{H}_I, S_I)}$ for all $v\in S_P$. Let $v\in S_P$ and let $e_1, \hdots, e_n$ be the edges having $s(e_i) = v$ and $r(e_i)\notin \mathcal{H}_P$. 

Suppose that $v$ is a breaking vertex for $\mathcal{H}_I$. By re-indexing, we may assume that for some $m\le n$, $r(e_i) \notin \mathcal{H}_I$ for all $1\le i\le m$, and that $r(e_j) \in \mathcal{H}_I$ for all $m+1\le j\le n$. Since $s_{e_j}\in I_{(\mathcal{H}_I, S_I)}$ for all
$m+1\le j\le n$, we have $p^{\mathcal{H}_P}_v= p^{\mathcal{H}_I}_v-\sum^n_{j=m+1}s_{e_j}s^*_{e_j}\in I_{(\mathcal{H}_I, S_I)}$, as desired. 

Suppose that $v$ is not a breaking vertex for $\mathcal{H}_I$. Then, since $v$ is a breaking vertex for $\mathcal{H}_P$, $r(s^{-1}(v))\subseteq \mathcal{H}_I$, and so $s_{e_i}= s_{e_i}p_{r(e_i)}\in I_{(\mathcal{H}_I, S_I)}$ for all $1 \le i\le n$. This implies that $\sum^n_{i=1}s_{e_i}s^*_{e_i}\in I_{(\mathcal{H}_I, S_I)} \subseteq I$. Since $p_v - \sum^n_{i=1}s_{e_i}s^*_{e_i}\in I$, we must have $p_v\in I$, that is, $v\in \mathcal{H}_I$. Then, clearly we have that 
$p^{\mathcal{H}_P}_v\in I_{(\mathcal{H}_I, S_I)}$, what shows the claim and finishes the proof.
\end{proof}

\begin{lem}[{cf. \cite[Lemma 3.8]{Ran:ttopiolpaoag}}]\label{prime5}
Let $K$ be a field, $\mg$ an ultragraph, and $I$ a prime ideal of $L_K(\mg)$. Then, $I_{(\mathcal{H}_I, S_I)}$ is a prime ideal of $L_K(\mg)$.
\end{lem}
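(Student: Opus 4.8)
The plan is to exploit the graded structure of $L_K(\mg)$ in order to reduce the primeness of $P := I_{(\mathcal{H}_I, S_I)}$ to a statement purely about graded ideals, and then to play this off against the primeness of $I$ together with the extremal property of $I_{(\mathcal{H}_I, S_I)}$ recorded in Lemma~\ref{prime4}. The whole argument is short once the right reduction is set up.

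First I would recall that $P = I_{(\mathcal{H}_I, S_I)}$ is a graded ideal contained in $I$ and that, by Lemma~\ref{prime4}, it contains every graded ideal of $L_K(\mg)$ that lies inside $I$; in other words, $P$ is the largest graded ideal contained in $I$. Since $L_K(\mg)$ is $\mathbb{Z}$-graded by Lemma~\ref{graded} and $P$ is graded, the quotient $L_K(\mg)/P$ is again $\mathbb{Z}$-graded. Hence, by \cite[Proposition II.1.4]{NO:1982} applied to this graded quotient, it suffices to verify the graded-prime condition for $P$: for any two graded ideals $A, B$ of $L_K(\mg)$ with $AB \subseteq P$, one has $A \subseteq P$ or $B \subseteq P$. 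The passage from this graded-prime condition to genuine primeness of $P$ is routine, since graded ideals of $L_K(\mg)/P$ correspond to graded ideals of $L_K(\mg)$ containing $P$; given arbitrary graded $A, B$ with $AB \subseteq P$, one replaces them by $A+P$ and $B+P$ and notes that $(A+P)(B+P) \subseteq P$.

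Next, given graded ideals $A, B$ with $AB \subseteq P$, I would observe that $AB \subseteq P \subseteq I$, so the primeness of $I$ forces $A \subseteq I$ or $B \subseteq I$. Assuming without loss of generality that $A \subseteq I$, the fact that $A$ is a graded ideal contained in $I$ together with Lemma~\ref{prime4} yields $A \subseteq I_{(\mathcal{H}_I, S_I)} = P$. This is precisely the graded-prime condition, and therefore $P$ is prime.

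The only delicate point is the reduction carried out in the first step: one must be careful that verifying the implication only for graded ideals genuinely establishes primeness of the (graded) ideal $P$, which is exactly where \cite[Proposition II.1.4]{NO:1982}, invoked on the graded quotient $L_K(\mg)/P$, is essential. Beyond correctly deploying this graded machinery, I anticipate no real obstacle, as the remainder is a direct combination of the primeness of $I$ with the maximality of $I_{(\mathcal{H}_I, S_I)}$ among graded ideals inside $I$.
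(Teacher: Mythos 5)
Your proposal is correct and follows essentially the same route as the paper's proof: reduce to the graded-prime condition via \cite[Proposition II.1.4]{NO:1982}, use the primeness of $I$ to place one of the graded factors inside $I$, and then invoke Lemma~\ref{prime4} to push it down into $I_{(\mathcal{H}_I, S_I)}$. The extra remarks you add about passing to $A+P$ and $B+P$ are harmless elaborations of the same argument.
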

\begin{proof}
Since $L_K(\mg)$ is $\mathbb{Z}$-graded, by \cite[Proposition II.1.4]{NO:1982}, $I_{(\mathcal{H}_I, S_I)}$ is a prime ideal if, and only if, $I_{(\mathcal{H}_I, S_I)}$ is a  graded prime ideal.

Let $A = I_{(\mathcal{H}_1, S_1)}$ and $B= I_{(\mathcal{H}_2, S_2)}$ be graded ideals of $L_K(\mg)$ with $AB\subseteq I_{(\mathcal{H}_I, S_I)}$. Since $I$ is prime, one of them, say $A$, is contained in $I$. By Lemma \ref{prime4}, $A\subseteq I_{(\mathcal{H}_I, S_I)}$, and so $I_{(\mathcal{H}_I, S_I)}$ is graded prime, and the proof is finished. 
\end{proof}

To establish the description of prime ideals, we need first to recall the notion of the quotient ultragraph of an ultragraph, which was introduced in \cite{Larki:piapioua19} and \cite{ima:tlpaou}. Let $\mg=(G^0, \mg^1, r_{\mg}, s_{\mg})$ be an ultragraph and $(\mathcal{H}, S)$ an admissible pair in $\mg$. For each $A\in \mg^0$, we let $\overline{A}:= A \cup \{w'\mid w\in A \cap (B_{\mathcal{H}}\setminus S)\}$, where $w'$ is another copy of $w$. Define an ultragraph $\overline{\mg}= (\overline{G}^0, \overline{\mg}^1, \overline{r}, \overline{s})$, where $\overline{G}^0 = G^0 \cup \{w'\mid w\in B_{\mathcal{H}}\setminus S\}$, $\overline{\mg}^1 = \mg^1$, $\overline{r}(e) = \overline{r_{\mg}(e)}$ and 
\[ \overline{s}(e) = \begin{cases}
s_{\mg}(e)' & \text{if } s_{\mg}(e) \in B_{\mathcal{H}}\setminus S \text{ and } r_{\mg}(e) \in \mathcal{H} , \\
s_{\mg}(e) & \text{otherwise} \end{cases} \] for all $e\in \overline{\mg}^1$. We denote by $\overline{\mg}^0$ the smallest subset of $\mathcal{P}(\overline{G}^0)$ that contains $\{v\}$ for all $v\in \overline{G}^0$,  contains $\overline{r}(e)$ for all $e\in \overline{\mg}^1$, and is closed under relative complements, finite unions and finite intersections. We note that $\overline{A} = A$ for all $A\in \mathcal{H}$, and so $\mathcal{H}$ is a saturated hereditary subset of $\overline{G}^0$ and the set of breaking
vertices of $\mathcal{H}$ in $\overline{\mg}$ is $S$. Moreover, by \cite[Lemma 3.3]{ima:tlpaou}, $L_K(\mg)$ is isomorphic to $L_K(\overline{\mg})$ as $\mathbb{Z}$-graded $K$-algebras.

By \cite[Lemma 2.3]{ima:tlpaou} we have an equivalent relation $\sim$ on $\overline{\mg}^0$ defined by $A \sim B$ if and only if $A \cap V = B\cap V$ for some $V\in \mathcal{H}$, and the operations
\begin{center}
$[A] \cap [B]:= [A \cap B]$, $[A] \cup [B]:= [A \cup B]$ and $[A] \setminus [B]:= [A \setminus B]$	
\end{center}
are well-defined on the equivalence classes $\{[A]\mid A\in \overline{\mg}^0\}$. We usually denote $[v]$ instead of $[\{v\}]$ for all $v\in \overline{G}^0$, and the set $\bigcup_{A\in \mathcal{H}}A$ is denoted by $\bigcup \mathcal{H}$.

The {\it quotient ultragraph} of $\mathcal{G}$ by $(\mathcal{H}, S)$ is the quadruple $$\mg/(\mathcal{H}, S) = (\Phi(G^0), \Phi(\mg^1), r, s),$$ where $$\Phi(G^0) := \{[v]\mid v\in G^0\setminus \bigcup \mathcal{H}\} \cup \{[w']\mid w\in B_{\mathcal{H}}\setminus S\},$$
$$\Phi(\mg^1) := \{e\in \mg^1\mid r_{\mg}(e) \notin\mathcal{H}\},$$
and $s: \Phi(\mg^1)\longrightarrow \Phi(G^0)$ and $r: \Phi(\mg^1)\longrightarrow \{[A]\mid A\in \overline{\mg}^0\}$ are the maps defined respectively by $s(e) = [s_{\mg}(e)]$ and $r(e) = [\overline{r_{\mg}(e)}]$ for all $e\in \Phi(\mg^1)$.

For any ultragraph $\mg$, and any vertex $v\in G^0$, we define $M(v) = \{w\in G^0\mid w\ge v\}$. 
For any path $\alpha = \alpha_1\alpha_2\cdots \alpha_n$ in $\mg$, the set $\{v\in G^0\mid v= s(\alpha_i) \text{ for some } 1\le i\le n\}$ is called the {\it set of all vertices} on $\alpha$. Following \cite{ar:fpsmolpa}, a cycle $c$ in $\mg$ is called an {\it exclusive cycle} if it is disjoint with every other cycle; equivalently, no vertex on $c$ is the base of a different cycle other than the cyclic permutation of $c$.

We are now in a position to provide the main result of this section, which extends Rangaswamy's result \cite[Theorem 3.13]{Ran:ttopiolpaoag} to ultragraph Leavitt path algebras.

\begin{thm}\label{prime6}
Let $K$ be a field, $\mg$ an ultragraph, and $P$ an ideal of $L_K(\mg)$ with $\mathcal{H} = \{A\in \mg^0\mid p_A \in P\}$. Then, $P$ is a prime ideal of $L_K(\mg)$ if, and only if, $P$ satisfies one of the following conditions: 
\begin{itemize} 	
\item[(i)] $P$ is generated by the set $\{p_A\mid A\in \mathcal{H}\} \cup \{ p^{\mathcal{H}}_v\mid v\in  B_{\mathcal{H}}\}$ and $G^0\setminus \bigcup \mathcal{H}$ is downward directed;
\item[(ii)] $P$ is generated by the set $\{p_A\mid A\in \mathcal{H}\} \cup \{ p^{\mathcal{H}}_v\mid v\in B_{\mathcal{H}}\setminus\{w\}\}$
for some $w\in B_{\mathcal{H}}$ and $G^0\setminus \bigcup \mathcal{H} = M(w)$;
\item[(iii)] $P$ is generated by the set $\{p_A\mid A\in \mathcal{H}\} \cup \{ p^{\mathcal{H}}_v\mid v\in  B_{\mathcal{H}}\} \cup \{f(s_c)\}$, where $c$ is an exclusive cycle in $\mg$ based at a vertex $v$, $G^0\setminus \bigcup \mathcal{H} = M(v)$, and $f(x)$ is an irreducible polynomial in $K[x, x^{-1}]$.
\end{itemize}
\end{thm}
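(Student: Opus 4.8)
The plan is to reduce the characterization of prime ideals to two already-established pieces: the structure of graded ideals (Theorem~\ref{gi-generatingset}), together with the principal-ideal description for non-graded ideals (Lemmas~\ref{prime2}--\ref{prime5}), and the primeness criterion for the whole algebra (Theorem~\ref{prime1}) applied to a suitable quotient. The key structural observation is that for any ideal $P$ with associated saturated hereditary set $\mathcal{H} = \mathcal{H}_P$, Lemma~\ref{prime5} tells us that $I_{(\mathcal{H}, S_P)}$ is already a graded prime ideal whenever $P$ is prime, and Lemma~\ref{prime4} tells us it is the largest graded ideal inside $P$. So the first move is to pass to the quotient $L_K(\mg)/I_{(\mathcal{H}, S_P)}$, which by the discussion preceding the theorem is graded-isomorphic to $L_K(\mg/(\mathcal{H}, S_P))$, the Leavitt path algebra of the quotient ultragraph. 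Under this isomorphism, primeness of $P$ transfers to primeness of the image ideal $\overline{P}$, and $\overline{P}$ is an ideal of a quotient ultragraph Leavitt path algebra whose associated hereditary set is trivial.

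\textbf{Proving the forward direction.} Assuming $P$ is prime, I would split into two cases according to whether $P$ is graded or non-graded. If $P$ is graded, then $P = I_{(\mathcal{H}, S_P)}$ by Theorem~\ref{gi-generatingset}, and primeness of $P$ translates (via the quotient isomorphism) into primeness of the zero ideal of $L_K(\mg/(\mathcal{H}, S_P))$, i.e.\ into that algebra being a prime ring. By Theorem~\ref{prime1} this is equivalent to $\mg/(\mathcal{H}, S_P)$ being downward directed, which unwinds to the downward-directedness of $G^0 \setminus \bigcup\mathcal{H}$ possibly together with an added breaking vertex; the two sub-cases $S_P = B_{\mathcal{H}}$ and $S_P = B_{\mathcal{H}}\setminus\{w\}$ give exactly alternatives (i) and (ii). If instead $P$ is non-graded, then $I_{(\mathcal{H}, S_P)} \subsetneq P$, and the image $\overline{P}$ is a nonzero non-graded prime ideal of $L_K(\mg/(\mathcal{H}, S_P))$ containing no vertex projection; Lemma~\ref{prime3} forces the quotient ultragraph to contain a unique cycle without exits and $\overline{P}$ to be principal, generated by $p(s_c)$ for a polynomial $p$. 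Primeness then forces $p$ to be irreducible in $K[x, x^{-1}]$, and pulling back through the isomorphism recovers alternative (iii), with the cycle $c$ being exclusive in $\mg$ and $G^0 \setminus \bigcup\mathcal{H} = M(v)$.

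\textbf{Proving the converse} is mostly a matter of running these equivalences backwards: in each of cases (i), (ii), (iii) one identifies $P$ (or $I_{(\mathcal{H}, S_P)}$) with the kernel of the graded quotient map to $L_K(\mg/(\mathcal{H}, S_P))$, and then verifies that the corresponding quotient algebra (or the quotient by the principal ideal generated by an irreducible $p(s_c)$) is prime, invoking Theorem~\ref{prime1} in cases (i) and (ii) and a direct polynomial-ring argument in case~(iii). The delicate bookkeeping is confirming that downward-directedness of $G^0\setminus\bigcup\mathcal{H}$ together with the prescribed set of breaking vertices is exactly the downward-directedness of the quotient ultragraph, accounting correctly for the ghost vertices $w'$ introduced in the construction of $\mg/(\mathcal{H}, S)$.

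\textbf{The hard part} will be the quotient-ultragraph translation itself, rather than the ring-theoretic skeleton. The combinatorics of $\mg/(\mathcal{H}, S)$ --- in particular how breaking vertices split into a genuine vertex $[v]$ and its copy $[w']$, and how paths and cycles in the quotient correspond to paths and cycles in $\mg$ --- must be tracked carefully so that ``downward directed in the quotient'' reads off as the precise conditions on $G^0 \setminus \bigcup\mathcal{H}$ and $M(w)$ or $M(v)$ stated in (i)--(iii). The intuition from graphs is a reliable guide here, but as the introduction warns, the set-valued ranges of ultragraph edges mean one cannot simply quote the graph argument; one has to verify that the equivalence $A \sim B$ and the induced operations on $\overline{\mg}^0$ behave compatibly with downward-directedness and with the exclusivity of the cycle $c$.
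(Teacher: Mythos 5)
Your proposal follows essentially the same route as the paper's proof: split on whether $P$ is graded, pass to the quotient ultragraph $\mg/(\mathcal{H}, S)$ via \cite[Theorem 3.4]{ima:tlpaou}, apply Theorem~\ref{prime1} to read off downward-directedness (with the breaking-vertex sinks $[w']$ forcing $B_{\mathcal{H}}\setminus S$ to be empty or a singleton), and in the non-graded case use Lemmas~\ref{prime3}--\ref{prime5} plus the corner isomorphism $p_{[v]}L_K(\mg/(\mathcal{H}, B_{\mathcal{H}}))p_{[v]}\cong K[x,x^{-1}]$ to pin down the irreducible $f(s_c)$. The only small point you leave implicit is the argument that in the non-graded case one must actually have $S = B_{\mathcal{H}}$ (since an exitless cycle cannot coexist with the sink $[w']$ in a downward directed quotient), but this falls under the bookkeeping you already flag.
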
	
\begin{proof}
We let $S:= \{w\in B_{\mathcal{H}}\mid p^{\mathcal{H}}_v\in P\}$. Consider the following two cases:

{\it Case $1$}: $P$ is a graded ideal. Then, $P = I_{(\mathcal{H}, S)}$ (by Theorem \ref{gi-generatingset} (3) or \cite[Theorem 3.4 (2)]{ima:tlpaou}) and $L_K(\mg)/P \cong L_K(\mg/(\mathcal{H}, S))$ as $\mathbb{Z}$-graded $K$-algebras (by \cite[Theorem 3.4 (1)]{ima:tlpaou}). 
Therefore, $P$ is prime if and only if $L_K(\mg/(\mathcal{H}, S))$ is a prime ring. By Theorem~\ref{prime1}, this is equivalent to $\mg/(\mathcal{H}, S)$ be downward directed. Now, for every $w\in B_{\mathcal{H}}\setminus S$, the corresponding vertex $[w']$ is a sink in the ultragraph $\mg/(\mathcal{H}, S)$. Since $\mg/(\mathcal{H}, S)$ is downward directed, there is at most one sink in $\mg/(\mathcal{H}, S)$, and so $B_{\mathcal{H}}\setminus S$ is either empty or a singleton $\{w\}$. Thus, $P$ is prime if and only if $B_{\mathcal{H}} = S$, in which case $G^0\setminus \bigcup \mathcal{H}$ is downward directed, or $B_{\mathcal{H}}\setminus \{w\} = S$, in which case 
$\Phi(G^0) = \{[v]\mid v\in G^0\setminus \bigcup \mathcal{H}\} \cup \{[w']\}$ and $[v] \ge [w']$ for all $[v]\in \Phi(G^0)$ (equivalently, $v\ge w$ for all $v\in G^0\setminus \bigcup \mathcal{H}$). Thus, the primeness of the graded ideal $P$ is equivalent to either Condition~(i) or Condition~(ii).

{\it Case $2$}: $P$ is a non-graded ideal. Assume that $P$ is prime. By Lemma \ref{prime5}, $I_{(\mathcal{H}, S)}$ is a graded prime ideal of $L_K(\mg)$ contained in $P$, and so, as showed in Case~$1$, either: 
\begin{enumerate}
    \item $B_{\mathcal{H}} = S$ and $G^0\setminus \bigcup \mathcal{H}$ is downward directed, or;
    \item $B_{\mathcal{H}}\setminus \{w\} = S$ and $v\ge w$ for all $v\in G^0\setminus \bigcup \mathcal{H}$.
\end{enumerate}

By \cite[Theorem 3.4 (1)]{ima:tlpaou}, $L_K(\mg)/I_{(\mathcal{H}, S)} \cong L_K(\mg/(\mathcal{H}, S))$ as $\mathbb{Z}$-graded $K$-algebras. 
Let $\phi: L_K(\mg)/I_{(\mathcal{H}, S)} \longrightarrow L_K(\mg/(\mathcal{H}, S))$ be an isomorphism of $\mathbb{Z}$-graded $K$-algebras and $Q := \phi(P/I_{(\mathcal{H}, S)})$. 

We note that, under condition (1), $\Phi(G^0) = \{[v]\mid v\in G^0\setminus \bigcup \mathcal{H}\}$ and, under condition (2), $\Phi(G^0) = \{[v]\mid v\in G^0\setminus \bigcup \mathcal{H}\} \cup \{[w']\}$ and  $[v] \ge [w']$ for all $[v]\in \Phi(G^0)$, where $p_{[w']} = \phi(p^{\mathcal{H}}_w + I_{(\mathcal{H}, S)})$. Since $w\notin S$, we have that $p^{\mathcal{H}}_w \notin P$, and so $p_{[w']}\notin Q$. Thus, in either case, we always get that $Q$ is a non-zero ideal of
$L_K(\mg/(\mathcal{H}, S))$, which does not contain $p_{[v]}$ for all $[v]\in \Phi(G^0)$. Since $\mg/(\mathcal{H}, S)$ is downward directed, by Lemma~\ref{prime3}, there is a unique cycle $c$ without exits in $\mg/(\mathcal{H}, S)$ such that $Q$ is a non-graded principal ideal generated by $p(s_c)$, where $p(x)$ is a polynomial in $K[x]$. We claim that $B_{\mathcal{H}} = S$. Indeed, suppose that $B_{\mathcal{H}}\setminus \{w\} = S$. Then, $[v] \ge [w']$ for all $[v]\in \Phi(G^0)$. Since $c$ has no exits, $[w']$ must lie on $c$ and hence $[w']$ is not a sink in $\mg/(\mathcal{H}, S)$, a contradiction, showing the claim. So, we have that $P$ is generated by $\{p_A\mid A\in \mathcal{H}\} \cup \{ p^{\mathcal{H}}_v\mid v\in  B_{\mathcal{H}}\} \cup \{f(s_c)\}$. Since $\Phi(G^0) = \{[v]\mid v\in G^0\setminus \bigcup \mathcal{H}\}$ is downward directed, and contains all vertices which lie on $c$, we obtain that $G^0\setminus \bigcup \mathcal{H} = M(v)$, where $v$ is the base of the cycle $c$. It is also clear that  no vertex on $c$ is the base of another distinct cycle in $\mg$.

We show that $f(x)$ is irreducible. Since $Q$ is a prime ideal of 
$L_K(\mg/(\mathcal{H}, S))$, by \cite[Lemma 3.10]{Ran:ttopiolpaoag}, $p_{[v]}Qp_{[v]}$ is a nonzero prime ideal of $p_{[v]}L_K(\mg/(\mathcal{H}, S))p_{[v]}$ generated by $p_{[v]} f(s_c)p_{[v]} = f(s_c)$. Since $c$ is a cycle without exits in $\mg/(\mathcal{H}, S)$, by Lemma~\ref{graded} (2), we have that $p_{[v]}L_K(\mg/(\mathcal{H}, S))p_{[v]}\cong K[x, x^{-1}]$ via the isomorphism $\lambda$ defined by: $p_{[v]}\longmapsto 1$, $s_{c}\longmapsto x$ and $s^*_{c}\longmapsto x^{-1}$. We should mention that the isomorphism $\lambda$ maps $f(s_c)$ to $f(x)$. Since $f(x)$ generates the non-zero prime ideal $\lambda(p_{[v]}Qp_{[v]})$ in $K[x, x^{-1}]$, $f(x)$ is an irreducible polynomial in $K[x, x^{-1}]$, as desired. 

Conversely, assume that $\mg$ contains a cycle $c$ based at a vertex $v$ such that  no vertex on $c$ is the base of another distinct cycle in $\mg$, $G^0\setminus \bigcup \mathcal{H} = M(v)$ and  $P$ is generated by the set $\{p_A\mid A\in \mathcal{H}\} \cup \{ p^{\mathcal{H}}_v\mid v\in  B_{\mathcal{H}}\} \cup \{f(s_c)\}$ for some irreducible polynomial $f(x)\in K[x, x^{-1}]$.
We then have that the quotient ultragraph $\mg/(\mathcal{H}, B_{\mathcal{H}})$ is downward directed and contains the cycle $c$ without exits. By \cite[Theorem 3.4 (1)]{ima:tlpaou}, there exists a graded isomorphism $\phi: L_K(\mg)/I_{(\mathcal{H}, B_{\mathcal{H}})} \longrightarrow L_K(\mg/(\mathcal{H}, B_{\mathcal{H}}))$. Let $Q := \phi(P/I_{(\mathcal{H}, B_{\mathcal{H}})})$. Since $P$ is generated by the set $\{p_A\mid A\in \mathcal{H}\} \cup \{ p^{\mathcal{H}}_v\mid v\in  B_{\mathcal{H}}\} \cup \{f(s_c)\}$, $Q$ is generated by $f(s_c)$. Since $f(x)$ is irreducible in $K[x, x^{-1}]$, and $p_{[v]}L_K(\mg/(\mathcal{H}, B_{\mathcal{H}}))p_{[v]}\cong K[x, x^{-1}]$ via the isomorphism $\lambda$ defined by: $p_{[v]}\longmapsto 1$, $s_{c}\longmapsto x$ and $s^*_{c}\longmapsto x^{-1}$, we obtain that the ideal $p_{[v]}Q p_{[v]}$, being generated by $p_{[v]} f(s_c) p_{[v]} = f(s_c) = \lambda^{-1}(f(x))$, is a maximal ideal of the $K$-algebra $p_{[v]}L_K(\mg/(\mathcal{H}, B_{\mathcal{H}}))p_{[v]}$.

We claim that $Q$ is a prime ideal of $L_K(\mg/(\mathcal{H}, B_{\mathcal{H}}))$. Indeed, let $A$ and $B$ be ideals of $L_K(\mg/(\mathcal{H}, B_{\mathcal{H}}))$ such that $AB\subseteq Q$. We then have $p_{[v]}Ap_{[v]}p_{[v]}Bp_{[v]}\subseteq p_{[v]}Qp_{[v]}$, and so one of them, say $p_{[v]}Ap_{[v]}\subseteq p_{[v]}Qp_{[v]}$. If $A$ contains an element $p_{[w]}$ for some  vertex $[w]$ in $\mg/(\mathcal{H}, B_{\mathcal{H}})$, then there exists a path $p$ in $\mg/(\mathcal{H}, B_{\mathcal{H}})$ such that $s(p)= [w]$ and $[v]\in r(p)$ (since $w\ge v$), and so $$p_{[v]} = p_{[v]}s^*_p p_{[w]}s_pp_{[v]} \in A$$ and $p_{[v]}L_K(\mg/(\mathcal{H}, B_{\mathcal{H}}))p_{[v]}\subseteq A$. This implies that $p_{[v]}L_K(\mg/(\mathcal{H}, B_{\mathcal{H}}))p_{[v]}\subseteq p_{[v]}Ap_{[v]} \subseteq  p_{[v]}Qp_{[v]}$, a contradiction to the fact that $p_{[v]}Qp_{[v]}$ is a proper ideal of $p_{[v]}L_K(\mg/(\mathcal{H}, B_{\mathcal{H}}))p_{[v]}$. Hence, $A$ does not contain  $p_{[w]}$ for every  vertex $[w]$ in $\mg/(\mathcal{H}, B_{\mathcal{H}})$. By Lemma \ref{prime4}, the ideal $A$ of $L_K(\mg/(\mathcal{H}, B_{\mathcal{H}}))$  is generated by $q(s_c)$ for some polynomial $q(x)\in K[x]$. Since $p_{[v]}q(s_c) p_{[v]}= q(s_c)\in p_{[v]}Ap_{[v]} \subseteq p_{[v]}Q p_{[v]} \subseteq Q$, we get that $A \subseteq Q$, and so $Q$ is prime, showing the claim. This implies that $P$ is a prime ideal of $L_K(\mg)$.

Suppose $P$ is a graded ideal. Then, by Lemma \ref{prime4}, $P = I_{(\mathcal{H}, B_{\mathcal{H}})}$, and so $Q = 0$. On the other hand, $p_{[v]}Q p_{[v]}$ is generated by $p_{[v]} f(s_c) p_{[v]} = f(s_c) = \lambda^{-1}(f(x))\neq 0$, and so $Q \neq 0$, a contradiction. This shows that $P$ is a non-graded prime ideal of $L_K(\mg)$, thus finishing the proof.
\end{proof}

In \cite[Theorem 4.3]{ima:tlpaou} the authors showed that an ultragraph $\mg$ satisfies  Condition~$(K)$ if and only if every ideal of $L_K(\mg)$ is graded. As a corollary of Theorem~\ref{prime6}, we have the following.
	
\begin{cor}\label{prime7}
Let $K$ be a field and $\mg$ an ultragraph. Then $\mg$ satisfies Condition~$(K)$ if, and only if, every prime ideal of $L_K(\mg)$ is graded.
\end{cor}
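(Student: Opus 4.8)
The plan is to prove the two implications separately, using \cite[Theorem 4.3]{ima:tlpaou} for the easy direction and the classification of prime ideals in Theorem~\ref{prime6} for the hard one. For the forward implication, suppose $\mg$ satisfies Condition~$(K)$. Then, by \cite[Theorem 4.3]{ima:tlpaou}, every ideal of $L_K(\mg)$ is graded; in particular, every prime ideal of $L_K(\mg)$ is graded. This settles one direction immediately, so the whole content of the corollary lies in the converse.

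For the converse I would argue by contraposition: assuming that $\mg$ does \emph{not} satisfy Condition~$(K)$, I would construct a non-graded prime ideal, which immediately violates the hypothesis that every prime ideal is graded. Concretely, failure of Condition~$(K)$ yields a vertex that is the base of exactly one cycle; from this I would first extract an \emph{exclusive} cycle $c$ based at some vertex $v$ (a cycle meeting no other cycle). I would then assemble the admissible pair needed to invoke Theorem~\ref{prime6}(iii): set $X := \{u\in G^0\mid u\not\ge v\}$, the set of vertices that cannot reach $v$, and let $\mathcal{H} := \{A\in\mg^0\mid A\subseteq X\}$, so that $\bigcup\mathcal{H} = X$ and hence $G^0\setminus\bigcup\mathcal{H} = M(v)$. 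Choosing the irreducible polynomial $f(x) = x - 1 \in K[x,x^{-1}]$, the ideal generated by $\{p_A\mid A\in\mathcal{H}\}\cup\{p^{\mathcal{H}}_v\mid v\in B_{\mathcal{H}}\}\cup\{f(s_c)\}$ is, by Theorem~\ref{prime6}(iii), a non-graded prime ideal of $L_K(\mg)$, completing the contrapositive.

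The hard part will be the two combinatorial verifications feeding into Theorem~\ref{prime6}(iii). First, one must upgrade ``$v$ is the base of a unique cycle'' (the raw consequence of failing Condition~$(K)$) to the genuinely exclusive cycle demanded by (iii); the point is to rule out that some \emph{other} vertex lying on the cycle is the base of a second cycle, which in the ultragraph setting requires tracking reachability through the range sets $r(e)$ rather than through single target vertices. Second, one must check that $X$ is a saturated hereditary subset of $\mg^0$ and that, with this choice, $c$ becomes a cycle \emph{without exits} in the quotient ultragraph $\mg/(\mathcal{H},B_{\mathcal{H}})$ --- exactly the hypothesis Theorem~\ref{prime6}(iii) packages. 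Hereditariness and saturation of $X$ follow from the observation that the property of reaching $v$ is preserved forward along edges (if $u\in r(e)$ reaches $v$ then so does $s(e)$) and is inherited from the ranges of regular vertices, so that $r(e)\subseteq X$ whenever $s(e)\in X$ and $\{u\}\in\mathcal{H}$ whenever every range emitted by a regular vertex $u$ lies in $\mathcal{H}$. The absence of exits in the quotient is precisely where exclusivity of $c$ is used: any exit of $c$ must land in $X$, for otherwise its range would reach $v$ and thereby create a second cycle through a vertex of $c$, contradicting exclusivity.
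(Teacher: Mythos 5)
Your proposal is correct and follows essentially the same route as the paper: the forward implication is immediate (the paper cites Theorem~\ref{prime6}, you cite the equivalent fact from \cite[Theorem 4.3]{ima:tlpaou} quoted just before the corollary), and for the converse the paper likewise argues by contraposition, extracts an exclusive cycle $c$ based at $v$, takes exactly your $\mathcal{H}=\{A\in\mg^0\mid w\ngeq v \text{ for all } w\in A\}$, verifies it is hereditary and saturated so that $c$ has no exits in $\mg/(\mathcal{H},B_{\mathcal{H}})$, and invokes Theorem~\ref{prime6}(iii) with an irreducible $f\in K[x,x^{-1}]$ to produce a non-graded prime ideal. Your specific choice $f(x)=x-1$ and your explicit flagging of the passage from ``unique cycle at a vertex'' to ``exclusive cycle'' are harmless refinements of the same argument.
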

\begin{proof}
($\Longrightarrow$)	It immediately follows from Theorem \ref{prime6}.

($\Longleftarrow$) Assume that every prime ideal of $L_K(\mg)$ is graded and $\mg$ does not satisfy the Condition~$(K)$. Then, there exists a cycle $c$ in $\mg$, based at a vertex $v$, such that no vertex on $c$ is the base of another distinct cycle in $\mg$. Let $\mathcal{H} =\{A\in \mg^0\mid w\ngeq v \text{ for all } w\in A\}$. We note that $\mathcal{H}$ does not contain $\{u\}$ for all vertex $u$ on $c$. 

We claim that $\mathcal{H}$ is a hereditary and saturated subset of $\mg^0$. Indeed, let $e$ be an edge in $\mg$ with $\{s(e)\}\in \mathcal{H}$. If $w\ge v$ for some $w\in r(e)$, then $s(e)\ge v$, i.e., $\{s(e)\}\notin \mathcal{H}$, a contradiction, and so $r(e)\in \mathcal{H}$. It is obvious that $A \cap B \in \mathcal{H}$ for all $A, B\in \mathcal{H}$. Furthermore, it is also clear that if $B\in \mg^0$ with $B\subseteq A$ and $A\in \mathcal{H}$, then $B\in \mathcal{H}$. These observations show that $\mathcal{H}$ is hereditary. Let $w$ be a regular vertex in $\mg$ such that $r(e)\in \mathcal{H}$ for all $e\in s^{-1}(w)$. If $\{w\}\notin \mathcal{H}$, then $w\ge v$, and so there exists a path $p = e_1e_2\cdots e_m$ in $\mg$ such that $w = s(p)$ and $v\in r(p)$. This implies that $s(e_2)\ge v$, and so $r(e_1)\notin \mathcal{H}$. On the other hand, since $e_1\in s^{-1}(w)$, we must have $r(e_1)\in \mathcal{H}$, a contradiction. Thus, $\mathcal{H}$ is a hereditary and saturated subset of $\mg^0$, and the claim is proved.

From the construction of $\mathcal{H}$, we obtain that $c$ is a cycle without exits and based at $[v]$ in the quotient ultragraph $\mg/(\mathcal{H}, B_{\mathcal{H}})$. So, $p_{[v]}L_K(\mg/(\mathcal{H}, B_{\mathcal{H}}))p_{[v]}\cong K[x, x^{-1}]$ via the isomorphism defined by: $p_{[v]}\longmapsto 1$, $s_{c}\longmapsto x$ and $s^*_{c}\longmapsto x^{-1}$. Let $f(x)$ be an irreducible polynomial in $K[x, x^{-1}]$.
By \cite[Theorem 3.4 (1)]{ima:tlpaou}, there exists a $K$-algebra graded isomorphism $\phi: L_K(\mg)/I_{(\mathcal{H}, B_{\mathcal{H}})} \longrightarrow L_K(\mg/(\mathcal{H}, B_{\mathcal{H}}))$.  Let $P$ be an ideal of $L_K(\mg)$ such that $I_{(\mathcal{H}, B_{\mathcal{H}})} \subseteq P$ and the ideal $\phi(P/I_{(\mathcal{H}, B_{\mathcal{H}})})$ of $L_K(\mg/(\mathcal{H}, B_{\mathcal{H}}))$ is generated by $f(s_c)$. Then, $P$ is generated by the set $\{p_A\mid A\in \mathcal{H}\} \cup \{ p^{\mathcal{H}}_v\mid v\in  B_{\mathcal{H}}\} \cup \{f(s_c)\}$ and so, by Theorem~\ref{prime6}, $P$ is a non-graded prime ideal, a contradiction. This implies that $\mg$ satisfies the Condition~$(K)$, and finishes the proof.
\end{proof}

For any ultragraph $\mg$, we denote by $\mathcal{E}_{c}(\mg)$ the set of all exclusive cycles in $\mg$. For any commutative unit ring $R$, we denote by $Spec(R)$ the prime spectrum of $R$. Theorem \ref{prime6} and Corollary \ref{prime7} provide us with a method of constructing non-graded prime ideals of ultragraph Leavitt path algebras.


\begin{cor}\label{prime8}
Let $K$ be a field and $\mg$ an ultragraph. Then, the map $P\longmapsto (c, f(x))$, as indicated in Theorem~\ref{prime6}, defines a bijection between non-graded prime ideals of $L_K(\mg)$ and the set $\mathcal{E}_{c}(\mg)\times (Spec(K[x, x^{-1}])\setminus \{0\})$, where cycles obtained by permuting the vertices of a cycle are considered equal.
\end{cor}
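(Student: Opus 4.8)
The plan is to show that the assignment $P \longmapsto (c, f(x))$ described in Theorem~\ref{prime6}(iii) is a well-defined bijection onto $\mathcal{E}_c(\mg) \times (\mathrm{Spec}(K[x,x^{-1}])\setminus\{0\})$. First I would unpack how the pair $(c, f(x))$ arises from a non-graded prime ideal $P$. By Theorem~\ref{prime6}, every non-graded prime $P$ satisfies Condition~(iii): there is an exclusive cycle $c$ based at some vertex $v$ (the exclusivity being exactly the condition that no vertex on $c$ is the base of another distinct cycle, which was verified in the proof of Theorem~\ref{prime6}), together with an irreducible polynomial $f(x) \in K[x,x^{-1}]$, and $P$ is generated by $\{p_A \mid A \in \mathcal{H}\} \cup \{p^{\mathcal{H}}_v \mid v \in B_{\mathcal{H}}\} \cup \{f(s_c)\}$. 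Here $\mathcal{H} = \mathcal{H}_P$ is intrinsically determined by $P$, and the irreducible $f(x)$ generates a nonzero prime ideal of $K[x,x^{-1}]$, so it determines a point of $\mathrm{Spec}(K[x,x^{-1}])\setminus\{0\}$. Since we identify cycles that are cyclic permutations of one another, $c$ is well-defined as an element of $\mathcal{E}_c(\mg)$.

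The main work is to verify the three properties of a bijection. For \emph{surjectivity}, given any exclusive cycle $c \in \mathcal{E}_c(\mg)$ based at $v$ and any nonzero prime ideal of $K[x,x^{-1}]$ generated by an irreducible $f(x)$, I would take $\mathcal{H}$ to be the saturated hereditary subset forcing $G^0 \setminus \bigcup\mathcal{H} = M(v)$; the construction in the proof of Corollary~\ref{prime7} shows exactly how to build such an $\mathcal{H}$ from $c$ (namely $\mathcal{H} = \{A \in \mg^0 \mid w \ngeq v \text{ for all } w \in A\}$), and that proof already verifies $\mathcal{H}$ is saturated hereditary and that $c$ becomes a cycle without exits based at $[v]$ in $\mg/(\mathcal{H}, B_{\mathcal{H}})$. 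The converse direction of Theorem~\ref{prime6} then guarantees that the ideal $P$ generated by $\{p_A\} \cup \{p^{\mathcal{H}}_v\} \cup \{f(s_c)\}$ is a non-graded prime mapping to $(c, f(x))$. For \emph{injectivity}, suppose two non-graded primes $P_1, P_2$ give the same pair $(c, f(x))$. The cycle $c$ determines $v$ up to cyclic permutation, and the condition $G^0 \setminus \bigcup\mathcal{H}_{P_i} = M(v)$ forces $\bigcup\mathcal{H}_{P_1} = \bigcup\mathcal{H}_{P_2}$; combined with saturation and heredity this pins down $\mathcal{H}_{P_1} = \mathcal{H}_{P_2}$, and since in Case~(iii) one always has $S = B_{\mathcal{H}}$, the graded part $I_{(\mathcal{H}, B_{\mathcal{H}})}$ of each ideal coincides. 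Passing to the quotient via the isomorphism $L_K(\mg)/I_{(\mathcal{H}, B_{\mathcal{H}})} \cong L_K(\mg/(\mathcal{H}, B_{\mathcal{H}}))$ and then to the corner $p_{[v]}L_K(\mg/(\mathcal{H}, B_{\mathcal{H}}))p_{[v]} \cong K[x, x^{-1}]$, both $P_1$ and $P_2$ correspond to the ideal of $K[x,x^{-1}]$ generated by $f(x)$, so they must be equal.

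I expect the main obstacle to be the injectivity argument, specifically showing that the pair $(c, f(x))$ recovers $\mathcal{H}_P$ uniquely. The subtlety is that several distinct saturated hereditary subsets could in principle have the same union $\bigcup\mathcal{H}$; I would need to use the fact that $\mathcal{H}_P$ is the \emph{largest} hereditary saturated set with $p_A \in P$ (it is intrinsically $\{A \in \mg^0 \mid p_A \in P\}$), together with the precise correspondence $G^0 \setminus \bigcup\mathcal{H} = M(v)$ forced by Condition~(iii), to conclude that $c$ determines $\mathcal{H}_P$. An additional delicate point, peculiar to the ultragraph setting, is that the vertex set $G^0 \setminus \bigcup\mathcal{H}$ rather than a hereditary set of edges must be tracked carefully through the quotient construction, since generalized vertices do not behave as naively as vertices in a graph. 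Once $\mathcal{H}_P$ and hence $I_{(\mathcal{H}, B_{\mathcal{H}})}$ are shown to be determined by $(c, f(x))$, the final identification of $P$ with the ideal generated by $f(x)$ in the corner algebra $K[x,x^{-1}]$ is routine, using the isomorphism $\lambda$ and the standard bijection between ideals of a ring and ideals of its quotient containing a fixed ideal.
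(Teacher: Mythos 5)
Your proposal is correct and follows essentially the same route as the paper: surjectivity via the explicit construction $\mathcal{H}=\{A\in\mg^0\mid w\ngeq v \text{ for all } w\in A\}$ from the proof of Corollary~\ref{prime7} together with Theorem~\ref{prime6}(iii), and injectivity by noting that Theorem~\ref{prime6} forces the pair $(c,f(x))$ to determine $P$. The paper's own proof is in fact terser than yours on the injectivity side (it simply asserts uniqueness from Theorem~\ref{prime6}), and the subtlety you flag — that $\bigcup\mathcal{H}_P$ alone does not obviously pin down $\mathcal{H}_P$ — is a legitimate point, resolved by the quotient-ultragraph argument the paper carries out later in the proof of Theorem~\ref{Chenmod4} showing $\mathcal{H}_P=\mathcal{H}(v)$.
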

\begin{proof}
Let $c\in \mathcal{E}_{c}(\mg)$ with $v = s(c)$. Let $\mathcal{H} =\{A\in \mg^0\mid w\ngeq v \text{ for all } w\in A\}$. As was shown in the proof of Corollary \ref{prime7}, $\mathcal{H}$ is a hereditary and saturated subset of $\mg^0$. It is also obvious that $G^0\setminus \bigcup \mathcal{H} = M(v)$. Therefore, for each irreducible polynomial $f(x)\in K[x, x^{-1}]$, by Theorem \ref{prime6} (iii), the ideal $P$, generated by 	$\{p_A\mid A\in \mathcal{H}\} \cup \{ p^{\mathcal{H}}_v\mid v\in  B_{\mathcal{H}}\} \cup \{f(s_c)\}$, is a non-graded prime ideal of $L_K(\mg)$. By Theorem \ref{prime6}, $P$ is uniquely determined by the cycle $c$ and the polynomial $f(x)$.
From these observations and Theorem \ref{prime6}, we obtain the statement, thus finishing the proof.
\end{proof}	

In \cite[Theorem 4.7]{gr:saccfulpavpsgrt} Royer and the second author characterized simplicity of the Leavitt path algebra associated with an ultragraph via partial skew group ring theory. We close this section with a simpler proof,  that uses Theorem~\ref{prime6}, of the simplicity criteria given in \cite[Theorem 4.7]{gr:saccfulpavpsgrt}.

\begin{cor}[{cf. \cite[Theorem 4.7]{gr:saccfulpavpsgrt}}]\label{simplicity}
Let $\mg$ an ultragraph and $K$ a field. Then, $L_K(\mg)$ is simple if, and only if, the following conditions hold:
\begin{itemize} 	
\item[(1)] The only hereditary and saturated subsets of $\mg^0$ are $\varnothing$ and $\mg^0$;
\item[(2)] Every cycle in $\mg$ has an exit.
\end{itemize}
\end{cor}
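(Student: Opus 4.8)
The plan is to reduce simplicity to the statement that $L_K(\mg)$ has no nonzero proper (two-sided) ideals, and then to dispose of the two kinds of obstruction—graded and non-graded—separately, using the ideal theory developed above. Throughout I would use two standing facts: that $\{p_A\mid A\in\mg^{0}\}$ is a set of local units for $L_K(\mg)$ (so an ideal $I$ equals $L_K(\mg)$ as soon as every $p_A$ lies in $I$), and that for any ideal $I$ the set $\mathcal{H}_I=\{A\in\mg^{0}\mid p_A\in I\}$ is automatically saturated hereditary. Condition~(1) is then the assertion that the only possible $\mathcal{H}_I$ are the trivial ones, and Condition~(2) will control whether non-graded ideals can appear, via Corollary~\ref{prime8}.

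For the sufficiency direction I would assume (1) and (2) and take an arbitrary nonzero ideal $I$. Since every cycle has an exit, Corollary~\ref{gi-cor}(3) supplies a vertex $v\in G^{0}$ with $p_v\in I$, so $\mathcal{H}_I$ is a saturated hereditary subset containing the nonempty set $\{v\}$. Condition~(1) then forces $\mathcal{H}_I=\mg^{0}$, whence $p_A\in I$ for every $A$, and the local units give $I=L_K(\mg)$. This shows $L_K(\mg)$ is simple, and this half should be entirely routine.

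For necessity I would assume $L_K(\mg)$ simple and derive (1) and (2) by contradiction. To get (1), suppose $\mathcal{H}$ is a saturated hereditary subset containing a nonempty $A$ but with $\mathcal{H}\neq\mg^{0}$; then $I_{(\mathcal{H},B_{\mathcal{H}})}$ is a graded ideal containing $p_A\neq 0$ (Lemma~\ref{graded}(1)), and by the structure theorem for graded ideals \cite[Theorem 3.4]{ima:tlpaou} (equivalently Theorem~\ref{gi-generatingset}) its associated hereditary set is exactly $\mathcal{H}$, so the ideal is proper—contradicting simplicity. To get (2), suppose some cycle $c$ had no exit; a short check of the no-exit condition shows that any cycle based at a vertex of $c$ is forced, edge by edge, to follow $c$, so it is merely a rotation of $c$, and hence $c$ is an exclusive cycle and $\mathcal{E}_c(\mg)\neq\varnothing$. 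Since $\mathrm{Spec}(K[x,x^{-1}])\setminus\{0\}\neq\varnothing$ (for instance $(x-1)$), Corollary~\ref{prime8} then produces a non-graded prime ideal $P$; being prime it is proper, and being non-graded it is nonzero (the zero ideal is graded), once more contradicting simplicity.

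The hard part will be the two properness claims in the necessity direction, i.e. converting a nontrivial combinatorial datum into a genuinely nonzero proper ideal. For (1) this rests on the fact that the correspondence $(\mathcal{H},S)\mapsto I_{(\mathcal{H},S)}$ recovers $\mathcal{H}$, which I would invoke from \cite[Theorem 3.4]{ima:tlpaou} rather than reprove. For (2) the one genuinely ultragraph-specific point—where range sets can be infinite—is the implication ``no exit $\Rightarrow$ exclusive cycle,'' which must be verified honestly before Corollary~\ref{prime8} can be applied; once that is in hand, the rest follows formally.
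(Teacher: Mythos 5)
Your proof is correct, and both directions go through, but you distribute the work across the paper's toolkit differently than the paper itself does. For sufficiency, the paper first uses Condition~(1) to force $\mathcal{H}_I=\varnothing$ for a proper ideal $I$, and then invokes Lemma~\ref{prime2} to conclude that a nonzero such $I$ would be generated by elements $p_v+\sum_i k_i s_c^{r_i}$ with $c$ a cycle \emph{without} exits, contradicting Condition~(2); you instead apply Condition~(2) first, via Corollary~\ref{gi-cor}(3), to produce $p_v\in I$, and then use Condition~(1) plus local units to blow $I$ up to all of $L_K(\mg)$. Your route is slightly more economical (it avoids the finer structure of non-graded generators) and is the more standard Cuntz--Krieger-style argument; the paper's route has the minor advantage of exhibiting exactly what a hypothetical proper nonzero ideal would look like. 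For necessity, the treatments of (1) coincide (both reduce to the fact that $\mathcal{H}\mapsto I_{(\mathcal{H},B_{\mathcal{H}})}$ recovers $\mathcal{H}$, so a nontrivial $\mathcal{H}$ gives a nonzero proper graded ideal), and for (2) you go through Corollary~\ref{prime8} where the paper combines Theorem~\ref{prime1} (downward directedness from primeness) with Theorem~\ref{prime6}(iii) directly -- essentially the same construction packaged differently. One point in your favour: both your argument and the paper's need the implication ``cycle without exits $\Rightarrow$ exclusive cycle'' before Theorem~\ref{prime6}(iii)/Corollary~\ref{prime8} can be applied (since $r(c_i)$ may a priori contain vertices other than $s(c_{i+1})$, this requires the observation that the no-exit condition forces $r(c_i)=\{s(c_{i+1})\}$ and $s^{-1}(s(c_{i+1}))=\{c_{i+1}\}$); you flag and justify this step explicitly, whereas the paper passes over it in silence.
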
	
\begin{proof}
Assume that $L_K(\mg)$ is simple. Then, $L_K(\mg)$ is a prime ring, and so $\mg$ is downward directed, by Theorem \ref{prime1}. Since the ideal generated by a non-empty proper hereditary saturated subset is a nonzero proper ideal of $L_K(\mg)$, the simplicity of $L_K(\mg)$ yields that the only hereditary and saturated subsets of $\mg^0$ are $\varnothing$ and $\mg^0$. Suppose that there exists a cycle $c$ without exits in $\mg$. Then, since $\mg$ is downward directed, by Theorem~\ref{prime6} (iii), there are infinitely many non-graded prime ideals of $L_K(\mg)$ generated by $f(s_c)$, where $f(x)$ is an irreducible polynomial in $K[x, x^{-1}]$, a contradiction. Therefore, every cycle in $\mg$ has an exit. 

Conversely, assume that $\mg$ satisfies the two conditions. Let $I$ be an ideal properly contained in $L_K(\mg)$. By condition (1), $\mathcal{H} = \{A\in \mg^0\mid p_A\in I\} = \varnothing$. If $I$ is nonzero then, by Lemma~\ref{prime2}, $I$ is generated by elements of the form $x = p_v + \sum^n_{i=1}k_is^{r_i}_c$, where $c$ is a unique cycle without exit based at $v$, and  $k_i\in K$ with at least one $k_i\neq 0$. But this contradicts the hypothesis that every cycle in $\mg$ has an exit. Thus, $I = 0$ and hence $L_K(\mg)$ is simple, as desired.
\end{proof}


\section{Primitive ideals of ultragraph Leavitt path algebras}
The main aims of this section are to provide a criterion for primitivity of ultragraph Leavitt path algebras and to give a complete characterization of the primitive ideals of an ultragraph Leavitt path algebra (Theorem \ref{primitive3}).

In \cite{abr:opnpvnra}, Abrams, Bell and Rangaswamy gave a criterion for the Leavitt path algebra of an arbitrary graph to be primitive. In the following theorem, we extend this result to ultragraph Leavitt path algebras (the proof we present is based on the proof of \cite[Theorem 3.5]{abr:opnpvnra}, with the necessary modifications).

\begin{thm}\label{primitive1}
Let $K$ be an ultragraph and $K$ a field. Then $L_K(\mg)$ is primitive if, and only if, the following conditions hold:
\begin{itemize} 	
\item[(1)] $\mg$ is downward directed;
\item[(2)] Every cycle in $\mg$ has an exit.
\end{itemize}
\end{thm}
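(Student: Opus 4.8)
The plan is to prove both implications by combining the primeness criterion of Theorem~\ref{prime1} with the ideal-structure results of Corollary~\ref{gi-cor}, and, for the converse, by producing a faithful simple module of Chen type. For necessity, I would first invoke the standard fact that every left primitive ring is prime; Theorem~\ref{prime1} then gives at once that $\mg$ is downward directed, which is condition~(1). For condition~(2) I would argue by contraposition. Suppose a cycle $c$ based at a vertex $v$ has no exit. Analysing the definition of exit shows that then each range $r(c_i)$ is a singleton equal to the next source, and each vertex on $c$ is regular with a unique out-edge; telescoping the relation $p_{s(e)}=\sum_{s(e')=s(e)}s_{e'}s_{e'}^*$ along $c$ gives $s_cs_c^*=p_v=s_c^*s_c$, so $s_c$ is a unit in the corner $p_vL_K(\mg)p_v$ and this corner is isomorphic to $K[x,x^{-1}]$. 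Since $K[x,x^{-1}]$ is a commutative domain that is not a field it is not primitive, while a corner $eRe$ with $e=p_v\neq 0$ a nonzero idempotent (Lemma~\ref{graded}(1)) of a primitive ring is again primitive; hence $L_K(\mg)$ is not primitive. This contradiction forces every cycle to admit an exit.

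For sufficiency, assume (1) and (2). Because $L_K(\mg)$ has local units $\{p_A\}$, it is enough to exhibit a faithful simple left module $V$: then $\{0\}=\mathrm{Ann}(V)$ is a primitive ideal. The key point is that $G^0$ is countable by definition of an ultragraph, so the countable separation property that must be imposed in the graph case of \cite[Theorem 3.5]{abr:opnpvnra} is automatic here. Using countability together with downward directedness I would build a cofinal path: enumerate $G^0=\{v_1,v_2,\dots\}$ and construct vertices $x_0\ge x_1\ge\cdots$ with $v_n\ge x_n$, realising each step by a path of positive length whenever $x_{n-1}$ is not a sink (take an edge out of $x_{n-1}$ and then a common descendant, afforded by downward directedness, of its range with $v_n$). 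If the construction ever meets a sink $w$, then $w$ is cofinal; otherwise the concatenated edges yield an infinite path $p$ whose vertex set $p^0$ contains every $x_n$, so that every $u\in G^0$ satisfies $u\ge w$ for some $w\in p^0$, i.e. $p$ is cofinal. To this cofinal sink or infinite path I would attach the corresponding Chen simple module $V$, whose simplicity in the ultragraph setting is supplied by the extension of Chen's construction in \cite{gr:iaproulpa}.

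It then remains to prove faithfulness. The annihilator $\mathrm{Ann}(V)$ is a two-sided ideal, and if it were nonzero then, since by hypothesis every cycle has an exit, Corollary~\ref{gi-cor}(3) would place some $p_u\in\mathrm{Ann}(V)$. But cofinality gives $u\ge w$ for a vertex $w$ on the path (or for the sink $w$), and prepending a finite path from $u$ to $w$ to the relevant tail produces an element of the defining class of $V$ with source $u$; hence $p_uV\neq 0$, a contradiction. Therefore $\mathrm{Ann}(V)=\{0\}$, the module $V$ is faithful, and $L_K(\mg)$ is primitive.

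I expect the \textbf{main obstacle} to lie entirely in the sufficiency direction: carefully running the inductive construction of the cofinal path, in particular guaranteeing positive-length steps and correctly treating the case in which a sink appears, and pinning down the simplicity of the associated Chen module in the ultragraph setting (where the combinatorics of generalized vertices and ranges requires more care than for graphs). By contrast, once the exit-free corner has been identified with $K[x,x^{-1}]$, the necessity direction is routine, and the faithfulness step is immediate from Corollary~\ref{gi-cor}(3).
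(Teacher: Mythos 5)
Your proof is correct, and the necessity direction coincides with the paper's: both pass through primeness and Theorem~\ref{prime1} to get downward directedness, and both rule out exit-free cycles by identifying the corner $p_vL_K(\mathcal G)p_v$ with $K[x,x^{-1}]$ and using that a nonzero corner of a primitive ring is primitive. The sufficiency direction, however, takes a genuinely different route. The paper follows the Abrams--Bell--Rangaswamy template: it embeds $L_K(\mathcal G)$ as an ideal in a unital prime algebra $L_K(\mathcal G)_1$, uses a cofinal sequence of vertices to build a descending chain of nonzero idempotents $s_{\alpha_i}p_{w_i}s^*_{\alpha_i}$ meeting every nonzero ideal (the step ``every nonzero ideal contains some $p_w$'' being supplied, exactly as in your argument, by the exit condition via Corollary~\ref{gi-cor}(3)), and then invokes the abstract primitivity criterion of \cite[Proposition~3.4]{abr:opnpvnra} together with \cite[Lemma~3.1]{abr:opnpvnra}. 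You instead use the same cofinal path (or cofinal sink) to manufacture an explicit faithful simple module --- the Chen module $V_{[p]}$ or $N_w$ of \cite{gr:iaproulpa} --- and verify faithfulness directly from Corollary~\ref{gi-cor}(3) by prepending a connecting path to a tail of $p$. Both arguments are sound; yours trades the unitization and the external ring-theoretic criterion for the (also external) simplicity of the Chen modules, yields an explicit irreducible representation rather than a pure existence statement, and in effect anticipates the paper's own Section~5, where Lemma~\ref{Chenmod1} and case~(iii) of Theorem~\ref{Chenmod4} run essentially your argument for arbitrary primitive ideals. The one point worth spelling out carefully in your write-up is the sink case: your observation that downward directedness forces any sink $w$ to be cofinal (since $w\ge z$ with $w$ a sink implies $z=w$) is what makes the dichotomy ``cofinal sink or cofinal infinite path'' exhaustive.
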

\begin{proof}
$(\Longrightarrow)$ Assume that $L_K(\mg)$ is primitive. Then, $L_K(\mg)$ is prime and so, by Theorem~\ref{prime1}, $\mg$ is downward directed. If there is a cycle $c$, based at a vertex
$v$ in $\mg$ with no exits, then $p_vL_K(\mg)p_v\cong K[x, x^{-1}]$
(via the isomorphism defined by: $p_{v}\longmapsto 1$, $s_{c}\longmapsto x$ and $s^*_{c}\longmapsto x^{-1}$) and so $p_vL_K(\mg)p_v$  is not primitive. On the other hand, since a nonzero corner of a primitive ring must again be primitive, $p_vL_K(\mg)p_v$ is primitive, a contradiction. Therefore, every cycle in $\mg$ has an exit.

Assume that $\mg$ satisfies the two conditions. Since $\mg$ is downward directed, by Theorem~\ref{prime1}, $L_K(\mg)$ is prime. Using \cite[Lemma~3.1]{abr:opnpvnra}, we may embed $L_K(\mg)$ as an ideal in a prime $K$-algebra $L_K(\mg)_1$. Let $v$ be an arbitrary vertex in $\mg$, and let $T(v) = \{w\in G^0\mid v \ge w\}$. Since $G^0$ is countable, $T(v)$ is at most countable, and so we may label the elements of $T(v)$ as $\{v_1, v_2, \hdots\}$. We inductively
define a sequence $\alpha_1, \alpha_2, \hdots$ of paths in $\mg$ and a sequence $w_1, w_2, \hdots$ of vertices in $\mg$ such that, for each $i\in \mathbb{N}$, 
\begin{itemize} 	
\item[(i)] $\alpha_j=\alpha_ip_j$ for some path $p_j$ with $w_i = s(p_j)$ whenever $i\le j$, and
\item[(ii)] $v_i\ge w_i$.
\end{itemize}
To do so, define $\alpha_1 = v_1 = w_1$. Now suppose $\alpha_1, \hdots , \alpha_n$ and $w_1, \hdots , w_n$ have been defined with the indicated properties for some $n\in \mathbb{N}$. Since $\mg$ is downward directed, there is a vertex $w_{n+1}$ such that $v_{n+1} \ge w_{n+1}$ and $w_n \ge w_{n+1}$. Let $p_{n+1}$ be a path in $\mg$ with $w_n  = s(p_{n+1})$ and $w_{n+1}\in r(p_{n+1})$, and define $\alpha_{n+1}= \alpha_{n}p_{n+1}$. Then, $\alpha_{n+1}$ is clearly seen to have the desired properties. We should note that $s_{\alpha_i}p_{w_i}s_{\alpha_i}^*s_{\alpha_j}p_{w_j}s_{\alpha_j}^* = s_{\alpha_j}p_{w_j}s_{\alpha_j}^*$ for all pair of positive integers $i\le j$. Moreover, $s_{\alpha_i}p_{w_i}s_{\alpha_i}^*\neq 0$ for all $i$.

We claim that every nonzero ideal $I$ of $L_K(\mg)_1$ contains some $s_{\alpha_i}p_{w_i}s_{\alpha_i}^*$. Since $L_K(\mg)$ is a nonzero ideal of the prime $K$-algebra $L_K(\mg)_1$, $I\cap L_K(\mg)$ is also a nonzero ideal of $L_K(\mg)$. Since every cycle in $\mg$ has an exit, repeating the method described in the proof of the direction $(\Longleftarrow)$ of Theorem \ref{cgutheo}, $I$ contains $p_w$ for some $w\in G^0$. Since  $\mg$ is downward directed, there exists $u\in G^0$ such that $v \ge u$ and $w\ge u$. But $v\ge u$ yields that $u = v_n$ for some $n\in \mathbb{N}$, and so $w\ge v_n$.
By the above construction, we have $v_n\ge w_n\in r(\alpha_n)$, and so there is a path $q$ in $\mg$ for which $w = s(q)$ and $w_n\in r(q)$. This implies that $p_{w_n} =p_{w_n} p_{r(q)}= p_{w_n}s^*_{p}p_w s_p\in I$, and so $s_{\alpha_n}p_{w_n}s_{\alpha_n}^*\in I$ and the claim is proved.

By \cite[Proposition 3.4]{abr:opnpvnra}, $L_K(\mg)_1$ is primitive, and so $L_K(\mg)$ is primitive, by \cite[Lemma 3.1]{abr:opnpvnra}, thus finishing the proof.
\end{proof}

In \cite[Theorem 4.3]{Ran:ttopiolpaoag} Rangaswamy characterized the primitive ideals of the Leavitt path algebra of an arbitrary graph. In \cite[Theorem 5.7]{Larki:piapioua19} Larki described  primitive gauge invariant ideals of ultragraph $C^*$-algebras. In the remainder of this section we extend Rangaswamy's result to ultragraph Leavitt path algebras. Before doing so, we recall useful notions of ultragraphs, introduced in \cite[Definition 5.6]{Larki:piapioua19}. Let $\mathcal{H}$ be a saturated hereditary subset of an ultragraph $\mg^0$. We say that a path $\alpha = \alpha_1 \hdots \alpha_n$ lies in $\mg\setminus \mathcal{H}$ whenever $r_{\mg}(\alpha)\in \mg^0\setminus \mathcal{H}$. We also say that $\alpha$ has {\it an exit in} $\mg\setminus \mathcal{H}$ if either $r_{\mg}(\alpha_i)\setminus \{s_{\mg}(\alpha_{i+1})\}\in \mg^0\setminus \mathcal{H}$ for some $i$, or there exists an edge $f$ such that $r_{\mg}(f)\in \mg^0\setminus \mathcal{H}$, $s_{\mg}(f) = s_{\mg}(\alpha_i)$ and $f\neq \alpha_i$.  We have the following simple fact.

\begin{lem}\label{primitive2}
Let $\mg$ be an ultragraph and $\mathcal{H}$ a saturated hereditary subset of $\mg^0$. Then, every cycle in $\mg/(\mathcal{H}, B_{\mathcal{H}})$ has an exit if, and only if, every cycle in $\mg\setminus \mathcal{H}$ has an exit in $\mg\setminus \mathcal{H}$.
\end{lem}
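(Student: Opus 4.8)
The plan is to reduce the biconditional to a comparison carried out one cycle at a time, and then to match the two notions of exit through a short case analysis. First I would record the elementary correspondence between the cycles of the two ultragraphs. Because $\mathcal H$ is hereditary, if an edge $\alpha_i$ of a path satisfies $r_\mg(\alpha_i)\in\mathcal H$, then $\{s_\mg(\alpha_{i+1})\}\subseteq r_\mg(\alpha_i)$ forces $\{s_\mg(\alpha_{i+1})\}\in\mathcal H$ and hence $r_\mg(\alpha_{i+1})\in\mathcal H$; propagating this around a closed path shows that a cycle $c=\alpha_1\cdots\alpha_n$ lies in $\mg\setminus\mathcal H$ (that is, $r_\mg(c)\notin\mathcal H$) if and only if $r_\mg(\alpha_i)\notin\mathcal H$ for every $i$, equivalently if and only if every $\alpha_i$ belongs to $\Phi(\mg^1)$. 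Hence the cycles of $\mg/(\mathcal H,B_{\mathcal H})$ are precisely the images $\tilde c$ of the cycles $c$ of $\mg$ lying in $\mg\setminus\mathcal H$, and it suffices to prove, for one such pair, that $\tilde c$ admits an exit in the quotient (in the ultragraph sense, computed with the quotient maps $r$ and $s$) exactly when $c$ admits an exit in $\mg\setminus\mathcal H$.

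Next I would translate the two kinds of exit, using two bookkeeping observations: an edge $g$ survives to the quotient iff $r_\mg(g)\notin\mathcal H$, in which case its source is a genuine surviving vertex (otherwise $\{s_\mg(g)\}\in\mathcal H$ would force $r_\mg(g)\in\mathcal H$); and a surviving vertex $[u]$ is a sink of the quotient iff $u$ emits no edge whose range lies outside $\mathcal H$. With these in hand, an exit of type (1) for $\tilde c$, namely an edge $g\in\Phi(\mg^1)$ with $s(g)\in r(\tilde\alpha_i)$ and $g\neq\tilde\alpha_{i+1}$, splits according to whether $s_\mg(g)=s_\mg(\alpha_{i+1})$ (giving an exit of type (b) in $\mg\setminus\mathcal H$, since $g\neq\alpha_{i+1}$ and $r_\mg(g)\notin\mathcal H$) or $s_\mg(g)\neq s_\mg(\alpha_{i+1})$ (then $s_\mg(g)\in r_\mg(\alpha_i)\setminus\{s_\mg(\alpha_{i+1})\}$ is a surviving vertex, so this set is not in $\mathcal H$, an exit of type (a)); an exit of type (2), a sink $[u]\in r(\tilde\alpha_i)$, cannot equal $[s_\mg(\alpha_{i+1})]$, so it again witnesses $r_\mg(\alpha_i)\setminus\{s_\mg(\alpha_{i+1})\}\notin\mathcal H$. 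In the reverse direction, an exit of type (b) transports directly: the edge $f$ has $s(f)=s(\tilde\alpha_i)\in r(\tilde\alpha_{i-1})$ and $f\neq\tilde\alpha_i$, a type (1) exit of $\tilde c$ at the preceding index.

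The hard part will be the remaining reverse implication, from an exit of type (a) to an exit of $\tilde c$. Here one is handed only the set-theoretic information $B:=r_\mg(\alpha_i)\setminus\{s_\mg(\alpha_{i+1})\}\notin\mathcal H$ and must manufacture a genuine sink or a genuine surviving edge in the quotient range $r(\tilde\alpha_i)$. The natural route is to locate a surviving vertex $u\in B$, i.e. $u\notin\bigcup\mathcal H$: then $[u]\in r(\tilde\alpha_i)$ with $[u]\neq s(\tilde\alpha_{i+1})$, and $[u]$ is either a sink (a type (2) exit) or emits a surviving edge $g$ with $s(g)=[u]$, forcing $g\neq\tilde\alpha_{i+1}$ (a type (1) exit). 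The subtlety, and where I expect the real work to lie, is guaranteeing such a surviving $u$: this is exactly the point at which the Boolean structure of $\mg^0$ with its possibly infinite ranges interacts with the saturated hereditary set $\mathcal H$, and one must argue carefully that the membership $B\notin\mathcal H$ is genuinely detected by a vertex surviving into the quotient, rather than being an artefact of an infinite range whose vertices are all absorbed into $\bigcup\mathcal H$. I would attack this using the normal form of generalized vertices together with the hereditary and saturation closures of $\mathcal H$. By contrast, the cycle correspondence and the other three exit translations are routine.
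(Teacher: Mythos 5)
Your cycle correspondence and the whole forward direction (quotient exit $\Rightarrow$ exit in $\mg\setminus\mathcal H$) match the paper's argument, and are in fact spelled out more carefully than in the paper, which does not split the type (1) case according to whether $s_{\mg}(g)$ equals $s_{\mg}(\alpha_{i+1})$. The translation of a type (b) exit into a type (1) exit of the quotient is also fine. The genuine gap is the step you yourself flag as ``the hard part'': passing from $B:=r_{\mg}(\alpha_i)\setminus\{s_{\mg}(\alpha_{i+1})\}\notin\mathcal H$ to an exit of $\tilde c$ in $\mg/(\mathcal H,B_{\mathcal H})$. You announce that you would ``locate a surviving vertex $u\in B$'' and then invoke the normal form of generalized vertices, but you never carry this out, and the route cannot work in the generality required: $\mathcal H$ is only closed under \emph{finite} unions, so an infinite $B\in\mg^0$ can satisfy $B\notin\mathcal H$ while every singleton $\{u\}$ with $u\in B$ lies in $\mathcal H$. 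Concretely, take $G^0=\{v,u_1,u_2,\dots\}$, a single edge $c$ with $s(c)=v$ and $r(c)=G^0$, all $u_i$ sinks, and $\mathcal H$ the (hereditary and saturated) collection of finite subsets of $\{u_1,u_2,\dots\}$. Then $B=r(c)\setminus\{v\}\notin\mathcal H$, so the cycle $c$ has a type (a) exit in $\mg\setminus\mathcal H$, yet every $u_i$ is absorbed into $\bigcup\mathcal H$, the quotient has the single vertex $[v]$ and the single edge $c$, and no edge or sink witnesses an exit of $c$ in the quotient in the literal sense of the paper's definition. So this subcase is not bookkeeping, and your proposal does not close it.

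For comparison, the paper's own proof of this direction is the single sentence ``Then, $\alpha$ is also a cycle in $\mg\setminus \mathcal{H}$, and so $\alpha$ has an exit in $\mg\setminus \mathcal{H}$. This implies that $\alpha$ has an exit in $\mg/(\mathcal{H}, B_{\mathcal{H}})$,'' with no argument for the final implication; you have therefore correctly located the one point where real content is needed, but neither your sketch nor the paper supplies it. To make the step go through one must either interpret ``exit'' for cycles in a quotient ultragraph at the level of equivalence classes, so that $[B]\neq[\emptyset]$ itself counts as an exit (which is what the algebra detects: in the example above $p_{[B]}\neq 0$ and the corner $p_{[v]}L_K(\mg/(\mathcal H,B_{\mathcal H}))p_{[v]}$ is not isomorphic to $K[x,x^{-1}]$), or else arrange that $B\notin\mathcal H$ is witnessed by a vertex outside $\bigcup\mathcal H$, in which case your sink-or-surviving-edge dichotomy for $[u]$ does finish the argument.
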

\begin{proof}
($\Longrightarrow$)	Assume that every cycle in $\mg/(\mathcal{H}, B_{\mathcal{H}})$ has an exit, and $\alpha=\alpha_1 \hdots \alpha_n$ is a cycle in $\mg\setminus \mathcal{H}$. Then, $r_{\mg}(\alpha)\in \mg^0\setminus \mathcal{H}$. Since $\mathcal{H}$ is hereditary, $\{s_{\mg}(\alpha_i)\}$ and $r_{\mg}(\alpha_i)$ lie in $\mg^0\setminus \mathcal{H}$ for all $1\le i\le n$, and so $\alpha_1 \hdots \alpha_n$ is a cycle in $\mg/(\mathcal{H}, B_{\mathcal{H}})$. By the hypothesis, it has an exit in $\mg/(\mathcal{H}, B_{\mathcal{H}})$, that is, there exists $1\le i\le n$ such that $r(\alpha_i)$ contains some sink $[v]$ in $\mg/(\mathcal{H}, B_{\mathcal{H}})$, or there exist $1\le i\le n$ and an edge $f$ in $\mg/(\mathcal{H}, B_{\mathcal{H}})$ with $s(f)\in r(\alpha_i)$ but $f\neq \alpha_{i+1}$. In the first case, we have that $\{w\}\notin \mathcal{H}$ and $w\in r_{\mg}(\alpha_i)\setminus \{s_{\mg}(\alpha_{i+1})\}$,  and so $r_{\mg}(\alpha_i)\setminus \{s_{\mg}(\alpha_{i+1})\}\in \mg^0\setminus \mathcal{H}$ (since $\mathcal{H}$ is hereditary). In the second case, we have $r_{\mg}(f)\in \mg^0\setminus \mathcal{H}$, $s_{\mg}(f) = s_{\mg}(\alpha_i)$ and $f\neq \alpha_i$. Therefore, $\alpha$ has an exit in $\mg/(\mathcal{H}, B_{\mathcal{H}})$.

$(\Longleftarrow)$ Let $\alpha =\alpha_1 \hdots \alpha_n$ be a cycle in $\mg/(\mathcal{H}, B_{\mathcal{H}})$. Then, $\alpha$ is also a cycle in $\mg\setminus \mathcal{H}$, and so $\alpha$ has an exit in $\mg\setminus \mathcal{H}$. This implies that $\alpha$ has an exit in $\mg/(\mathcal{H}, B_{\mathcal{H}})$, and the proof is finished.
\end{proof}

We are now in a position to provide the main result of this section.

\begin{thm}\label{primitive3}
Let $K$ be a field, $\mg$ an ultragraph, and $P$ an ideal of $L_K(\mg)$ with $\mathcal{H} = \{A\in \mg^0\mid p_A\in P\}$. Then, $P$ is a primitive ideal of $L_K(\mg)$ if, and only if, $P$ satisfies one of the following conditions: 
\begin{itemize}
\item[(i)] $P$ is a non-graded prime ideal;
\item[(ii)] $P$ is generated by the set $\{p_A\mid A\in \mathcal{H}\} \cup \{ p^{\mathcal{H}}_v\mid v\in B_{\mathcal{H}}\setminus\{w\}\}$ for some $w\in B_{\mathcal{H}}$ and $G^0\setminus \bigcup \mathcal{H} = M(w)$;		
\item[(iii)] $P$ is generated by the set $\{p_A\mid A\in \mathcal{H}\} \cup \{ p^{\mathcal{H}}_v\mid v\in  B_{\mathcal{H}}\}$, $G^0\setminus \bigcup \mathcal{H}$ is downward directed, and every cycle in $\mg\setminus \mathcal{H}$ has an exit in $\mg\setminus \mathcal{H}$.

\end{itemize}
\end{thm}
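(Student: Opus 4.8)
The plan is to exploit the fact that every primitive ideal is prime, so that Theorem~\ref{prime6} already sorts $P$ into one of three families, and then to decide within each family exactly when $P$ is primitive by passing to the quotient and invoking the primitivity criterion of Theorem~\ref{primitive1}. Concretely, when $P$ is graded we have $P = I_{(\mathcal{H},S)}$ and $L_K(\mg)/P \cong L_K(\mg/(\mathcal{H},S))$, and when $P$ is non-graded we reduce to a further quotient of such an algebra; in both situations the graded isomorphism $L_K(\mg)/I_{(\mathcal{H},B_{\mathcal{H}})} \cong L_K(\mg/(\mathcal{H},B_{\mathcal{H}}))$ from \cite[Theorem 3.4 (1)]{ima:tlpaou} is the central device.

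First I would dispose of the non-graded case. If $P$ is primitive and non-graded, it is a non-graded prime ideal, which is Condition~(i); conversely I must show that every non-graded prime ideal is primitive. For such a $P$, Theorem~\ref{prime6}(iii) and its proof give $S = B_{\mathcal{H}}$ together with an exclusive cycle $c$ based at $v$ and an irreducible $f(x)$, and the graded isomorphism above carries $P/I_{(\mathcal{H},B_{\mathcal{H}})}$ onto the ideal $Q$ generated by $f(s_c)$. Since $c$ has no exit in $\mg/(\mathcal{H},B_{\mathcal{H}})$, Lemma~\ref{graded}(2) gives $p_{[v]}L_K(\mg/(\mathcal{H},B_{\mathcal{H}}))p_{[v]} \cong K[x,x^{-1}]$, so the corner of the prime ring $L_K(\mg)/P$ cut out by the image of $p_{[v]}$ is isomorphic to $K[x,x^{-1}]/(f(x))$, a field because $f$ is irreducible. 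The key technical point is then the ring-theoretic lemma that a prime ring with local units possessing a nonzero idempotent $e$ with $eRe$ a division ring is primitive; I would prove this by showing $Re$ is a minimal left ideal, using primeness to replace any $0 \neq x \in Re$ by an element $erx \in eRe$ that is invertible there, and then noting that a prime ring with a minimal one-sided ideal acts faithfully on it, hence is primitive. This yields that $P$ is primitive and establishes the equivalence with Condition~(i).

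Next I would treat the graded case, where $P$ is primitive iff $\mg/(\mathcal{H},S)$ is downward directed and every cycle in it has an exit, by Theorem~\ref{primitive1}. Since primitivity forces primeness, $P$ must already satisfy prime Condition~(i) or (ii) of Theorem~\ref{prime6}, and in both the quotient is downward directed, so only the exit condition remains. If $P$ satisfies prime Condition~(ii), so that $B_{\mathcal{H}}\setminus\{w\}=S$ and $G^0\setminus\bigcup\mathcal{H}=M(w)$, then $[w']$ is a sink in $\mg/(\mathcal{H},S)$ reachable from every vertex; a cycle without an exit would trap every path on itself and so could never reach $[w']$, contradicting downward directedness, hence every cycle automatically has an exit and $P$ is primitive. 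This is precisely Condition~(ii), with no further hypothesis needed. If instead $S=B_{\mathcal{H}}$ (prime Condition~(i)), then $P$ is primitive iff every cycle in $\mg/(\mathcal{H},B_{\mathcal{H}})$ has an exit, which by Lemma~\ref{primitive2} is equivalent to every cycle in $\mg\setminus\mathcal{H}$ having an exit in $\mg\setminus\mathcal{H}$; this gives Condition~(iii).

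Assembling the three cases proves both implications: a primitive $P$ falls into (i), (ii) or (iii), and each of (i), (ii), (iii) returns a primitive ideal. The main obstacle I anticipate is the non-graded case, specifically the passage from ``prime with a field corner'' to ``primitive'': the algebras here are only locally unital, so I must run the minimal-left-ideal argument with local units rather than a global identity, and I must verify that the corner computation $p_{[v]}(L_K(\mg)/P)p_{[v]} \cong K[x,x^{-1}]/(f(x))$ genuinely descends through both the graded quotient by $I_{(\mathcal{H},B_{\mathcal{H}})}$ and the further quotient by $Q$. The remaining steps are bookkeeping built on Theorems~\ref{prime6} and \ref{primitive1} and Lemma~\ref{primitive2}.
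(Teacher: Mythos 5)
Your proposal is correct and follows essentially the same route as the paper: both directions reduce to Theorem~\ref{prime6}, the quotient-ultragraph isomorphism, Theorem~\ref{primitive1} and Lemma~\ref{primitive2}, with the non-graded case handled by exhibiting a corner of $L_K(\mg)/P$ isomorphic to the field $K[x,x^{-1}]/(f(x))$. The only divergence is at the ``field corner implies primitive'' step, where the paper simply cites the Lanski--Resco--Small result \cite{lrs:otpopr} (a not-necessarily unital ring is primitive iff some corner $aRa$ is primitive), whereas you propose to prove the needed special case directly by showing $Re$ is a minimal left ideal of the prime ring $R=L_K(\mg)/P$; your argument is sound and self-contained, but buys nothing the citation does not.
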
	
\begin{proof}
($\Longrightarrow$)	It immediately follows from Theorem \ref{primitive1}, Lemma \ref{primitive2} and the cases (i), (ii) of Theorem \ref{prime6}.

($\Longleftarrow$) (i) Suppose $P$ is a non-graded prime ideal. By Theorem \ref{prime6} (iii), $P$ is generated by the set $\{p_A\mid A\in \mathcal{H}\} \cup \{ p^{\mathcal{H}}_v\mid v\in  B_{\mathcal{H}}\} \cup \{f(s_c)\}$, where $c$ is a cycle in $\mg$ based at a vertex $v$ such that no vertex on $c$ is the base of
another distinct cycle in $\mg$, $G^0\setminus \bigcup \mathcal{H} = M(v)$ and $f(x)$ is an irreducible polynomial in $K[x, x^{-1}]$. Then, the quotient ultragraph $\mg/(\mathcal{H}, B_{\mathcal{H}})$ is downward directed and contains the cycle $c$ without exits. By \cite[Theorem~3.4~ (1)]{ima:tlpaou}, there exists a $K$-algebra graded isomorphism $\phi: L_K(\mg)/I_{(\mathcal{H}, B_{\mathcal{H}})} \longrightarrow L_K(\mg/(\mathcal{H}, B_{\mathcal{H}}))$. Let $Q := \phi(P/I_{(\mathcal{H}, B_{\mathcal{H}})})$. Since $P$ is generated by the set $\{p_A\mid A\in \mathcal{H}\} \cup \{ p^{\mathcal{H}}_v\mid v\in  B_{\mathcal{H}}\} \cup \{f(s_c)\}$, $Q$ is generated by $f(s_c)$. Since $f(x)$ is irreducible in $K[x, x^{-1}]$, and $p_{[v]}L_K(\mg/(\mathcal{H}, B_{\mathcal{H}}))p_{[v]}\cong K[x, x^{-1}]$ via the isomorphism $\lambda$ defined by: $p_{[v]}\longmapsto 1$, $s_{c}\longmapsto x$ and $s^*_{c}\longmapsto x^{-1}$, we obtain that the ideal $p_{[v]}Q p_{[v]}$, being generated by $p_{[v]} f(s_c) p_{[v]} = f(s_c) = \lambda^{-1}(f(x))$, is a maximal ideal of the $K$-algebra $p_{[v]}L_K(\mg/(\mathcal{H}, B_{\mathcal{H}}))p_{[v]}$.
Now $p_{[v]}L_K(\mg/(\mathcal{H}, B_{\mathcal{H}}))p_{[v]}/p_{[v]}Q p_{[v]}\cong (p_{[v]}+ Q)(L_K(\mg/(\mathcal{H}, B_{\mathcal{H}}))/Q)(p_{[v]}+ Q)\cong (p_v + P)(L_K(\mg)/P)(p_v + P)$
under the natural isomorphisms. Since $p_{[v]}L_K(\mg/(\mathcal{H}, B_{\mathcal{H}}))p_{[v]}/p_{[v]}Q p_{[v]}$ is a field, $(p_v + P)(L_K(\mg)/P)(p_v + P)$ is also a field and, in particular, $(p_v + P)(L_K(\mg)/P)(p_v + P)$ is a commutative primitive ring. It is well known (see \cite[Theorem~1]{lrs:otpopr}) that a not-necessarily unital ring $R$ is primitive if, and only if, there is an idempotent $a\in R$ such that $aRa$ is a  primitive ring. These observations show that $L_K(\mg)/P$ is a primitive ring, and so $P$ is a primitive ideal.

(ii) Suppose $P$ is generated by the set $\{p_A\mid A\in \mathcal{H}\} \cup \{ p^{\mathcal{H}}_v\mid v\in B_{\mathcal{H}}\setminus\{w\}\}$ for some $w\in B_{\mathcal{H}}$ and $G^0\setminus \bigcup \mathcal{H} = M(w)$. Since $G^0\setminus \bigcup \mathcal{H} = M(w)$, $v\ge w$ for all $v\in G^0\setminus \bigcup \mathcal{H}$, and so in the ultragraph $\mg/(\mathcal{H}, B_{\mathcal{H}}\setminus\{w\})$ we have that $[v]\ge [w']$ for all $v\in G^0\setminus \bigcup \mathcal{H}$ and $[w']$ is a sink. This implies that $\mg/(\mathcal{H}, B_{\mathcal{H}}\setminus\{w\})$ is 
downward directed and every cycle in $\mg/(\mathcal{H}, B_{\mathcal{H}}\setminus\{w\})$ has an exit (since every vertex $[v]$ on any cycle in the ultragraph $\mg/(\mathcal{H}, B_{\mathcal{H}}\setminus\{w\})$ satisfies $[v]\ge [w']$). By Theorem~\ref{primitive1}, we have that $L_K(\mg/(\mathcal{H}, B_{\mathcal{H}}\setminus\{w\}))$ is a primitive ring. By \cite[Theorem 3.4 (1)]{ima:tlpaou}, $L_K(\mg)/P \cong L_K(\mg/(\mathcal{H}, B_{\mathcal{H}}\setminus\{w\}))$, and so  $L_K(\mg)/P$ is a primitive ring, that is, $P$ is a primitive ideal of $L_K(\mg)$.

(iii) Suppose that $P$ is generated by the set $\{p_A\mid A\in \mathcal{H}\} \cup \{ p^{\mathcal{H}}_v\mid v\in  B_{\mathcal{H}}\}$, $G^0\setminus \bigcup \mathcal{H}$ is downward directed, and every cycle in $\mg\setminus \mathcal{H}$ has an exit in $\mg\setminus \mathcal{H}$. By \cite[Theorem 3.4 (1)]{ima:tlpaou}, we have that $L_K(\mg)/P \cong L_K( \mg/(\mathcal{H},  B_{\mathcal{H}}))$.
Since $G^0\setminus \bigcup \mathcal{H}$ is downward directed, by the definition of the ultragraph $\mg/(\mathcal{H},  B_{\mathcal{H}})$, $\mg/(\mathcal{H},  B_{\mathcal{H}})$ is downward directed. Since every cycle in $\mg\setminus \mathcal{H}$ has an exit in $\mg\setminus \mathcal{H}$, and by Lemma \ref{primitive2}, every cycle in the ultragraph $\mg/(\mathcal{H},  B_{\mathcal{H}})$ has an exit. Now, by Theorem~ \ref{primitive1}, $L_K(\mg/(\mathcal{H},  B_{\mathcal{H}}))$ is a primitive ring, and so $P$ is a primitive ideal of $L_K(\mg)$, and the proof is finished.
\end{proof}

\section{Chen simple modules of ultragraph Leavitt path algebras}
In \cite{c:irolpa}, Chen constructed simple modules for the Leavitt path algebra $L_K(E)$ of an arbitrary graph $E$, using sinks and the equivalence class of infinite paths tail-equivalent to a fixed infinite path in $E$, and their twisted modules. Chen's construction was extended by Ara and Rangaswamy in \cite{ar:fpsmolpa}, where the authors introduced additive classes of non-isomorphic simple $L_K(E)$-modules which were associated respectively to both infinite emitters $v$ and pairs $(c, f)$ consisting of exclusive cycles $c$ together with irreducible polynomials $f \in K[x, x^{-1}]\setminus\{1-x\}$. They call all these simple modules Chen simple modules. \'{A}nh and the third author, in \cite{anhnam2021}, gave another way to describe Chen simple modules and compute their annihilators. Royer and the second author, in \cite{gr:iaproulpa}, extended Chen's construction of simple modules for graph Leavitt path algebras to ultragraph Leavitt path algebras. The aim of this section is to extend Ara and Ragaswamy's construction of Chen simple modules for graph Leavitt path algebras to ulragraph Leavitt path algebras and compute their annihilators (Theorem \ref{Chenmod4}).

We begin this section by recalling useful notations of ultragraphs. Let $\mg$ be an arbitrary ultragraph. For $v\in G^0$, we define $$\mathcal{H}(v) := \{A\in \mathcal{G}^0\mid w\ngeq v \textnormal{ for all } w\in A\}.$$ For any infinite path $p \in \mg^{\infty}$, we define $$M(p) = \{v\in G^0\mid v \ge w \textnormal{ for some } w\in p^0\}$$ and
$$\mathcal{H}(p) = \{A\in \mathcal{G}^0\mid  w\ngeq v \textnormal{ for all } w\in A \textnormal{ and } v\in p^0\}.$$ Similar to what was done in the proof of Corollary \ref{prime7}, for any vertex $v$ which is either a sink or an infinite emitter, and for any infinite path $p$, the sets $\mathcal{H}(v)$ and $\mathcal{H}(p)$ are hereditary and saturated subsets of $\mathcal{G}^0$. 

Let $K$ be a field and $\mathcal{G}$ an ultragraph. Let $v$ be a sink in $\mathcal{G}$. In \cite[Proposition 3.9]{gr:iaproulpa} Royer and the second author defined the simple left $L_K(\mathcal{G})$-module $N_{v}$ to be the $K$-vector space having $$[v] :=\{p\in \mathcal{G}^*\mid |p|\ge 1, \, v \in r(p)\} \cup \{v\}$$ as a basis and with the scalar multiplication satisfying the following: for all $A\in \mathcal{G}^{0}$, $e\in \mathcal{G}^{1}$ and $\alpha\in [v]$,
\begin{align*}p_A\cdot \alpha= \begin{cases} \alpha &\textnormal{if } s(\alpha) \in A,\\  0 &\textnormal{otherwise,}\end{cases}\quad\quad
s_e\cdot \alpha= \begin{cases} e\alpha &\textnormal{if } s(\alpha) \in r(e),\\  0 &\textnormal{otherwise}\end{cases}\end{align*}
and 
\begin{align*}s^*_e\cdot \alpha= \begin{cases} \beta &\textnormal{if } \alpha =e\beta \textnormal{ for some } \beta\in [v] \textnormal{ with } |\beta| \ge 1,\\ v &\textnormal{if } \alpha = e,\\ 0 &\textnormal{otherwise.}\end{cases}\end{align*}

The following result extends \cite[Lemma 3.1]{ar:fpsmolpa} to ultragraph Leavitt path algeras.

\begin{lem}\label{Chenmod1}
Let $K$ be a field, $\mathcal{G}$ an ultragraph and $v$ a sink in $\mathcal{G}$. Then, the annihilator of the simple left $L_K(\mathcal{G})$-module $N_{v}$ is the ideal of $L_K(\mg)$ generated by $\{p_A\mid A\in \mathcal{H}(v)\} \cup \{ p^{\mathcal{H}(v)}_w\mid w\in  B_{\mathcal{H}(v)}\}$.
\end{lem}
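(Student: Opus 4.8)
Write $I := I_{(\mathcal{H}(v),\,B_{\mathcal{H}(v)})}$ for the ideal generated by $\{p_A\mid A\in\mathcal{H}(v)\}\cup\{p^{\mathcal{H}(v)}_w\mid w\in B_{\mathcal{H}(v)}\}$, so that $I$ is graded. The plan is to prove the two inclusions $I\subseteq\operatorname{Ann}(N_v)$ and $\operatorname{Ann}(N_v)\subseteq I$ separately. For the first, I would check directly that each listed generator kills every basis vector $\alpha\in[v]$. The point is that every $\alpha\in[v]$ satisfies $s(\alpha)\ge v$, i.e.\ $s(\alpha)\in M(v)$. Hence if $A\in\mathcal{H}(v)$ then $A\cap M(v)=\varnothing$, so $s(\alpha)\notin A$ and $p_A\cdot\alpha=0$. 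For $w\in B_{\mathcal{H}(v)}$ I would split into the cases $s(\alpha)\ne w$, in which every term of $p^{\mathcal{H}(v)}_w\cdot\alpha$ vanishes, and $s(\alpha)=w$; in the latter case $w$ is an infinite emitter and $w\ne v$, so $\alpha=e_1\alpha'$ with $s(e_1)=w$, and because $v\in r(\alpha)$ the range $r(e_1)$ meets $M(v)$, whence $r(e_1)\notin\mathcal{H}(v)$. Thus $e_1$ occurs in the defining sum of $p^{\mathcal{H}(v)}_w$ and $s_{e_1}s^*_{e_1}\cdot\alpha=\alpha$ cancels $p_w\cdot\alpha=\alpha$, giving $p^{\mathcal{H}(v)}_w\cdot\alpha=0$.

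The heart of the argument is the reverse inclusion, and the key observation is that $N_v$ is a $\mathbb{Z}$-graded module. Setting $\deg(\alpha):=|\alpha|$ gives a decomposition $N_v=\bigoplus_{m\ge0}N_{v,m}$, where $N_{v,m}$ is spanned by the paths in $[v]$ of length $m$; since $s_e$ raises length by one, $s^*_e$ lowers it by one, and each $p_A$ acts as a degree-zero projection, one gets $L_K(\mathcal{G})_n\cdot N_{v,m}\subseteq N_{v,m+n}$. A routine directness argument then shows that the homogeneous components of any element of $\operatorname{Ann}(N_v)$ again lie in $\operatorname{Ann}(N_v)$, so that $\operatorname{Ann}(N_v)$ is a graded ideal. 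By Theorem~\ref{gi-generatingset}(3) we may therefore write $\operatorname{Ann}(N_v)=I_{(\mathcal{H}',S')}$ with $\mathcal{H}'=\{A\in\mathcal{G}^0\mid p_A\in\operatorname{Ann}(N_v)\}$ and $S'=\{w\in B_{\mathcal{H}'}\mid p^{\mathcal{H}'}_w\in\operatorname{Ann}(N_v)\}$.

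It remains to identify the admissible pair $(\mathcal{H}',S')$. Since $\{s(\alpha)\mid\alpha\in[v]\}=M(v)$, the projection $p_A$ annihilates $N_v$ precisely when $A\cap M(v)=\varnothing$, that is, precisely when $A\in\mathcal{H}(v)$; hence $\mathcal{H}'=\mathcal{H}(v)$ and consequently $B_{\mathcal{H}'}=B_{\mathcal{H}(v)}$. The first part of the proof already shows $p^{\mathcal{H}(v)}_w\in\operatorname{Ann}(N_v)$ for every $w\in B_{\mathcal{H}(v)}$, so $S'=B_{\mathcal{H}(v)}$. Therefore $\operatorname{Ann}(N_v)=I_{(\mathcal{H}(v),B_{\mathcal{H}(v)})}=I$, as required. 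I expect the main obstacle to be establishing the compatible $\mathbb{Z}$-grading on $N_v$: it is exactly this that lets us invoke the graded generating set of Theorem~\ref{gi-generatingset} and reduce the whole problem to the purely combinatorial computation of $\mathcal{H}'$ and $S'$; once the grading is available, the rest is bookkeeping.
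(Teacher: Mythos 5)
Your proof is correct, and the forward inclusion $I\subseteq\operatorname{Ann}(N_v)$ is essentially the same computation as in the paper (the paper checks $p_A\cdot\alpha=0$ for $A\in\mathcal{H}(v)$ and the cancellation $p^{\mathcal{H}(v)}_w\cdot e\beta=(e-e)\beta=0$ exactly as you do). For the reverse inclusion, however, you take a genuinely different route. The paper views $N_v$ as a module over the quotient algebra $L_K(\mg)/I_{(\mathcal{H}(v),B_{\mathcal{H}(v)})}\cong L_K(\mg/(\mathcal{H}(v),B_{\mathcal{H}(v)}))$ and proves faithfulness there: every vertex of the quotient ultragraph connects to the sink $[v]$, so no $p_{[w]}$ annihilates $N_v$ and every cycle has an exit, whence Corollary~\ref{gi-cor}(3) forces the annihilator over the quotient to be zero. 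You instead stay in $L_K(\mg)$, observe that $N_v$ carries a $\mathbb{Z}$-grading by path length compatible with the grading of Lemma~\ref{graded}, deduce that $\operatorname{Ann}(N_v)$ is a graded ideal, and then read off the admissible pair from Theorem~\ref{gi-generatingset}(3) via the identity $\{s(\alpha)\mid\alpha\in[v]\}=M(v)$. Your argument avoids the quotient-ultragraph machinery and the Cuntz--Krieger-type Corollary~\ref{gi-cor}(3) entirely, at the modest cost of verifying the module grading; the paper's argument avoids the grading but needs the isomorphism $L_K(\mg)/I_{(\mathcal{H},S)}\cong L_K(\mg/(\mathcal{H},S))$ and the connectivity analysis of the quotient. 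Both rest ultimately on the generating-set results of Section~2, and your identification of $\mathcal{H}'$ and $S'$ is carried out correctly (in particular $v\notin B_{\mathcal{H}(v)}$ since $v$ is a sink, so no case is missed).
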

\begin{proof}
We denote by $J$ and $I_{(\mathcal{H}(v), B_{\mathcal{H}(v)})}$ the annihilator of $N_{v}$ and the ideal of $L_K(\mathcal{G})$ generated by $\{p_A\mid A\in \mathcal{H}(v)\} \cup \{ p^{\mathcal{H}(v)}_w\mid w\in  B_{\mathcal{H}(v)}\}$, respectively. We first claim that $I_{(\mathcal{H}(v), B_{\mathcal{H}(v)})} \subseteq J$. Indeed, if $A \in \mathcal{H}(v)$, then it is obvious that $p_A\cdot \alpha = 0$ for all $\alpha \in [v]$. If $w\in B_{\mathcal{H}(v)}$ then, since $w \neq v$, we have that $p^{\mathcal{H}(v)}_w\cdot v = (p_w-\ds\sum_{e\in s^{-1}(w),\ r(e)\notin \mathcal{H}(v)}s_es^*_e)\cdot v =0$. Let $\alpha$ be a path of positive length in $\mathcal{G}^*$ such that $v \in r(\alpha)$. Let $f$ be the initial edge of $\alpha$, say $\alpha = f\beta$. Since $v\in r(\alpha)$, $u \ge v$ for some $u\in r(f)$, and so $r(f)\notin \mathcal{H}(v)$. This implies that $$p^{\mathcal{H}(v)}_w\cdot \alpha = (p_w-\ds\sum_{e\in s^{-1}(w),\ r(e)\notin \mathcal{H}(v)}s_es^*_e)\cdot f\beta =(f -f) \beta =0,$$ showing the claim. In order to show the reverse inclusion, we can consider $N_v$ as a simple left $L_K(\mg/(\mathcal{H}(v),  B_{\mathcal{H}(v)}))$-module. By \cite[Theorem 3.4 (1)]{ima:tlpaou}, $L_K(\mg)/I_{(\mathcal{H}(v), B_{\mathcal{H}(v)})} \cong L_K( \mg/(\mathcal{H},  B_{\mathcal{H}}))$. We next prove that $N_v$ is a faithful left $L_K(\mg/(\mathcal{H}(v),  B_{\mathcal{H}(v)}))$-module. Indeed, we denote by $\overline{J}$ the annihilator of the $L_K(\mg/(\mathcal{H}(v),  B_{\mathcal{H}(v)}))$-module $N_v$. Since every vertex in  $\mg/(\mathcal{H}(v),  B_{\mathcal{H}(v)})$ connects to $[v]$, we must have both $p_{[w]}\notin \overline{J}$ for all vertex $[w]$ in $\mg/(\mathcal{H}(v),  B_{\mathcal{H}(v)})$ and every cycle in  $\mg/(\mathcal{H}(v),  B_{\mathcal{H}(v)})$ has an exit. Then, by Corollary \ref{gi-cor} (3), every nonzero ideal of $L_K(\mg/(\mathcal{H}(v),  B_{\mathcal{H}(v)}))$ contains an element $p_{[w]}$ for some vertex $[w]$ in $\mg/(\mathcal{H}(v),  B_{\mathcal{H}(v)})$. Consequently, we obtain that $\overline{J} =0$, and the proof is finished.
\end{proof}

Let $\mg$ be an arbitrary ultragraph. For $p:= e_1\cdots e_n\cdots\in \mg^{\infty}$  and $n\ge 1$, we define $\tau_{> n}(p) = e_{n+1}e_{n+2}\cdots$. Two infinite paths $p, q$ are said to be \textit{tail-equivalent} (written $p\sim q$) if there exist positive integers $m, n$ such that $\tau_{> n}(p) = \tau_{> m}(q)$. Clearly $\sim$ is an equivalence relation on $\mg^{\infty}$, and we let $[p]$ denote the $\sim$ equivalence class of the infinite path $p$.

For $p:= e_1\cdots e_n\cdots\in \mg^{\infty}$, in \cite[Proposition 3.9]{gr:iaproulpa} Royer and the second author defined the simple left $L_K(\mathcal{G})$-module $V_{[p]}$ to be the $K$-vector space having $[p]$ as a basis and with the scalar multiplication satisfying the following:
for all $A\in \mathcal{G}^{0}$, $e\in \mathcal{G}^{1}$ and $\alpha\in [p]$,
\begin{align*}p_A\cdot \alpha= \begin{cases} \alpha &\textnormal{if } s(\alpha) \in A,\\  0 &\textnormal{otherwise,}\end{cases}\quad\quad
s_e\cdot \alpha= \begin{cases} e\alpha &\textnormal{if } s(\alpha) \in r(e),\\  0 &\textnormal{otherwise}\end{cases}\end{align*}
and 
\begin{align*}s^*_e\cdot \alpha= \begin{cases} \tau_{> 1}(\alpha) &\textnormal{if } \alpha =e\tau_{> 1}(\alpha),\\ 0 &\textnormal{otherwise.}\end{cases}\end{align*}

Now, motivated by Chen's construction \cite[Section 6]{c:irolpa} and Ara - Ragaswamy \cite[Section 3]{ar:fpsmolpa}, we consider twisted modules of the module $V_{[p]}$.  Let $c = e_1e_2\cdots e_n$ be a cycle in $\mg$ based at $v$. Then the path $c c c\cdots$ is an infinite path in $\mg$, which we denote by $c^{\infty}$. For $k\in K\setminus \{0\}$, by the universal property of ultragraph Leavitt path algebras and Theorem \ref{gutheo}, there is an algebra automorphism $\sigma_k: L_K(\mg) \longrightarrow L_K(\mg)$ such that $\sigma_k(p_A) = p_A$ for all $A\in \mg^0$, $\sigma_k(s_e) = s_e$ and $\sigma_k(s^*_e) = s^*_e$ for all $e\in \mg^1$ with $e\neq e_1$, and $\sigma_k(s_{e_1}) = ks_{e_1}$ and $\sigma_k(s^*_{e_1}) = k^{-1}s^*_{e_1}$. Then we have the simple left $L_K(\mg)$-module $V^{k}_{[c^{\infty}]}$, which is the twisted module $V^{\sigma_k}_{[c^{\infty}]}$. Denote by $*$ the scalar multiplication in 
$V^{k}_{[c^{\infty}]}$, we have $s_c \ast c^{\infty} = \sigma_k(s_c) c^{\infty}= ks_c c^{\infty} = kc^{\infty}$ and $s_c^* \ast c^{\infty} = k^{-1}c^{\infty}$.

Let $f(x)$ be an irreducible polynomial in $K[x, x^{-1}]$, and denote by $F$ the field $K[x, x^{-1}]/(f(x))$. Since $\overline{x}$ is invertible in $F$, $V^{\overline{x}}_{[c^{\infty}]}$ is a simple left $L_F(\mg)$-module. We denote by $V^f_{[c^{\infty}]}$ the $L_K(\mg)$-module obtained by restricting scalars from $L_F(\mg)$ to $L_K(\mg)$. Repeating the same method described in the proof of \cite[Lemma 3.1]{ar:fpsmolpa}, we obtain that
$V^f_{[c^{\infty}]}$ is a simple left $L_K(\mg)$-module.

\begin{lem}\label{Chenmod2}
Let $K$ be a field, $\mg$ an ultragraph, $c$ an exclusive cycle in $\mg$ based at a vertex $v$, and $f(x)$ an irreducible polynomial in $K[x, x^{-1}]$. Then the following statements hold:

$(1)$ $V^f_{[c^{\infty}]} \cong V_{[c^{\infty}]} \bigotimes_{K[x, x^{-1}]} K[x, x^{-1}]/(f(x))$ as left $L_K(\mg)$-modules;

$(2)$ The annihilator of $V^f_{[c^{\infty}]}$ is the ideal of $L_K(\mg)$ generated by $\{p_A\mid A\in \mathcal{H}(v)\} \cup \{ p^{\mathcal{H}(v)}_w\mid w\in  B_{\mathcal{H}(v)}\} \cup \{f(s_c)\}$.
\end{lem}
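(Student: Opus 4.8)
The plan is to handle the two parts in turn, establishing (1) as a base-change identity and then using it together with the quotient machinery of Section~3 to pin down the annihilator in (2). For part (1), I would first record the key structural consequence of $c$ being exclusive: the operators $s_c$ and $s_c^*$ endow $V_{[c^\infty]}$ with a right $K[x,x^{-1}]$-module structure, with $x$ and $x^{-1}$ acting through $s_c$ and $s_c^*$ at the base vertex $v$, and this right action commutes with the left $L_K(\mg)$-action. Exclusivity is what guarantees that no other cycle interferes, so that the right action is well defined and commutes with the left one; this makes $V_{[c^\infty]}$ an $(L_K(\mg), K[x,x^{-1}])$-bimodule, free as a right module on the set of basis paths in $[c^\infty]$ that do not begin with a full copy of $c$ based at $v$. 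By construction $V^f_{[c^\infty]}$ arises from $V_{[c^\infty]}$ by extending scalars to $F = K[x,x^{-1}]/(f(x))$ and twisting so that $s_c$ acts as $\overline{x}$, so I would make the identification $V^f_{[c^\infty]} \cong V_{[c^\infty]} \otimes_{K[x,x^{-1}]} F$ precise by writing down mutually inverse $L_K(\mg)$-linear maps and checking them on these bases, where the free right-module description renders the verification routine.

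For the inclusion $I \subseteq \mathrm{Ann}(V^f_{[c^\infty]})$ in part (2), with $I$ the ideal generated by $\{p_A \mid A\in\mathcal{H}(v)\}\cup\{p_w^{\mathcal{H}(v)}\mid w\in B_{\mathcal{H}(v)}\}\cup\{f(s_c)\}$, I would check the generators one at a time. Every $\alpha\in[c^\infty]$ satisfies $s(\alpha)\ge v$, so if $A\in\mathcal{H}(v)$ then $s(\alpha)\notin A$ and $p_A\cdot\alpha=0$; the vertices $w\in B_{\mathcal{H}(v)}$ are handled exactly as in the proof of Lemma~\ref{Chenmod1}. Finally, in $V^f_{[c^\infty]}$ the element $s_c$ acts as $\overline{x}$ on the cyclic part, so $f(s_c)$ acts as $f(\overline{x})=0$ in $F$; via the base-change description of part (1) this shows $f(s_c)$ annihilates the whole module, giving $I \subseteq \mathrm{Ann}(V^f_{[c^\infty]})$.

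For the reverse inclusion I would pass to the quotient. Since $I_{(\mathcal{H}(v),B_{\mathcal{H}(v)})}$ annihilates $V^f_{[c^\infty]}$ by the previous paragraph, I may regard $V^f_{[c^\infty]}$ as a module over $\bar L := L_K(\mg/(\mathcal{H}(v),B_{\mathcal{H}(v)})) \cong L_K(\mg)/I_{(\mathcal{H}(v),B_{\mathcal{H}(v)})}$ by \cite[Theorem 3.4 (1)]{ima:tlpaou}. In the quotient ultragraph $c$ is an exclusive cycle without exits, the quotient is downward directed, and $p_{[v]}\bar L p_{[v]}\cong K[x,x^{-1}]$ with $s_c\mapsto x$. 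Because $G^0\setminus\bigcup\mathcal{H}(v)=M(v)$, every vertex connects to $[v]$ while $p_{[v]}$ acts nontrivially, so $\mathrm{Ann}_{\bar L}(V^f_{[c^\infty]})$ contains no $p_{[w]}$; hence, by Lemma~\ref{prime4}, it is generated by some $g(s_c)$ with $g(x)\in K[x]$. Since $c$ has no exits, $c^\infty$ is the only element of $[c^\infty]$ with source $v$, so $p_{[v]}V^f_{[c^\infty]}\cong F$ as a $K[x,x^{-1}]$-module with $x$ acting as $\overline{x}$. Then $g(s_c)\cdot c^\infty=0$ forces $f\mid g$, while $f(s_c)\in\mathrm{Ann}_{\bar L}(V^f_{[c^\infty]})=(g(s_c))$ forces $g\mid f$; irreducibility of $f$ gives that $g(s_c)$ and $f(s_c)$ generate the same ideal of $\bar L$. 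Pulling back through the isomorphism then yields $\mathrm{Ann}(V^f_{[c^\infty]})=I$, completing part (2).

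The main obstacle I anticipate is part (1): making the right $K[x,x^{-1}]$-module structure on $V_{[c^\infty]}$ genuinely well defined and free requires careful bookkeeping of the infinite paths in $[c^\infty]$ and a real use of exclusivity, since the naive graph intuition for how $s_c$ and $s_c^*$ wind around the cycle can be misleading in the ultragraph setting. Once the bimodule and base-change picture is in place, both inclusions in part (2) reduce to the corner computation over $K[x,x^{-1}]$, guided by the prime-ideal analysis already developed in Section~3.
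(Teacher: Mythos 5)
Your overall architecture coincides with the paper's on both parts. For (2) the paper likewise reduces modulo $I_{(\mathcal{H}(v),B_{\mathcal{H}(v)})}$, observes that the annihilator $\overline{J}$ of the module over the quotient algebra contains no $p_{[w]}$, and identifies $\overline{J}$ as the ideal generated by a single $g(s_c)$; the only divergence is the endgame. The paper notes that $\overline{J}$, being the annihilator of a simple module, is primitive, and invokes Theorem~\ref{primitive3} (hence Theorem~\ref{prime6}) to conclude that $g$ may be taken irreducible, so that $f(s_c)\in\overline{J}$ immediately forces $f=g$. You instead apply Lemma~\ref{prime3} (your citation of Lemma~\ref{prime4} is a slip, although the paper commits the same one inside the proof of Theorem~\ref{prime6}) and then run a two-way divisibility argument in the corner $p_{[v]}L_K(\mg/(\mathcal{H}(v),B_{\mathcal{H}(v)}))p_{[v]}\cong K[x,x^{-1}]$: from $g(s_c)\cdot c^{\infty}=g(\overline{x})c^{\infty}$ you get $f\mid g$, and from $f(s_c)\in(g(s_c))$ you get the reverse containment. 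This is sound and slightly more elementary, since it bypasses the classification of primitive ideals; what the paper's route buys is that the irreducibility of $g$ comes for free. Your treatment of the containment $I\subseteq\mathrm{Ann}(V^f_{[c^{\infty}]})$ is in fact more explicit than the paper's, which compresses it into ``proceeding as in the proof of Lemma~\ref{Chenmod1}.''

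For (1) you and the paper again take the same road --- put a $K[x,x^{-1}]$-structure on $V_{[c^{\infty}]}$ through $s_c$ and $s_c^{*}$ and base-change to $F$ --- but the one concrete structural claim you add, namely that $V_{[c^{\infty}]}$ is \emph{free} as a right $K[x,x^{-1}]$-module on the paths in $[c^{\infty}]$ that do not begin with a copy of $c$ at $v$, is false. Already when $\mg$ is a single loop at one vertex, $[c^{\infty}]=\{c^{\infty}\}$, your proposed basis is empty, yet $V_{[c^{\infty}]}=Kc^{\infty}\neq 0$, and $s_c$ acts there as the identity, so $V_{[c^{\infty}]}$ is a torsion $K[x,x^{-1}]$-module. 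Consequently the ``routine verification'' you defer to cannot rest on freeness, and the compatibility of your right action with the full left $L_K(\mg)$-action is precisely the point that needs an honest argument (acting literally by $s_c$ on the left does not commute with the $L_K(\mg)$-action). To be fair, the paper's own proof of (1) is no more detailed: it defines the $K[x,x^{-1}]$-structure via the subalgebra generated by $p_v$, $s_c$, $s_c^{*}$ modulo $p_v-s_cs_c^{*}$ and declares the isomorphism ``straightforward.'' So you have correctly located the delicate step of part (1); you have just not closed it, and the specific mechanism you propose for closing it does not work as stated.
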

\begin{proof}
(1) Let $J$ be the subalgebra of $L_K(\mg)$ generated by $p_v$, $s_c$ and $s^*_c$. By the $\mathbb{Z}$-grading on $L_K(\mg)$, $J$ is isomorphic to the Jacobson algebra $K\langle x, y\mid yx = 1\rangle$ via the map: $p_v\longmapsto 1$, $s_e\longmapsto x$ and $s^*_c\longmapsto y$. Consider $V_{[c^{\infty}]}$ as the left $J$-module obtained by restricting scalars from $L_K(\mg)$ to $J$. By our hypothesis of the cycle $c$, we have $(p_v - s_c s^*_c)\alpha = 0$ for all $\alpha \in [c^{\infty}]$, and so  $p_v - s_c s^*_c$ annihilates $V_{[c^{\infty}]}$. This implies that $V_{[c^{\infty}]}$ is also a module over $J/(p_v - s_c s^*_c)$. We note that $J/(p_v - s_c s^*_c)$ is isomorphic to the Laurent polynomial algebra $K[x, x^{-1}]$ via the map: $p_v\longmapsto 1$, $s_e\longmapsto x$ and $s^*_c\longmapsto x^{-1}$. Therefore, $V_{[c^{\infty}]}$ is a module over $K[x, x^{-1}]$. It is, now, straightforward to show that $V^f_{[c^{\infty}]}$ is isomorphic to $V_{[c^{\infty}]} \bigotimes_{K[x, x^{-1}]} K[x, x^{-1}]/(f(x))$ as left $L_K(\mg)$-modules via the map: $\sum^n_{i=1}k_i\alpha_i\longmapsto \sum^n_{i=1}k_i\otimes \alpha_i$, where $k_i \in K[x, x^{-1}]/(f(x))$ and $\alpha_i\in [c^{\infty}]$.

(2) Proceeding as in the proof of Lemma \ref{Chenmod1}, we arrive at a simple left $L_K(\mg/(\mathcal{H}(v), B_{\mathcal{H}(v)}))$-module, and $\mg/(\mathcal{H}(v), B_{\mathcal{H}(v)})$ has a unique cycle without exits, which is $c$. Furthermore, the annihilator $\overline{J}$ of the $L_K(\mg/(\mathcal{H}(v),  B_{\mathcal{H}(v)}))$-module $V^f_{[c^{\infty}]}$ does not contain $p_{[w]}$ for all vertex $[w]$ in $\mg/(\mathcal{H}(v),  B_{\mathcal{H}(v)})$. Since $\overline{J}$ is a primitive ideal of $L_K(\mg/(\mathcal{H}(v), B_{\mathcal{H}(v)}))$, and by
Theorem \ref{primitive3}, there exists an irreducible polynomial $g$ in $K[x, x^{- 1}]$ such that $\overline{J}$ is the ideal generated by $g(s_c)$.
On the other hand, we have $$f(s_c) \ast c^{\infty} = \sigma_{\overline{x}}(f(s_c))c^{\infty} = f(\sigma_{\overline{x}}(s_c)) c^{\infty} = f(\overline{x})c^{\infty} = 0,$$ so $f(s_c)$ annihilates $V^f_{[c^{\infty}]}$. This shows that $f(x) = g(x)$, thus finishing the proof.
\end{proof}

Let $K$ be a field, $\mg$ an ultragraph and $v$ an infinite emitter in $\mg$. Let $N_{v\infty}$ be the $K$-vertor space having $$[v] :=\{p\in \mathcal{G}^*\mid |p|\ge 1, \, v \in r(p)\} \cup \{v\}$$ as a basis. We define, for each $A\in \mg^0$ and each $e\in \mg^1$, linear maps $P_A, S_e$ and $S^*_e$ on $N_{v\infty}$ as follows: for all $\alpha\in [v]$,
\begin{align*}P_A(\alpha) = \begin{cases} \alpha &\textnormal{if } s(\alpha) \in A,\\  0 &\textnormal{otherwise,}\end{cases}\quad
S_e(\alpha) = \begin{cases} e\alpha &\textnormal{if } s(\alpha) \in r(e),\\  0 &\textnormal{otherwise}\end{cases}\end{align*}
and 
\begin{align*}S^*_e(\alpha) = \begin{cases} \beta &\textnormal{if } \alpha =e\beta \textnormal{ for some } \beta\in [v] \textnormal{ with } |\beta| \ge 1,\\ v &\textnormal{if } \alpha = e,\\ 0 &\textnormal{otherwise.}\end{cases}\end{align*}
Then, similar to what was done in the proof of \cite[Proposition 3.9]{gr:iaproulpa}, we may show that the endomorphisms $\{P_A, S_e, S^*_e\mid A\in \mg^0,\, e\in \mg^1\}$ satisfy the relations analogous to (1) - (4)  in Definition~\ref{utraLevittpathalg}, and so there always exists a $K$-algebra homomorphism $\varphi: L_K(\mathcal{G})\longrightarrow End_K(N_{v\infty})$ given by $\varphi(p_A) = P_A$, $\varphi(s_e) = S_e$ and $\varphi(s^*_e) = S^*_e$. This implies that $N_{v\infty}$ may be made a left $L_K(\mg)$-module via the homomorphism $\varphi$. We denote the scalar multiplication on $N_{v\infty}$ by $\cdot$.

\begin{lem}\label{Chenmod3}
Let $K$ be a field, $\mathcal{G}$ an ultragraph and $v$ an infinite emitter in $\mathcal{G}$. Then, the following statements hold:

$(1)$ $N_{v\infty}$ is a simple left $L_K(\mg)$-module;

$(2)$ If $v \in B_{\mathcal{H}(v)}$, then the annihilator of $N_{v\infty}$ is the ideal of $L_K(\mg)$ generated by $\{p_A\mid A\in \mathcal{H}(v)\} \cup \{ p^{\mathcal{H}(v)}_w\mid w\in B_{\mathcal{H}(v)}\setminus\{v\}\}$;

$(3)$ If $r(s^{- 1}(v))\subseteq \mathcal{H}(v)$, then the annihilator of $N_{v\infty}$ is the ideal of $L_K(\mg)$ generated by $\{p_A\mid A\in \mathcal{H}(v)\} \cup \{ p^{\mathcal{H}(v)}_w\mid w\in B_{\mathcal{H}(v)}\}$.
\end{lem}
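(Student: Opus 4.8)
The plan is to prove $(1)$ by a direct shrink-and-rebuild argument, and to deduce the annihilator computations $(2)$ and $(3)$ by passing to a quotient ultragraph in which $v$ (or a copy of it) becomes a sink, thereby reducing both statements to the sink case already settled in Lemma~\ref{Chenmod1}. For $(1)$, let $W$ be a nonzero submodule and pick $0\neq w=\sum_{i=1}^n k_i\alpha_i\in W$ with the $\alpha_i\in[v]$ distinct and each $k_i\neq 0$. Choosing $\alpha_1$ of maximal length, the description of $s^*_e$ shows that $s^*_{\alpha_1}\cdot\alpha_i\neq 0$ forces $\alpha_1$ to be an initial subpath of $\alpha_i$, which by maximality forces $\alpha_i=\alpha_1$; hence $s^*_{\alpha_1}\cdot w=k_1 v$ and $v\in W$. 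Since $s_\alpha\cdot v=\alpha$ for every $\alpha\in[v]$ (using $v\in r(\alpha)$), we get $W=N_{v\infty}$, proving simplicity.

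For $(2)$ and $(3)$ I would first establish the easy inclusion, namely that each listed generator annihilates $N_{v\infty}$. If $A\in\mathcal{H}(v)$ then, since every $\alpha\in[v]$ has $s(\alpha)\ge v$ while $A$ contains no vertex $\geq v$, we have $p_A\cdot\alpha=0$. For a breaking vertex $w\neq v$, a short case analysis on whether $s(\alpha)=w$ shows $p^{\mathcal{H}(v)}_w\cdot\alpha=0$: the only nontrivial case is $s(\alpha)=w$ (so $|\alpha|\geq 1$), where the initial edge $e_0$ of $\alpha$ satisfies $s(e_0)=w$ and $r(e_0)\notin\mathcal{H}(v)$ (as $r(e_0)$ meets $M(v)$), so $e_0$ contributes the term $s_{e_0}s^*_{e_0}\cdot\alpha=\alpha$ that cancels $p_w\cdot\alpha$. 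This argument requires $w\neq v$, which is exactly why case $(2)$ omits $v$ from the generators; indeed $p^{\mathcal{H}(v)}_v\cdot v=v\neq 0$, so $p^{\mathcal{H}(v)}_v$ does not annihilate $N_{v\infty}$. In case $(3)$ the hypothesis $r(s^{-1}(v))\subseteq\mathcal{H}(v)$ gives $v\notin B_{\mathcal{H}(v)}$, so no exclusion is needed and every breaking vertex appears.

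For the reverse inclusion I would pass to the quotient ultragraph, taking $S=B_{\mathcal{H}(v)}\setminus\{v\}$ in case $(2)$ and $S=B_{\mathcal{H}(v)}$ in case $(3)$; by \cite[Theorem 3.4 (1)]{ima:tlpaou} we have $L_K(\mg)/I_{(\mathcal{H}(v),S)}\cong L_K(\mg/(\mathcal{H}(v),S))$, and by the easy inclusion $N_{v\infty}$ descends to a module over this quotient. In each case the vertex $v$ loses all its relevant outgoing edges, so in case $(3)$ the class $[v]$ is a sink, while in case $(2)$ the extra copy $v'$ is a sink; under the identification $v\leftrightarrow[v]$ (resp. $v\leftrightarrow[v']$) the module $N_{v\infty}$ becomes precisely the sink module of the quotient. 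Since $\bigcup\mathcal{H}(v)=\{u\in G^0\mid u\ngeq v\}$, we have $G^0\setminus\bigcup\mathcal{H}(v)=M(v)$, so every vertex of the quotient connects to this sink; consequently, exactly as in the proof of Lemma~\ref{Chenmod1}, no $p_{[w]}$ lies in the annihilator and every cycle has an exit, whence Corollary~\ref{gi-cor}~$(3)$ forces the quotient module to be faithful. Therefore the annihilator of $N_{v\infty}$ over $L_K(\mg)$ equals $I_{(\mathcal{H}(v),S)}$, which is the stated generating ideal in each case.

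The hard part will be the precise matching in case $(2)$ between $N_{v\infty}$ and the sink module at $v'$ in $\mg/(\mathcal{H}(v),B_{\mathcal{H}(v)}\setminus\{v\})$: one must check that the edges $e$ with $v\in r(e)$ are exactly those retained in the quotient and that they carry $v'$ in their range (through $\overline{r}(e)$), and that the action of each $p_{[A]}$ corresponds correctly via $\overline{A}$ (which contains $v'$ precisely when $v\in A$). Verifying that this identification intertwines the three families of operators $p_{[A]}, s_e, s^*_e$ with the sink-module action is the delicate, if routine, combinatorial point on which the whole reduction rests.
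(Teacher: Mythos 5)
Your proposal is correct in substance, but for the crucial reverse inclusion in parts $(2)$ and $(3)$ it takes a genuinely different route from the paper. The paper's own proof stays inside $L_K(\mathcal{G})$: after the easy inclusion it observes that $J=\mathrm{Ann}(N_{v\infty})$ is a primitive (hence prime) ideal with $\{A\mid p_A\in J\}=\mathcal{H}(v)$, and then rules out $J$ being non-graded by the classification of non-graded prime ideals (Theorem~\ref{prime6}/Theorem~\ref{primitive3}): in case $(2)$ a non-graded prime with that hereditary part would contain $p_v^{\mathcal{H}(v)}$, contradicting $p_v^{\mathcal{H}(v)}\cdot v=v\neq 0$, and in case $(3)$ it would force $v$ to lie on an exclusive cycle, contradicting $r(s^{-1}(v))\subseteq\mathcal{H}(v)$; once $J$ is known to be graded, Theorem~\ref{gi-generatingset} pins it down from $\mathcal{H}_J$ and $S_J$. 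You instead transplant the technique of Lemma~\ref{Chenmod1} (the sink case): pass to $\mathcal{G}/(\mathcal{H}(v),S)$, identify $N_{v\infty}$ with the sink module at $[v']$ (resp.\ $[v]$), and get faithfulness over the quotient from Corollary~\ref{gi-cor}~$(3)$. This works — the connectivity and exit conditions you need do hold since $G^0\setminus\bigcup\mathcal{H}(v)=M(v)$ — but the burden shifts to the module identification in the breaking-vertex case $(2)$, which you explicitly defer as "routine"; that bookkeeping (bases of paths ending at $v$ versus paths ending at $[v']$, and the intertwining of $p_{[\overline{A}]},s_e,s_e^*$) is real work and is precisely what the paper's ideal-theoretic argument avoids. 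A further difference: you prove simplicity in $(1)$ directly by the standard "strip to $v$ and rebuild" argument, where the paper simply cites the analogous proof from the irreducible-representations reference; your argument is fine.
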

\begin{proof}
(1) It is completely similar to the proof of \cite[Proposition 3.9]{gr:iaproulpa}. We prove (2) and (3) below.

Let $J := Ann_{L_K(\mg)}(N_{v\infty})$.  For $A\in \mathcal{H}(v)$, we have $w\ngeq v$ for all $w\in A$, and so $p_A\cdot \alpha =0$ for all $\alpha \in [v]$. This shows that $p_A\in J$ and $\{A\in \mg^0\mid p_A\in J\}  = \mathcal{H}(v)$. Assume that  $v \in B_{\mathcal{H}(v)}$. Let $w\in  B_{\mathcal{H}(v)}$ with $w\neq v$. Clearly $p^{\mathcal{H}(v)}_w \cdot\alpha = 0$ for all path $\alpha \in [v]$ with $w \neq s(\alpha)$. If $\alpha$ is a path in $[v]$ with $s(\alpha) = w$ and $\alpha = e \beta$, where $e\in \mg^1$, then
\[p^{\mathcal{H}(v)}_w \cdot\alpha = (p_w-\ds\sum_{f\in s^{-1}(w),\ r(f)\notin \mathcal{H}(v)}s_fs^*_f)\cdot e\beta =(e -e)\beta =0.\] This implies that $p^{\mathcal{H}(v)}_w \in J$. On the other hand, we have \[p^{\mathcal{H}(v)}_v\cdot v =  (p_w-\ds\sum_{f\in s^{-1}(w),\ r(f)\notin \mathcal{H}(v)}s_fs^*_f)\cdot v = v\cdot v - 0 = v \neq 0,\] so $I_{(\mathcal{H}(v), B_{\mathcal{H}(v)})\setminus \{v\})}\subseteq J$. We claim that $J$ is a graded ideal. Indeed, if $J$ is not graded, then, since $\{A\in \mg^0\mid p_A\in J\}  = \mathcal{H}(v)$ (and by Theorem \ref{primitive3} (1)), we must have $I_{(\mathcal{H}(v), B_{\mathcal{H}(v)})}\subseteq J$, and so $p^{\mathcal{H}(v)}_v \in J$, a contradiction, proving the claim. This shows that $J = I_{(\mathcal{H}(v), B_{\mathcal{H}(v)})\setminus \{v\})}$, proving (2).

Assume that $r(s^{- 1}(v))\subseteq \mathcal{H}(v)$. Then, repeating the same method described above, we have that $I_{(\mathcal{H}(v), B_{\mathcal{H}(v)})}\subseteq J$. If $J$ is not graded then, since $\{A\in \mg^0\mid p_A\in J\}  = \mathcal{H}(v)$ (and by Theorem \ref{primitive3} (1)), there exist an exclusive cycle $c$ in $\mg$ and an irreducible polynomial $f(x)$ in $K[x, x^{-1}]$ such that $J = I_{(\mathcal{H}(v), B_{\mathcal{H}(v)})} + (f(s_c))$ and $G^0\setminus \bigcup \mathcal{H}(v) = M(w)$, where $w = s(c)$ and $(f(s_c))$ is the ideal of $L_K(\mg)$ generated by $f(s_c)$. By the definition of $\mathcal{H}(v))$, we obtain that $v\in c^0 \subseteq M(v)$. In particular, there exists an edge $e\in \mg^1$ such that $s(e) = v$ and $r(e) \cap M(v) \neq \varnothing$, which contradicts our hypothesis that $r(s^{- 1}(v))\subseteq \mathcal{H}(v)$, and so $J$ is a graded ideal. Now, using Lemma \ref{prime4}, we immediately obtain that $J = I_{(\mathcal{H}(v), B_{\mathcal{H}(v)})}$, showing (3), and finishing the proof.
\end{proof}

Let $\mg$ be an arbitrary ultragraph and $K$ an arbitrary field. By a {\it Chen simple module} we mean a simple left $L_K(\mg)$-module of one of the following types:

(1) $N_v$, where $v$ is a sink in $\mg$;

(2) $N_{v\infty}$, where $v$ is an infinite emitter in $\mg$ such that $v \in B_{\mathcal{H}(v)}$;

(3) $N_{v\infty}$, where $v$ is an infinite emitter in $\mg$ such that $r(s^{- 1}(v))\subseteq \mathcal{H}(v)$;

(4) $V_{[p]}$, where $p$ is an infinite path in $\mg$;

(5) $V^f_{[c^{\infty}]}$, where $c$ is an exclusive cycle in $\mg$ and $f(x)$ is an irreducible polynomial in $K[x, x^{-1}]$ with
$f(x) \neq x - 1$.

We are now in a position to provide the main result of this section, which extends Ara and Rangaswamy's result \cite[Theorem 3.9]{ar:fpsmolpa} to ultragraph Leavitt path algebras.

\begin{thm}\label{Chenmod4}
Let $K$ be a field, $\mathcal{G}$ an ultragraph and $P$ a primitive ideal of $L_K(\mg)$. Then there exists a Chen simple $L_K(\mg)$-module $S$ such that the annihilator of $S$ is $P$.
\end{thm}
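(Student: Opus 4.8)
The plan is to run the classification of primitive ideals in Theorem~\ref{primitive3} and, in each of its three cases, exhibit a Chen simple module whose annihilator is computed by one of Lemmas~\ref{Chenmod1}, \ref{Chenmod2}, \ref{Chenmod3}. Writing $\mathcal{H}=\{A\in\mg^0\mid p_A\in P\}$, I would first isolate the bookkeeping step that makes the two descriptions match: whenever the base vertex $v$ furnished by the classification satisfies $G^0\setminus\bigcup\mathcal{H}=M(v)$, one needs $\mathcal{H}=\mathcal{H}(v)$ as saturated hereditary subsets of $\mg^0$ (not merely $\bigcup\mathcal{H}=\bigcup\mathcal{H}(v)$), for only then do $B_{\mathcal{H}}=B_{\mathcal{H}(v)}$ and the generating sets appearing in Theorem~\ref{primitive3} literally coincide with those appearing in the annihilator lemmas. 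This identification is the glue between the combinatorial and the module-theoretic descriptions.

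For case~(i), where $P$ is a non-graded prime ideal, Theorem~\ref{prime6}(iii) presents $P$ as generated by $\{p_A\mid A\in\mathcal{H}\}\cup\{p^{\mathcal{H}}_v\mid v\in B_{\mathcal{H}}\}\cup\{f(s_c)\}$ with $c$ an exclusive cycle based at a vertex $v$ satisfying $G^0\setminus\bigcup\mathcal{H}=M(v)$ and $f$ irreducible. I would take $S=V^f_{[c^{\infty}]}$, a Chen module of type~(5) (or, when $f=x-1$, the type~(4) module $V_{[c^{\infty}]}$); by the identification above together with Lemma~\ref{Chenmod2}(2) its annihilator is the ideal generated by that same set, namely $P$. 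For case~(ii), the distinguished breaking vertex $w\in B_{\mathcal{H}}$ is an infinite emitter with $G^0\setminus\bigcup\mathcal{H}=M(w)$, and I would take $S=N_{w\infty}$. Here one checks $w\in B_{\mathcal{H}(w)}$ (immediate from $\mathcal{H}=\mathcal{H}(w)$ and the fact that $w$ is a breaking vertex), so that $N_{w\infty}$ is a type~(2) module, and Lemma~\ref{Chenmod3}(2) gives $\mathrm{Ann}(N_{w\infty})=I_{(\mathcal{H}(w),B_{\mathcal{H}(w)}\setminus\{w\})}=P$.

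Case~(iii), the graded primitive case, is where the real work lies. Here $P=I_{(\mathcal{H},B_{\mathcal{H}})}$, the complement $G^0\setminus\bigcup\mathcal{H}$ is downward directed, and every cycle in $\mg\setminus\mathcal{H}$ has an exit there; equivalently, by Theorem~\ref{primitive1} the quotient $\mg/(\mathcal{H},B_{\mathcal{H}})$ is a primitive ultragraph, and $L_K(\mg)/P\cong L_K(\mg/(\mathcal{H},B_{\mathcal{H}}))$. The plan is to produce a \emph{faithful} Chen simple module over this quotient and pull it back along the quotient map, a faithful quotient-module having annihilator exactly $P$ in $L_K(\mg)$. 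To build it I would mimic the inductive construction in the proof of Theorem~\ref{primitive1}: using downward-directedness and countability of $G^0$, construct a descending sequence of vertices into which every vertex flows, and, according to how this sequence behaves, select the appropriate terminal object---a sink (giving $N_v$ and Lemma~\ref{Chenmod1}), an infinite emitter whose edges all range inside $\mathcal{H}$ (giving a type~(3) module $N_{v\infty}$ and Lemma~\ref{Chenmod3}(3)), or a genuine infinite path $p$ with $M(p)=G^0\setminus\bigcup\mathcal{H}$ (giving $V_{[p]}$). Faithfulness is then forced by the hypothesis that every cycle has an exit, through the Cuntz--Krieger-type argument already used for Theorem~\ref{cgutheo} and Corollary~\ref{gi-cor}(3): every nonzero ideal of the quotient contains some vertex projection $p_{[w]}$, while the chosen module annihilates none of them.

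I expect the main obstacle to be precisely case~(iii). On the one hand one must carry out the infinite-path construction and, crucially, compute the annihilator of $V_{[p]}$ directly (there is no prior lemma for it), establishing $\mathrm{Ann}(V_{[p]})=I_{(\mathcal{H}(p),B_{\mathcal{H}(p)})}$ together with $\mathcal{H}(p)=\mathcal{H}$. On the other hand one must confront the genuinely ultragraph-specific phenomenon that the ``terminal'' object of a downward-directed ultragraph can be an infinite generalized vertex rather than an honest sink, emitter, or path, so that both the selection of the correct Chen module and the identification $\mathcal{H}=\mathcal{H}(p)$ demand care with generalized vertices and with the equivalence relation defining the quotient. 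By contrast, cases~(i) and~(ii) are essentially a matter of reading the generators off Theorem~\ref{prime6} and matching them to Lemmas~\ref{Chenmod2} and~\ref{Chenmod3} via the identification of hereditary saturated sets recorded at the outset.
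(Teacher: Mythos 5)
Your proposal is correct and follows essentially the same route as the paper: it runs the three-case classification of Theorem~\ref{primitive3}, matches cases (i) and (ii) to Lemmas~\ref{Chenmod2}(2) and \ref{Chenmod3}(2) via the identification $\mathcal{H}=\mathcal{H}(v)$, and in the graded case (iii) splits according to whether the quotient $\mg/(\mathcal{H},B_{\mathcal{H}})$ has a (unique) sink—yielding $N_v$ or a type-(3) module $N_{v\infty}$—or admits an infinite path built by the inductive construction from Theorem~\ref{primitive1}, whose module $V_{[\alpha]}$ is shown faithful over the quotient exactly as you describe. The obstacles you flag (the direct annihilator computation for $V_{[p]}$ and the identification $\mathcal{H}=\mathcal{H}(\alpha)$) are precisely the points the paper's proof addresses.
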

\begin{proof}
Let $\mathcal{H} = \{A\in \mg^0\mid p_A\in P\}$. By Theorem \ref{primitive3}, $P$ is exactly one of the following:
\begin{itemize}
\item[(i)] $P$ is a non-graded prime ideal;
\item[(ii)] $P$ is generated by the set $\{p_A\mid A\in \mathcal{H}\} \cup \{ p^{\mathcal{H}}_v\mid v\in B_{\mathcal{H}}\setminus\{w\}\}$ for some $w\in B_{\mathcal{H}}$ and $G^0\setminus \bigcup \mathcal{H} = M(w)$;		
\item[(iii)] $P$ is generated by the set $\{p_A\mid A\in \mathcal{H}\} \cup \{ p^{\mathcal{H}}_v\mid v\in  B_{\mathcal{H}}\}$, $G^0\setminus \bigcup \mathcal{H}$ is downward directed, and every cycle in $\mg\setminus \mathcal{H}$ has an exit in $\mg\setminus \mathcal{H}$.
\end{itemize}
Assume that (i) holds. Then, by Corollary \ref{prime8}, $P$ is generated by the set $\{p_A\mid A\in \mathcal{H}(v)\} \cup \{ p^{\mathcal{H}}_w\mid w\in  B_{\mathcal{H}(v)}\} \cup \{f(s_c)\}$, where $c$ is an exclusive cycle in $\mg$ based at a vertex $v$ and $f(x)$ is an irreducible polynomial in $K[x, x^{-1}]$. Applying Lemma \ref{Chenmod2} (2), we have that the annihilator of $V^f_{[c^{\infty}]}$ is exactly $P$. 

Next, assume that (ii) holds. We claim that $\mathcal{H} = \mathcal{H}(w)$. Since $G^0\setminus \bigcup \mathcal{H} = M(w)$, we immediately obtain that $\mathcal{H} \subseteq \mathcal{H}(w)$. Let $A\in \mg^0\setminus\mathcal{H}$, that is, $p_A \notin P$. Equivalently, $p_A$ is a nonzero element in the quotient $L_K(\mg)/P$. By \cite[Theorem~3.4~ (1)]{ima:tlpaou}, there exists a $K$-algebra graded isomorphism $\phi: L_K(\mg)/P\longrightarrow L_K(\mg/(\mathcal{H}, B_{\mathcal{H}\setminus\{w\}}))$ such that $\phi(p_A + P) = p_{[\overline{A}]}$. We then have $p_{[\overline{A}]}\neq 0$; equivalently, $[\overline{A}]$ must contain some vertex in $\mg/(\mathcal{H}, B_{\mathcal{H}\setminus\{w\}})$. Since the vertex set of $\mg/(\mathcal{H}, B_{\mathcal{H}\setminus\{w\}})$ is exactly the set $\{[v]\mid v\in G^0\setminus \bigcup \mathcal{H}\} \cup \{[w']\}$, either $A = \{w\}$ or $A$ contains a vertex $v\in G^0\setminus \bigcup \mathcal{H} = M(w)$. This implies that $A\notin \mathcal{H}(w)$, and so $\mathcal{H}(w) \subseteq \mathcal{H}$, proving the claim. Then, by Lemma \ref{Chenmod3} (2), $P$ is equal to the annihilator of the simple left $L_K(\mg)$-module $N_{w\infty}$.

Assume that (iii) holds. Then, $\mg/(\mathcal{H}, B_{\mathcal{H}})$ is downward directed and every cycle in $\mg/(\mathcal{H}, B_{\mathcal{H}})$ has an exit. If $\mg/(\mathcal{H}, B_{\mathcal{H}})$ has a sink $[v]$, then since $\mg/(\mathcal{H}, B_{\mathcal{H}})$ is downward directed, $[v]$ is a unique sink and $G^0\setminus \bigcup \mathcal{H} = M(v)$. Repeating the same method described above, we obtain that $\mathcal{H} = \mathcal{H}(v)$. There are then two cases: either $v$ is a sink in $\mg$, or $v$ is an infinite emitter in $\mg$ such that $r(s^{-1}(v)) \subseteq \mathcal{H}(v)$. By Lemmas \ref{Chenmod1} and \ref{Chenmod3} (3), the annihilator of $N_v$, respectively of $N_{v\infty}$, is exacly $P$.

Assume that $\mg/(\mathcal{H}, B_{\mathcal{H}})$ does not have any sink. Since $G^0\setminus \bigcup \mathcal{H}$ is countable, we may label the elements of $G^0\setminus \bigcup \mathcal{H}$ as $\{v_1, v_2, \hdots\}$. Repeating the same method described in the proof of Theorem \ref{primitive1}, there exist  a sequence $\alpha_1, \alpha_2, \hdots$ of paths in $\mg/(\mathcal{H}, B_{\mathcal{H}})$ and a sequence $[w_1], [w_2], \hdots$ of vertices in $\mg/(\mathcal{H}, B_{\mathcal{H}})$ such that, for each $i\in \mathbb{N}$, 
\begin{itemize} 	
\item[(i)] $\alpha_j=\alpha_ip_j$ for some path $p_j$ with $[w_i] = s(p_j)$ whenever $i\le j$, 
\item[(ii)] $|\alpha_i|\ge i$ for all $i$, and
\item[(iii)] $[v_i]\ge [w_i]$ for all $i$.
\end{itemize}
Now, we may use the paths $\alpha_i$ to construct an infinite path $\alpha$ such that each vertex of $G^0\setminus \bigcup \mathcal{H}$
connects to a vertex in $\alpha^0$. Since every cycle in $\mg/(\mathcal{H}, B_{\mathcal{H}})$ has an exit, $\alpha$ is not tail-equivalent to  $c^{\infty}$ for all exclusive cycle $c$. Since $G^0\setminus \bigcup \mathcal{H} = M(\alpha) = G^0\setminus \bigcup \mathcal{H}(\alpha)$, repeating the same method described in the proof of (ii) we obtain $\mathcal{H} = \mathcal{H}(\alpha)$. Now, similarly to what was done in the proof of Lemma \ref{Chenmod1}, we obtain that the annihilator of the simple left $L_K(\mg)$-module $V_{[\alpha]}$ is exactly $I_{(\mathcal{H}(\alpha), B_{\mathcal{H}(\alpha)})} = P$, and the proof is finished.
\end{proof} 



\section{Exel's Effros-Hahn conjecture for ultragraph Leavitt path algebras}
The aim of this section is to show that the Exel's Effros-Hahn conjecture holds for ultragraph Leavitt path algebras (Theorem \ref{Exel's Effros-Hahn}).

\subsection{Steinberg algebras and Exel's Effros-Hahn conjecture} Steinberg algebras were introduced in \cite{stein:agatdisa} in the context of discrete inverse semigroup algebras and independently in \cite{cfst:aggolpa} as a model for Leavitt path algebras. 



A \textit{groupoid} is a small category in which every morphism is invertible. It can also be viewed as a generalization of a group that has a partial binary operation. 
Let $\mathcal{G}$ be a groupoid. If $x \in \mathcal{G}$, $s(x) = x^{-1}x$ is the source of $x$ and $r(x) = xx^{-1}$ is its range. The pair $(x,y)$ is is composable if and only if $r(y) = s(x)$. The set $\mathcal{G}^{(0)}:= s(\mathcal{G}) = r(\mathcal{G})$ is called the \textit{unit space} of $\mathcal{G}$. Elements of $\mathcal{G}^{(0)}$ are units in the sense that $xs(x) = x$ and
$r(x)x = x$ for all $x\in \mathcal{G}$. For $U, V \subseteq \mathcal{G}$, we define
\begin{center}
	$UV = \{\alpha\beta \mid \alpha\in U, \beta\in V \text{\, and \,} r(\beta) = s(\alpha)\}$ and $U^{-1} = \{\alpha^{-1}\mid \alpha\in U\}$.
\end{center}
 The \textit{isotropy group} of a unit $x\in \mathcal G^{(0)}$ is the group $\mathcal G_x = \{g\in \mathcal G \mid s(g)=r(g)=x\}$.

A \textit{topological groupoid} is a groupoid endowed with a topology under which the inverse map is continuous, and such that the composition is continuous with respect to the relative topology on $\mathcal{G}^{(2)} := \{(\beta, \gamma)\in \mathcal{G}^2\mid s(\beta) = r(\gamma)\}$ inherited from $\mathcal{G}^2$. An \textit{\'{e}tale groupoid} is a topological groupoid $\mathcal{G},$ whose unit space  $\mathcal{G}^{(0)}$ is locally compact Hausdorff, and such that the domain map $s$ is a local homeomorphism. 

An \textit{open bisection} of $\mathcal{G}$ is an open subset $U\subseteq \mathcal{G}$ such that $s|_U$ and $r|_U$ are homeomorphisms onto an open subset of $\mathcal{G}^{(0)}$. Similar to \cite[Proposition 2.2.4]{p:gisatoa} we have that $UV$ and $U^{-1}$ are compact open bisections for all compact open bisections $U$ and $V$ of an  \'{e}tale groupoid  $\mathcal{G}$.
If in addition $\mathcal{G}$  is Hausdorff, then $U\cap V$ is a compact open bisection (see \cite[Proposition 3.7]{stein:agatdisa}). An \'{e}tale groupoid $\mathcal{G}$ is called \textit{ample} if  $\mathcal{G}$ has a base of compact open bisections for its topology.

Let $\mathcal{G}$ be an ample groupoid, and $K$ a field with the discrete topology. We denote by $K^{\mathcal{G}}$ the set of all functions from $\mathcal{G}$ to $K$. Canonically,  $K^{\mathcal{G}}$ has the structure of a $K$-vector space with
operations defined pointwise.

\begin{defn} 
Let $\mathcal{G}$ be an ample groupoid, and $K$ any field.   Let $A_K(\mathcal{G})$ be the $K$-vector subspace of $K^{\mathcal{G}}$ generated by the set
\begin{center}
$\{1_U\mid U$ is a compact open bisection of $\mathcal{G}\}$,
\end{center}
where $1_U: \mathcal{G}\longrightarrow K$ denotes the characteristic function on $U$. The \textit{multiplication} of $f, g\in A_K(\mathcal{G})$ is given by the convolution
\[(f\ast g)(\gamma)= \sum_{\gamma = \alpha\beta}f(\alpha)g(\beta)\] for all $\gamma\in \mathcal{G}$. The $K$-vector subspace $A_K(\mathcal{G})$, with convolution, is called the \textit{Steinberg algebra} of $\mathcal{G}$ over $K$.
\end{defn}
By \cite[Proposition 4.6]{stein:agatdisa}, $A_K(\mathcal{G})$ equipped with convolution is a $K$-algebra.
It is useful to note that $1_U\ast 1_V = 1_{UV}$ for compact open bisections $U$ and $V$. In particular, $1_U\ast 1_V = 1_{U\cap V}$
whenever $U$ and $V$ are compact open subsets of $\mathcal{G}^{(0)}$ (see \cite[Proposition 4.5]{stein:agatdisa}).

Let $K$ be a field, $\mathcal{G}$ a groupoid, and $u\in\mathcal{G}^{(0)}$. Define $L_u := s^{-1}(u)$. The isotropy group $\mathcal{G}_u$ acts on the right of $L_u$. Consider the $K$-vector space $KL_u$ with basis $L_u$. The right action of $\mathcal{G}_u$ on $L_u$ induces a free right $K\mathcal{G}_u$-module structure on $KL_u$ (see \cite[Proposition 7.7]{stein:agatdisa}). Moreover, by \cite[Proposition 7.8]{stein:agatdisa}, $KL_u$ is a left $A_K(\mathcal{G})$-module with the scalar multiplication defined by: \[f\cdot x = \sum_{y\in L_u}f(yx^{-1})y\] for all $f\in A_K(\mathcal{G})$ and $x\in L_u$. It is useful to note (see \cite[Proposition~7.8]{stein:agatdisa}) that
\begin{align*}1_U\cdot x= \begin{cases} yx &\textnormal{if there is a } y\in U \textnormal{ such that } s(y) = r(x),\\  0 &\textnormal{otherwise}.\end{cases}\end{align*}	For a left $K\mathcal{G}_u$-module $V$, we define the corresponding
\textit{induced} left $A_K(\mathcal{G})$-module to be $$\text{Ind}_u(V) = KL_u\otimes_{K\mathcal{G}_u}V.$$ In \cite[Proposition 7.19]{stein:agatdisa} Steinberg obtained the following interesting results.

\begin{thm}[{\cite[Proposition~7.19]{stein:agatdisa}}]\label{Induction}
Let $K$ be a field, $\mathcal{G}$ an ample groupoid, $u\in \mathcal{G}^{(0)}$, and  $V$  a simple left $K\mathcal{G}_u$-module. Then $\rm{Ind}$$_u(V)$ is a simple left $A_K(\mathcal{G})$-module. Moreover, if $V$ and $W$ are non-isomorphic simple left $K\mathcal{G}_u$-modules, then $\rm{Ind}$$_u(V)\ncong \rm{Ind}$$_u(W)$.

\end{thm}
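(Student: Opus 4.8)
The plan is to prove the two assertions separately: first that $\mathrm{Ind}_u(V) = KL_u \otimes_{K\mathcal{G}_u} V$ is simple, and then that $V \mapsto \mathrm{Ind}_u(V)$ is injective on isomorphism classes. For the simplicity statement I would fix a nonzero $\xi \in \mathrm{Ind}_u(V)$ and show that $A_K(\mathcal{G})\cdot \xi$ is all of $\mathrm{Ind}_u(V)$. The starting point is the free right $K\mathcal{G}_u$-module structure on $KL_u$: fixing a transversal $T\subseteq L_u$ for the right $\mathcal{G}_u$-orbits (with $u\in T$ representing the orbit $\mathcal{G}_u$), one has $KL_u\cong\bigoplus_{t\in T} tK\mathcal{G}_u$, so $\mathrm{Ind}_u(V)\cong\bigoplus_{t\in T}(t\otimes V)$ as $K$-vector spaces and $t\otimes v\neq 0$ whenever $v\neq 0$ (this also gives $\mathrm{Ind}_u(V)\neq 0$). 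The key elementary observation is that $x,y\in L_u$ lie in the same orbit if and only if $r(x)=r(y)$: if $r(x)=r(y)$ then $x^{-1}y$ is a well-defined element of $\mathcal{G}_u$ with $y=x(x^{-1}y)$. Hence the ranges $\{r(t)\mid t\in T\}$ are pairwise distinct points of $\mathcal{G}^{(0)}$.

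Writing $\xi=\sum_{t\in F}t\otimes v_t$ with $F\subseteq T$ finite and each $v_t\neq 0$, I would perform three moves using the bisection calculus $1_U\cdot x = yx$ (for the unique $y\in U$ with $s(y)=r(x)$, and $0$ otherwise). First, since $\mathcal{G}^{(0)}$ is locally compact Hausdorff and $\mathcal{G}$ is ample, the finitely many distinct points $r(t)$, $t\in F$, can be separated by a compact open $B\subseteq\mathcal{G}^{(0)}$ with $r(t_0)\in B$ and $r(t)\notin B$ for $t\in F\setminus\{t_0\}$; as $1_B\cdot t = t$ when $r(t)\in B$ and $0$ otherwise, this gives $1_B\cdot\xi = t_0\otimes v_{t_0}$. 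Second, choosing a compact open bisection $U\ni t_0^{-1}$ yields $1_U\cdot t_0 = t_0^{-1}t_0 = u$, hence $1_U\cdot(t_0\otimes v_{t_0})=u\otimes v_{t_0}$. Third, for $g\in\mathcal{G}_u$ and a compact open bisection $U\ni g$ one has $1_U\cdot u = g$, and since $g = u\cdot g$ in $L_u$ we get $1_U\cdot(u\otimes v)=u\otimes(g\cdot v)$; by linearity $A_K(\mathcal{G})$ realizes $u\otimes(a\cdot v)$ for every $a\in K\mathcal{G}_u$, so simplicity of $V$ over $K\mathcal{G}_u$ and $v_{t_0}\neq 0$ give $u\otimes V\subseteq A_K(\mathcal{G})\cdot\xi$. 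Finally, for each $t\in T$ and a compact open bisection $U\ni t$ we have $1_U\cdot u = t$, so $t\otimes V\subseteq A_K(\mathcal{G})\cdot\xi$ for all $t$, whence $A_K(\mathcal{G})\cdot\xi=\mathrm{Ind}_u(V)$.

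For the second assertion I would recover $V$, together with its $K\mathcal{G}_u$-action, intrinsically from $M=\mathrm{Ind}_u(V)$. Using the idempotents $1_B\in A_K(\mathcal{G})$ attached to compact open neighbourhoods $B$ of $u$ in $\mathcal{G}^{(0)}$, and the fact that $\bigcap_{B\ni u}B=\{u\}$ (Hausdorff plus ample), the identity $1_B\cdot(t\otimes v)=t\otimes v$ exactly when $r(t)\in B$ shows that $N_M:=\{m\in M\mid 1_B\cdot m = m \text{ for all compact open } B\ni u\}$ equals $u\otimes V$, so $N_M\cong V$ as $K$-spaces. The $K\mathcal{G}_u$-action is then recovered on $N_M$ by the operators $1_U$, $U\ni g$ a compact open bisection, via $1_U\cdot(u\otimes v)=u\otimes(g\cdot v)$, which is independent of the chosen $U$. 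Since any $A_K(\mathcal{G})$-isomorphism $\theta:\mathrm{Ind}_u(V)\to\mathrm{Ind}_u(W)$ commutes with every $1_B$ and every $1_U$, it carries $N_{\mathrm{Ind}_u(V)}$ onto $N_{\mathrm{Ind}_u(W)}$ and intertwines the recovered actions, producing a $K\mathcal{G}_u$-isomorphism $V\cong W$; this is the contrapositive of the claim.

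The main obstacle is exactly where the topology enters: both the isolation of a single basis vector in the first move and the recovery of the fiber $u\otimes V$ in the second assertion rely on separating $u$ (and the finitely many ranges $r(t)$) from the remaining ranges by compact open sets, which is where the hypotheses that $\mathcal{G}^{(0)}$ is Hausdorff and that $\mathcal{G}$ is ample become indispensable; without Hausdorffness the intersection $\bigcap_{B\ni u}B$ need not reduce to $\{u\}$ and the fiber cannot be pinned down. By contrast, all the remaining steps are formal consequences of the bisection calculus for $1_U$ together with the simplicity of $V$ over $K\mathcal{G}_u$.
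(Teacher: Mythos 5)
Your argument is correct. Note, however, that the paper does not prove this statement at all: it is imported verbatim as \cite[Proposition~7.19]{stein:agatdisa}, so there is no in-paper proof to compare against. What you have written is a sound, self-contained verification that follows the standard (Steinberg-style) route: the identification of the right $\mathcal{G}_u$-orbits in $L_u$ with the points of $r(L_u)$ gives the free decomposition $KL_u\cong\bigoplus_{t\in T}tK\mathcal{G}_u$, the compact open subsets of the Hausdorff unit space isolate a single summand $t_0\otimes v_{t_0}$ of a nonzero element, bisections through $t_0^{-1}$, through the elements of $\mathcal{G}_u$, and through the transversal elements then generate everything; and the fiber $u\otimes V$ is recovered module-theoretically as the common fixed space of the idempotents $1_B$, $B\ni u$, which yields the non-isomorphism statement. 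All the topological inputs you flag (local compactness, Hausdorffness of $\mathcal{G}^{(0)}$, ampleness) are used exactly where they are needed, and the bisection calculus $1_U\cdot x=yx$ is applied consistently with the convolution formula quoted in the paper. I see no gap.
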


The original Effros-Hahn conjecture \cite{Effros-Hahn1967a, Effros-Hahn1967b} suggested that every primitive ideal of a crossed product of an amenable locally compact group with a commutative $C^*$-algebra should be induced from a primitive ideal of an isotropy group. The result was proved by Sauvageot \cite{Sauvageot79} for discrete groups and a more general result than the original conjecture was proved by Gootman and Rosenberg in \cite{Gootnam-Rosenberg79}. Crossed products of the above form are special cases of groupoid $C^*$-algebras and analogues of the Effros-Hahn conjecture in the groupoid setting were achieved by Renault \cite{Renault91} and Ionescu-Williams \cite{Ionescu-Williams09}. R. Exel conjectured at the PARS meeting in Gramado, 2014  that an analogue of the Effros-Hahn conjecture should hold for Steinberg algebras. 

\begin{conj}[{Exel's Effros-Hahn conjecture}]
Let $K$ be a field, $\mg$ an ample groupoid and $I$ a primitive ideal of $L_K(\mg)$. Then $I =Ann_{A_K(\mathcal{G})}(\rm{Ind}$$_u(M))$ for some $u\in \mathcal{G}^{(0)}$ and simple left $K\mathcal{G}_u$-module $M$.
\end{conj}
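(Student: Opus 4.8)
The plan is to prove the conjecture in the setting of ultragraph Leavitt path algebras by combining the groupoid realization of $L_K(\mathcal{G})$ from \cite{dgv:uavlggwatgut} with the classification of primitive ideals via Chen simple modules obtained in Theorem~\ref{Chenmod4}. Write $\mathsf{G}$ for the ample (boundary path) groupoid such that $L_K(\mathcal{G}) \cong A_K(\mathsf{G})$ as $K$-algebras, and recall that Steinberg's induction functor sends each simple $K\mathsf{G}_u$-module $M$ to a simple $A_K(\mathsf{G})$-module $\mathrm{Ind}_u(M)$ (Theorem~\ref{Induction}). Since, by Theorem~\ref{Chenmod4}, every primitive ideal $P$ of $L_K(\mathcal{G})$ equals the annihilator of some Chen simple module $S$, it suffices to exhibit, for each of the five types of Chen simple module, a unit $u \in \mathsf{G}^{(0)}$ and a simple $K\mathsf{G}_u$-module $M$ with $S \cong \mathrm{Ind}_u(M)$; then $P = Ann(S) = Ann(\mathrm{Ind}_u(M))$, which is exactly the required conclusion.

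First I would describe the unit space $\mathsf{G}^{(0)}$ concretely as the boundary path space of $\mathcal{G}$, whose points are the infinite paths together with the finite paths terminating in a singular vertex (a sink, or an infinite emitter of the relevant type), and compute the isotropy group $\mathsf{G}_u$ at each such point. The key dichotomy is that $\mathsf{G}_u$ is trivial unless $u$ is tail-equivalent to an eventually periodic path $c^\infty$ arising from a cycle $c$, in which case $\mathsf{G}_u \cong \mathbb{Z}$, generated by the shift by $|c|$; consequently $K\mathsf{G}_u \cong K$ in the aperiodic case and $K\mathsf{G}_u \cong K[x,x^{-1}]$ in the periodic case. This is precisely the source of the two families of simple isotropy modules appearing in the Chen module list, namely the trivial module $K$ and the modules $K[x,x^{-1}]/(f)$ for $f$ irreducible.

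Next I would match the modules type by type. For a sink $v$ (resp.\ an infinite emitter $v$ with $v \in B_{\mathcal{H}(v)}$, or with $r(s^{-1}(v)) \subseteq \mathcal{H}(v)$), the corresponding unit is the length-zero path at $v$ (resp.\ the appropriate finite boundary path at $v$), whose isotropy is trivial; I would verify directly that $L_u = s^{-1}(u)$ is in bijection with the basis $[v]$ of $N_v$ (resp.\ of $N_{v\infty}$), and that the convolution action $1_U \cdot x$ described after Theorem~\ref{Induction} reproduces the combinatorial action of $p_A$, $s_e$, $s_e^*$, giving $N_v \cong \mathrm{Ind}_u(K)$ (resp.\ $N_{v\infty} \cong \mathrm{Ind}_u(K)$). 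For an infinite path $p$ not tail-equivalent to any $c^\infty$, the unit $u = p$ again has trivial isotropy and $\mathrm{Ind}_p(K) \cong V_{[p]}$, the bijection $L_p \leftrightarrow [p]$ being exactly tail-equivalence. Finally, for an exclusive cycle $c$ and irreducible $f$, take $u = c^\infty$; here $\mathsf{G}_u \cong \mathbb{Z}$ and $K\mathsf{G}_u \cong K[x,x^{-1}]$, the module $M = K[x,x^{-1}]/(f)$ is simple, and using the tensor description in Lemma~\ref{Chenmod2}(1) I would identify $\mathrm{Ind}_{c^\infty}(M) = K L_{c^\infty} \otimes_{K[x,x^{-1}]} K[x,x^{-1}]/(f)$ with $V_{[c^\infty]} \otimes_{K[x,x^{-1}]} K[x,x^{-1}]/(f) \cong V^f_{[c^\infty]}$.

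The main obstacle I anticipate is the careful translation between the two descriptions of the action: the combinatorial path action defining the Chen modules, and the convolution action on $K L_u$ coming from the Steinberg realization. This requires pinning down the explicit correspondence between finite and infinite paths in $\mathcal{G}$ and elements of $\mathsf{G}$, in particular writing the generators $\{p_A, s_e, s_e^*\}$ as characteristic functions $1_U$ of explicit compact open bisections, and checking that the isotropy action by the shift is carried to multiplication by $x$ on $K[x,x^{-1}]$ exactly as in the twist $\sigma_k$ defining $V^f_{[c^\infty]}$. The ultragraph setting adds a layer of bookkeeping here, since generalized vertices and breaking vertices must be handled when describing the boundary path space and the bisections realizing $p_A$ and $p_v^{\mathcal{H}}$; once these identifications are in place, the module isomorphisms, and hence the equality of annihilators, follow at once.
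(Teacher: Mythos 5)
Your proposal follows essentially the same route as the paper's Section~6: realize $L_K(\mathcal G)$ as the Steinberg algebra of the groupoid $\Gamma(\mathbf T,\sigma)$ from \cite{dgv:uavlggwatgut}, compute the isotropy groups (trivial except at eventually periodic points, where they are $\mathbb Z$, as in Proposition~\ref{isotropygroup}), identify each of the Chen simple modules $N_v$, $N_{v\infty}$, $V_{[p]}$, $V^f_{[c^\infty]}$ with an induced module $\mathrm{Ind}_u(K)$ or $\mathrm{Ind}_u(K[x,x^{-1}]/(f))$ via the explicit convolution action and Lemma~\ref{Chenmod2}(1), and conclude by Theorem~\ref{Chenmod4}. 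The plan is correct and matches the paper's argument, including the anticipated bookkeeping with tight filters and generalized vertices.
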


We should mention results relating to this conjecture. In \cite{Demeneghi2019}, motivated by Dokuchaev and Exel's result \cite{Doku-Exel2017}, Demeneghi showed that every ideal of the Steinberg algebra of an ample groupoid is an intersection of kernels of induced representations from isotropy subgroups. In \cite{stein:ioegaaeehc} Steinberg proved that every primitive ideal of the Steinberg algebra over a commutative unital ring is the kernel of an induced representation from an isotropy group. Consequently, he obtained Exel's Effros-Hahn conjecture in two cases: namely, if the base ring is Artinian and all isotropy groups are finite; or if the base ring is the field of complex numbers and the isotropy groups are all locally finite abelian or finite.

\subsection{The groupoid associated to an ultragraph}

In \cite{dgv:uavlggwatgut}, a groupoid is associated with an arbitrary ultragraph in a manner that the corresponding Steinberg algebra is isomorphic to the Leavitt path algebra associated with the ultragraph. In this level of generality, as far as the author's knowledge goes, the use of labelled spaces and the associated concepts are unavoidable. To fully recall these concepts here would be a lengthy endeavor, which would repeat known results. We, therefore, try to keep the preliminaries to a minimum and refer the reader to Section~3.1 of \cite{dgv:uavlggwatgut} for the detailed explanation.

A \emph{labelled graph} consists of a graph $\mathcal E$ together with a surjective \emph{labelling map} $\mathcal L:\mathcal E^1\to\mathcal A$, where $\mathcal A$ is a fixed non-empty set, called an \emph{alphabet}, and whose elements are called \emph{letters}. $\mathcal A^{\ast}$ stands for the set of all finite \emph{words} over $\mathcal A$, together with the \emph{empty word} $\omega$, and $\mathcal A^{\infty}$ is the set of all infinite words over $\mathcal A$. 
The labelling map $\mathcal L$ extends in the obvious way to $\mathcal L:\mathcal E^n\to\mathcal A^{\ast}$ and $\mathcal L:\mathcal E^{\infty}\to\mathcal A^{\infty}$. $\mathcal L^{n}=\mathcal L(\mathcal E^n)$ is the set of \emph{labelled paths $\alpha$ of length $|\alpha|=n$}, and $\mathcal L^{\infty}=\mathcal L({\mathcal E^\infty})$ is the set of \emph{infinite labelled paths}. We consider $\omega$ as a labelled path with $|\omega|=0$, and set $\mathcal L^{\geq 1}=\cup_{n\geq 1}\mathcal L^n$, $\mathcal L^* =\{\omega\}\cup\mathcal L^{\geq 1}$, and $\mathcal L^{\leq \infty} = \mathcal L^{*}\cup \mathcal L^{\infty}$. 

Next, we recall the labelled space associated with an ultragraph.

\begin{defn}
Fix an ultragraph $\mathcal{G}=(G^0, \mathcal{G}^1, r,s)$. Let $\mathcal E_\mathcal{G}=(\mathcal E^0_\mathcal{G}, \mathcal E^1_\mathcal{G},r^{\prime},s^{\prime})$, where $\mathcal E^0_\mathcal{G}=G^0$, $\mathcal E^1_\mathcal{G}=\{(e,w):e\in\mathcal{G}^1, w\in r(e)\}$ and define  $r^{\prime}(e,w)=w$ and $s^{\prime}(e,w)=s(e)$. Set $\mathcal A :=\mathcal{G}^1$, $\mathcal B:=\mathcal G^0$, and define $\mathcal L_\mathcal{G}:\mathcal E^1_\mathcal{G} \to\mathcal A$ by $\mathcal L{}_\mathcal{G}(e,w)=e$. Then,
	$(\mathcal E_\mathcal{G},\mathcal L_\mathcal{G},\mathcal B)$ is the normal labelled space associated with $\mathcal G$.
\end{defn}

There is an inverse semigroup associated to the labelled space $(\mathcal E_\mathcal{G},\mathcal L_\mathcal{G},\mathcal B)$ above, whose semilattice of idempotents is 
\[E(S)=\{(\alpha, A, \alpha) \ | \ \alpha\in\mathcal L^* \ \mbox{and} \ A\in\mathcal B_{\alpha}\}\cup\{0\},\]
where, for $\alpha\in\mathcal L^*$,  \[\mathcal B_\alpha=\mathcal B\cap P({r(\alpha)})=\{A\in\mathcal B\mid A\subseteq r(\alpha)\}.\]

The unit space of the groupoid associated with the ulgragraph $\mathcal G$ is the set of all tight filters in $E(S)$, which we denote by $\mathbf T$. In \cite{dgv:uavlggwatgut}, elements of $\mathbf T$ are described in terms of labelled paths and the associated filters. To obtain this correspondence, first it is observed that for each infinite path in $\mathcal G$ there is only one filter in $E(S)$ associated with it, and for each finite path $\alpha$, there is a filter in $E(S)$ associated to each filter $\xi$ on $\mathcal B_\alpha$ (\cite[Remark~3.5]{dgv:uavlggwatgut}). Then, the following description of $\mathbf T$ is proved.

\begin{prop}[{\cite[Proposition~3.6]{dgv:uavlggwatgut}}]\label{tightfilters}
	For each infinite path $\alpha$ on $\mathcal G$, there is a unique element $\xi\in\mathbf T$, whose associated word is $\alpha$. If $\xi^\alpha$ is a filter of finite type, then $\xi^\alpha\in\mathbf T$ if and only if one of the following holds:
	\begin{enumerate}[(i)]
		\item There exists $v\in G^0_s$ such that $\xi_{|\alpha|}=\uparrow_{\mathcal B_\alpha}\{v\}$, where $\uparrow_{\mathcal B_\alpha}\{v\}=\{A \in \mathcal B_\alpha\mid v\in A\}$.
		\item For all $A\in\xi_{|\alpha|}$, $|\varepsilon(A)|=\infty$.
		\item For all $A\in\xi_{|\alpha|}$, $|A\cap G^0_s|=\infty$.
	\end{enumerate}
\end{prop}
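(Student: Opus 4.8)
The plan is to use Exel's combinatorial characterization of tight filters: a filter $\xi$ in the semilattice $E(S)$ is tight if and only if, for every $e\in\xi$ and every finite cover $Z$ of $e$ (a finite set of elements below $e$ meeting every nonzero element below $e$), one has $Z\cap\xi\neq\varnothing$. The key preliminary step is to exhibit a cofinal family of covers for the elements $(\alpha,A,\alpha)$, so that tightness need only be tested against these. Given $(\alpha,A,\alpha)$ with $A\in\mathcal B_\alpha$, I would define the \emph{canonical cover}
\[ Z_{(\alpha,A)}=\{(\alpha e, r(e),\alpha e)\mid e\in\varepsilon(A)\}\cup\{(\alpha,\{w\},\alpha)\mid w\in A\cap G^0_s\}, \]
where $\varepsilon(A)$ is the set of edges emitted by $A$ (recall the alphabet here is $\mg^1$). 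Since in an ultragraph every vertex is either a sink or emits an edge, a routine check shows $Z_{(\alpha,A)}$ is a cover of $(\alpha,A,\alpha)$, and that it is \emph{finite} precisely when both $\varepsilon(A)$ and $A\cap G^0_s$ are finite; call such $A$ \emph{coverable}. After showing these canonical covers are cofinal, $\xi$ is tight iff it meets $Z_{(\alpha,A)}$ whenever $(\alpha,A,\alpha)\in\xi$ and $A$ is coverable.

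For the infinite-path statement, given $\alpha=e_1e_2\cdots\in\mg^\infty$ I would let $\xi^\alpha$ consist of all $(e_1\cdots e_k,A,e_1\cdots e_k)$ with $k\ge 0$ and $s(e_{k+1})\in A\in\mathcal B_{e_1\cdots e_k}$. One checks directly that this is a filter whose associated word is $\alpha$; it is tight because any canonical cover of one of its elements contains $(e_1\cdots e_{k+1},r(e_{k+1}),e_1\cdots e_{k+1})$, which again lies in $\xi^\alpha$. Uniqueness follows since any tight filter with word $\alpha$ must contain, at each level $k$, a set containing $s(e_{k+1})$ and cannot contain a strictly smaller member without violating the cover condition along the path.

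For the finite-type case I would argue via the cover criterion. Backward implication: if (i) holds then $\xi_{|\alpha|}=\uparrow_{\mathcal B_\alpha}\{v\}$ with $v$ a sink, and for every coverable $A\in\xi_{|\alpha|}$ one has $v\in A\cap G^0_s$, so $(\alpha,\{v\},\alpha)\in Z_{(\alpha,A)}\cap\xi$ and $\xi$ is tight. If (ii) or (iii) holds then no $A\in\xi_{|\alpha|}$ is coverable, so the criterion is satisfied vacuously at level $|\alpha|$; covers at shorter prefixes are met because $\xi$ continues one edge further along $\alpha$. Forward implication (contrapositive): suppose none of (i)--(iii) holds. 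The failure of (ii) and (iii), together with closure of $\xi_{|\alpha|}$ under intersection, yields a single coverable $A\in\xi_{|\alpha|}$, since intersecting a set with finite $\varepsilon$ and one with finitely many sinks produces a set with both finite. As (i) fails, $\xi_{|\alpha|}$ is not the principal ultrafilter at any sink, so $\xi$ contains no $(\alpha,\{w\},\alpha)$ with $w$ a sink; being of finite type, $\xi$ contains none of the path-extending members of $Z_{(\alpha,A)}$ either. Hence $Z_{(\alpha,A)}\cap\xi=\varnothing$, so $\xi$ is not tight.

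The main obstacle I anticipate is the cofinality claim for the canonical covers: justifying rigorously that testing tightness against the $Z_{(\alpha,A)}$ suffices, rather than against arbitrary finite covers, requires a careful refinement argument in $E(S)$ showing that any finite cover is subordinate to a canonical one. A secondary, more bookkeeping-level difficulty is translating cleanly between a filter $\xi$ in $E(S)$ and its induced filter $\xi_{|\alpha|}$ on $\mathcal B_\alpha$, and verifying that covers of proper-prefix elements are automatically met; both are routine but must be handled with care to keep the equivalence with (i)--(iii) exact.
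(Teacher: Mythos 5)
First, a point of calibration: the paper does not prove this proposition. It is quoted verbatim from \cite[Proposition~3.6]{dgv:uavlggwatgut} as background for Section~6, and the authors explicitly defer the labelled-space and tight-filter machinery to Section~3.1 of that reference. So there is no in-paper proof to compare your attempt against; what follows is an assessment of your argument on its own terms.

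Your overall strategy --- Exel's cover criterion together with a distinguished family of covers built from $\varepsilon(A)$ and $A\cap G^0_s$ --- is the right sort of argument, and your forward (contrapositive) direction, including the intersection trick that produces a single ``coverable'' $A\in\xi_{|\alpha|}$ from the failure of (ii) and (iii), is sound. The genuine gap is the one you yourself flag, and it is not merely a technicality: the canonical covers $Z_{(\alpha,A)}$ are \emph{not} cofinal among finite covers of $(\alpha,A,\alpha)$. If $A=B_1\cup\cdots\cup B_n$ with $B_i\in\mathcal B_\alpha$, then $\{(\alpha,B_1,\alpha),\dots,(\alpha,B_n,\alpha)\}$ is already a finite cover of $(\alpha,A,\alpha)$: any nonzero element below it of the form $(\alpha\beta,C,\alpha\beta)$ with $|\beta|\ge 1$ has $s(\beta_1)\in A=\bigcup_i B_i$ and hence meets some $(\alpha,B_i,\alpha)$, while any $(\alpha,D,\alpha)$ with $\varnothing\ne D\subseteq A$ satisfies $D\cap B_i\ne\varnothing$ for some $i$. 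Meeting all covers of this kind forces $\xi_{|\alpha|}$ to be a prime (hence ultra-) filter of $\mathcal B_\alpha$. Consequently your backward implication for cases (ii) and (iii) --- ``no $A\in\xi_{|\alpha|}$ is coverable, so the criterion is satisfied vacuously'' --- fails: it only tests canonical covers. Concretely, take $A$ with infinitely many sinks, written as $A=B_1\sqcup B_2$ with each $B_i$ containing infinitely many sinks, and let $\xi_{|\alpha|}=\uparrow_{\mathcal B_\alpha}A$. This filter satisfies (iii), yet the associated filter of finite type misses the cover $\{(\alpha,B_1,\alpha),(\alpha,B_2,\alpha)\}$ and so is not tight.

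The repair is to build the requirement that $\xi_{|\alpha|}$ be an ultrafilter into the argument from the outset (it is implicit in the source's description of $\mathbf T$, where the finite-type tight filters are parametrized by ultrafilters on $\mathcal B_\alpha$): primeness then disposes of the same-level covers by unions, and your canonical covers handle the path-extending part, after which the trichotomy (i)--(iii) emerges correctly. A smaller issue of the same nature affects your uniqueness claim for infinite paths: you need to argue that tightness forces each level $\xi_n$ to equal the principal ultrafilter $\uparrow_{\mathcal B_{\alpha_1\cdots\alpha_n}}\{s(\alpha_{n+1})\}$, not merely to be contained in it; your sketch gestures at this but does not establish it.
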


Using the above, we have that the elements of $\mathbf T$ can be described in two ways:

\begin{itemize}
	\item if $\xi\in \mathbf T$ is of infinite type, then it is completely described by the infinite path associated to it;
	\item if $\xi\in \mathbf T$ is of finite type, then it is described by a pair $(\alpha,\mathcal F)$, where $\alpha$ is the labelled path associated to $\xi$ and $\mathcal F$ is a filter in $\mathcal B_\alpha$ satisfying one of the three conditions of Proposition~\ref{tightfilters}.
\end{itemize}

The groupoid associated with an ultragraph is a Deaconu-Renault groupoid. To define it, first we recall the definition of the shift map on $ \mathbf T$. 

For each $n\in\mathbb N$, we let $\mathbf T^{(n)}=\{\xi^\alpha \in\mathbf T\mid |\alpha|\geq n\}$. The shift map $\sigma:\mathbf T^{(1)}\to\mathbf T$ is defined as follows.
\begin{enumerate}
	\item If the associated path of $\xi$ is an infinite path $\alpha_1\alpha_2\alpha_3\ldots$, then $\sigma(\xi)$ is the filter associated to the path $\alpha_2\alpha_3\ldots$ as in Proposition~\ref{tightfilters}.
	
	\item If the associated path of $\xi$ is a finite path $\alpha_1\alpha_2\ldots\alpha_n$ with $n\geq 2$, then $\sigma(\xi)$ is the filter associated to the pair $(\alpha_2\ldots\alpha_n,\xi_n)$.
	
	\item If the associated path of $\xi$ is an edge $e\in\mathcal G^1$, then $\sigma(\xi)$ is the filter associated to the pair $(\omega,\uparrow_{\mathcal B} \xi_1)$, recalling that $\omega$ is the empty word.
\end{enumerate}

Finally, the groupoid associated with $\mathcal G$ is given by 
\[\Gamma(\mathbf T,\sigma)=\{(\xi,m-n,\eta)\in\mathbf T\times\mathbb{Z}\times\mathbf T\mid m,n,\in\mathbb N,\xi\in\mathbf T^{(n)},\eta\in\mathbf T^{(m)},\sigma^n(\xi)=\sigma^m(\eta)\},\]
with product and inverse given by
\begin{itemize}
	\item $(\xi,k,\eta)(\zeta,l,\rho)=(\xi,k+l,\rho)$ for $(\xi,k,\eta),(\zeta,l,\rho)\in\Gamma(\mathbf T,\sigma)$ such that $\eta=\zeta$, and 
	\item $(\xi,k,\eta)^{-1}=(\eta,-k,\xi)$ for $(\xi,k,\eta)\in\Gamma(\mathbf T,\sigma)$,
\end{itemize}
respectively. 

To specify a basis for $\Gamma(\mathbf T,\sigma)$, we first recall a basis for the topology on $\mathbf T$. 
For each $e\in E(S)$, define
\begin{equation*} \label{dfn:tight.spec.basic.open.neigh}
	V_e=\{\xi\in\mathbf T\mid e\in\xi\},
\end{equation*}
and for $\{e_1,\ldots,e_n\}$ a finite (possibly empty) set in $E(S)$, define
\[ V_{e:e_1,\ldots,e_n}= V_e\cap V_{e_1}^c\cap\cdots\cap V_{e_n}^c=\{\xi\in\mathbf T\mid e\in\xi,e_1\notin\xi,\ldots,e_n\notin\xi\}.\]
Let $E(S)^+=\bigcup_{n=1}^\infty E(S)^n$, where $E(S)^n$ is the Cartesian product of $n$ copies of $E(S)$. For each $n\in\mathbb N$ and $\mathbf{e}=(e,e_1,\ldots,e_n)\in E(S)^+$, define $V_{\mathbf{e}}=V_{e:e_1,\ldots,e_n}$. Then, the family $\{V_{\mathbf{e}}\}_{\mathbf{e}\in E(S)^+}$ is a basis for the topology on $\mathbf T$.

Now, a basis for the topology on $\Gamma(\mathbf T,\sigma)$  is given by the family of sets of the form
\begin{center}
$\mathcal{V}(U, V, m, n) = \{(\xi, m - n, \eta)\in \Gamma(\mathbf T,\sigma)\mid  (\xi, \eta)\in U\times V, \sigma^m(\xi) = \sigma^n(\eta)\},$
\end{center}
where $m, n\in \bb{N}$, $U$ is an open subset of $\mathbf T^{(m)}$ and $V$ is an open subset of $\mathbf T^{(n)}$. Another basis for the topology on $\Gamma(\mathbf T,\sigma)$ is given by the sets of the form
\begin{center}
$ Z_{(e,e_1,\ldots,e_k)}=\{ (\xi, m - n, \eta)\in \Gamma(\mathbf T,\sigma)\mid  \eta\in  V_{(e,e_1,\ldots,e_k)}, \sigma^m(\xi) = \sigma^n(\eta)\},$ 
\end{center}
see \cite[Section~4]{Boava_Gilles_Mortari}.

Given an ultragraph $\mathcal G$, it is proved in \cite{dgv:uavlggwatgut} that the Steinberg algebra associated with $\Gamma(\mathbf T,\sigma)$ is isomorphic to $L_K(\mathcal G)$. In fact, following the isomorphism described in \cite[Theorem~5.5]{dgv:uavlggwatgut}, we obtain that there is an isomorphism $$\phi_{\mg}:L_K(\mathcal G)\rightarrow A_K(\Gamma(\mathbf T,\sigma))$$ such that $\phi_{\mg}(p_A)=1_{Z_{(\omega,A,\omega)}}$ for all $A\in \mathcal G^0$, $\phi_{\mg} (s_e)=1_{Z_{(e,r(e),\omega)}}$ and $\phi_{\mg}(s_e^*)= 1_{Z_{(\omega,r(e),e)}}$, for all $e\in \mathcal G^1$.

Next, we prove that ultragraphs behave like graphs in terms of the isotropy groups of the associated groupoids. We have the following result, which generalizes \cite[Proposition~6]{Steinberg} to ultragraphs.

\begin{prop}\label{isotropygroup} Let $\mathcal G$ be an ultragraph, $\Gamma(\mathbf T,\sigma)$ the associated groupoid, and $\xi \in \mathbf T$. Then, the isotropy group $\Gamma(\mathbf T,\sigma)_\xi$ is trivial, unless $\xi$ is a filter of infinite type associated with $\xi = \rho \gamma \gamma \cdots$, where $\rho$ is a finite path of length greater than zero and $\gamma$ is a closed path, in which case $\Gamma(\mathbf T,\sigma)_\xi\cong \mathbb Z$.
\end{prop}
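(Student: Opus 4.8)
The plan is to compute the isotropy group directly from the Deaconu--Renault description of $\Gamma(\mathbf T,\sigma)$. First I would observe that an element of $\Gamma(\mathbf T,\sigma)_\xi$ is forced to have the form $(\xi,k,\xi)$ with $k=m-n$ for some $m,n\in\mathbb N$ satisfying $\xi\in\mathbf T^{(n)}\cap\mathbf T^{(m)}$ and $\sigma^n(\xi)=\sigma^m(\xi)$; indeed, both the range and the source of such an element must equal $\xi$, which pins down the two outer coordinates. Since a groupoid element of this form is completely determined by its middle coordinate $k$, the assignment $(\xi,k,\xi)\mapsto k$ is an injective group homomorphism $\Gamma(\mathbf T,\sigma)_\xi\hookrightarrow\mathbb Z$. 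Its image $K=\{m-n\mid \sigma^n(\xi)=\sigma^m(\xi)\}$ is therefore a subgroup of $\mathbb Z$, so it is either $\{0\}$ or isomorphic to $\mathbb Z$. Consequently the whole proof reduces to deciding, for each $\xi$, whether there exist $n<m$ with $\sigma^n(\xi)=\sigma^m(\xi)$, and to identifying such $\xi$ with the eventually periodic infinite paths.

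Next I would dispose of the two cases giving trivial isotropy. If $\xi$ is a filter of finite type, then its associated labelled path is finite, and each application of $\sigma$ strictly decreases the length of this path (cf.\ the three cases in the definition of the shift map). Hence $\sigma^n(\xi)$ and $\sigma^m(\xi)$ have associated words of lengths differing by $|m-n|$, so $\sigma^n(\xi)=\sigma^m(\xi)$ forces $n=m$ and $K=\{0\}$. If $\xi$ is of infinite type, with associated infinite word $w=w_1w_2\cdots$, then by the uniqueness part of Proposition~\ref{tightfilters} a tight filter of infinite type is determined by its word, so for $n\le m$ we have $\sigma^n(\xi)=\sigma^m(\xi)$ if and only if $w_{n+j}=w_{m+j}$ for all $j\ge 1$. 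When $w$ is not eventually periodic this can only happen for $n=m$, again giving $K=\{0\}$ and trivial isotropy.

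Finally, for an infinite type $\xi$ whose word $w$ is eventually periodic, say with pre-period $L$ and period $d\ge 1$, the relation $w_{L+d+j}=w_{L+j}$ for all $j\ge 1$ yields $\sigma^{L}(\xi)=\sigma^{L+d}(\xi)$, so $d\in K$ and the isotropy is nontrivial, hence $\Gamma(\mathbf T,\sigma)_\xi\cong\mathbb Z$ by the first paragraph. To match the stated normal form I would set $\rho=w_1\cdots w_L$ and $\gamma=w_{L+1}\cdots w_{L+d}$, so that $\xi=\rho\gamma\gamma\cdots$; periodicity gives $s(\gamma)=s(w_{L+d+1})\in r(w_{L+d})=r(\gamma)$, so $\gamma$ is a closed path, and in the purely periodic case $L=0$ one simply absorbs one copy of $\gamma$ into $\rho$ to guarantee $|\rho|>0$. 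The main obstacle I anticipate is not the group-theoretic skeleton but the careful bookkeeping at the level of filters: one must justify rigorously that equality of the shifted filters $\sigma^n(\xi)$ and $\sigma^m(\xi)$ is equivalent to equality of the associated words (using Proposition~\ref{tightfilters} in the infinite-type case and the length count in the finite-type case), and that the resulting $\gamma$ is genuinely a closed path in $\mathcal G$ rather than merely a labelled word.
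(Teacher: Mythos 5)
Your proof is correct, but it takes a genuinely different route from the paper's. The paper's argument is essentially a two-line citation: it invokes Proposition 6.10 of de Castro--van Wyk \cite{Gilles_Danie} for the fact that a tight filter has non-trivial isotropy precisely when its associated word has the eventually periodic form $\rho\gamma\gamma\cdots$, and then checks that in that case the isotropy group is isomorphic to $\mathbb Z$. You instead re-derive the whole characterization from the Deaconu--Renault presentation: the injection $(\xi,k,\xi)\mapsto k$ shows a priori that $\Gamma(\mathbf T,\sigma)_\xi$ embeds as a subgroup of $\mathbb Z$, hence is trivial or infinite cyclic; the strict length decrease of the shift kills the finite-type case; and the uniqueness clause of Proposition~\ref{tightfilters} reduces the infinite-type case to eventual periodicity of the word. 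What your approach buys is self-containedness (no reliance on the external labelled-space result) together with the explicit structural fact that the isotropy always sits inside $\mathbb Z$ --- which incidentally makes your version slightly more careful than the paper's, since the paper asserts that the isotropy group is all of $\{(\xi^{\alpha},m,\xi^{\alpha})\mid m\in\mathbb Z\}$, whereas in general it is only the subgroup generated by the period (harmless for the stated conclusion $\cong\mathbb Z$). What the paper's route buys is brevity: the filter-level bookkeeping you flag as the main obstacle (equality of shifted filters versus equality of shifted words) is delegated to \cite{Gilles_Danie}, while in your version it is legitimately discharged by the uniqueness statement of Proposition~\ref{tightfilters} for infinite-type filters and by the length count for finite-type ones, so there is no actual gap.
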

\begin{proof}
By \cite[Proposition~6.10]{Gilles_Danie}, a tight filter $\xi^\alpha\in \mathbf T$ has non-trivial isotropy if, and only if, there exists $\rho$ and $\gamma$, labelled paths of length greater than zero, such that $\alpha = \rho \gamma \gamma \cdots$. Let $\xi^\alpha\in \mathbf T$ be such that $\alpha = \rho \gamma \gamma \cdots$. Then, $\Gamma(\mathbf T,\sigma)_{\xi^\alpha} = \{(\xi^\alpha,m,\xi^\alpha)\mid m\in \mathbb Z\}$, since if $m, n\geq 0$ then $\alpha = \rho \gamma^n \gamma\gamma \cdots = \rho \gamma^m \gamma\gamma \cdots $ shows that $(\xi^\alpha,m-n,\xi^\alpha)\in \Gamma(\mathbf T,\sigma)_{\xi^\alpha}$, thus finishing the proof.
\end{proof}

Let $K$ be a field, $\mg$ an ultragraph and $p$ an infinite path in $\mg$. We then have $$L_{(\xi^{p}, 0, \xi^{p})} := s^{-1}_{\Gamma(\mathbf T,\sigma)}((\xi^{p}, 0, \xi^{p})) = \{(\xi^{q}, k, \xi^{p})\mid q\in [p],\, k\in \mathbb{Z}\}.$$
Consider the $K$-vector space $KL_{(\xi^{p}, 0, \xi^{p})}$ with basis $L_{(\xi^{p}, 0, \xi^{p})}$. Then $KL_{(\xi^{p}, 0, \xi^{p})}$ is a left $A_K(\Gamma(\mathbf T,\sigma))$-module with the  multiplication satisfying the following:
\begin{align*}1_{Z_{(\omega,A,\omega)}}\cdot (\xi^{q}, k, \xi^{p})= \begin{cases} (\xi^{q}, k, \xi^{p}) &\textnormal{if } s(q) \in A,\\  0 &\textnormal{otherwise}\end{cases}\end{align*}

\begin{align*}1_{Z_{(e, r(e),\omega)}}\cdot (\xi^{q}, k, \xi^{p})= \begin{cases} (\xi^{eq}, k+1, \xi^{p}) &\textnormal{if } s(q) \in r(e),\\  0 &\textnormal{otherwise}\end{cases}\end{align*}

\begin{align*}1_{Z_{(\omega, r(e), e)}}\cdot (\xi^{q}, k, \xi^{p})= \begin{cases} (\xi^{\tau_{>1}(q)}, k-1, \xi^{p}) &\textnormal{if } q = e\tau_{> 1}(q),\\  0 &\textnormal{otherwise}\end{cases}\end{align*}
for all $A\in \mathcal{G}^0$, $e\in \mathcal{G}^1$ and $(\xi^{q}, k, \xi^{p})\in L_{(\xi^{p}, 0, \xi^{p})}$. By Theorem \ref{Induction}, $\text{Ind}_{(\xi^{p}, 0, \xi^{p})}(M) = KL_{(\xi^{p}, 0, \xi^{p})}\otimes_{K\Gamma(\mathbf T,\sigma)_{(\xi^{p}, 0, \xi^{p})}}M $ is a simple left $A_K(\mathcal{G})$-module for all simple left $K\Gamma(\mathbf T,\sigma)_{(\xi^{p}, 0, \xi^{p})}$-module $M$.

Assume that $p$ is not tail-equivalent to $c^{\infty}$ for all closed path $c$ in $\mg$. We have that the isotropy group $\Gamma(\mathbf T,\sigma)_{(\xi^{p}, 0, \xi^{p})}$ is trivial by Proposition \ref{isotropygroup}, so $K\Gamma(\mathbf T,\sigma)_{(\xi^{p}, 0, \xi^{p})} \cong K$ and $K$ is a simple $K\Gamma(\mathbf T,\sigma)_{(\xi^{p}, 0, \xi^{p})}$-module. We then have that $$\text{Ind}_{(\xi^{p}, 0, \xi^{p})}(K) = KL_{(\xi^{p}, 0, \xi^{p})}\otimes_{K\Gamma(\mathbf T,\sigma)_{(\xi^{p}, 0, \xi^{p})}}K = KL_{(\xi^{p}, 0, \xi^{p})}\otimes_KK \cong KL_{(\xi^{p}, 0, \xi^{p})}$$ as $A_K(\Gamma(\mathbf T,\sigma))$-modules. By this, and the above isomorphism $\phi_{\mathcal{G}}$, $\text{Ind}_{(\xi^{p}, 0, \xi^{p})}(K)$ may be viewed as a simple left $L_K(\mg)$-module. It is clear that \[\text{Ind}_{(\xi^{p}, 0, \xi^{p})}(K)\cong V_{[p]}\]
as left $L_K(\mathcal{G})$-modules.

Assume that $p$ is tail-equivalent to $c^{\infty}$ for some closed path $c$ in $\mg$. By Proposition~\ref{isotropygroup}, we have that the isotropy group $\Gamma(\mathbf T,\sigma)_{(\xi^{p}, 0, \xi^{p})}= \{(\xi^p,m,\xi^p)\mid m\in \mathbb Z\}\cong \mathbb Z$ via the map: $(\xi^p,m,\xi^p)\longmapsto m$, and so $K\Gamma(\mathbf T,\sigma)_{(\xi^{p}, 0, \xi^{p})} \cong K[x, x^{- 1}]$ via the map: $(\xi^p,1,\xi^p)\longmapsto x$ and
$(\xi^p, - 1,\xi^p)\longmapsto x^{- 1}$. We note that every simple $K[x, x^{-1}]$-module is of the form $K[x, x^{-1}]/(f(x))$, where $(f(x))$ is the ideal of $K[x, x^{-1}]$ generated by an irreducible polynomial $f(x)$.

Let $f(x)$ be an irreducible polynomial in $K[x, x^{-1}]$. Then, $$\text{Ind}_{(\xi^{p}, 0, \xi^{p})}(K[x, x^{-1}]/(f(x)))= KL_{(\xi^{p}, 0, \xi^{p})}\otimes_{K[x, x^{-1}]}K[x, x^{-1}]/(f(x))$$ is a simple left $A_K(\Gamma(\mathbf T,\sigma))$-module (see Theorem~\ref{Induction}). By this note and the above isomorphism $\phi_{\mathcal{G}}$, $\text{Ind}_{(\xi^{p}, 0, \xi^{p})}(K[x, x^{-1}]/(f(x)))$ may be viewed as a simple left $L_K(\mg)$-module. If $c$ is an exclusive cycle then, by Lemma~\ref{Chenmod2} (1), we have that \[\text{Ind}_{(\xi^{p}, 0, \xi^{p})}(K[x, x^{-1}]/(f(x)))\cong V^f_{[p]}\] as left $L_K(\mg)$-modules.

Let $v$ be either a sink or an infinite emitter in $\mg$, and $\xi^v :=\ \uparrow_{\mathcal B} \{v\}$. Then, $$L_{(\xi^{v}, 0, \xi^{v})} := s^{-1}_{\Gamma(\mathbf T,\sigma)}((\xi^{v}, 0, \xi^{v})) = \{(\uparrow_{\mathcal B_{\alpha}} \{v\}, |\alpha|, \xi^{v})\mid \alpha\in \mg^*, v\in r(\alpha), |\alpha|\ge 1\}.$$
Consider the $K$-vector space $KL_{(\xi^{v}, 0, \xi^{v})}$ with basis $L_{(\xi^{v}, 0, \xi^{v})}$. Then, $KL_{(\xi^{v}, 0, \xi^{v})}$ is a left $A_K(\Gamma(\mathbf T,\sigma))$-module with the  multiplication satisfying the following:
\begin{align*}1_{Z_{(\omega,A,\omega)}}\cdot (\uparrow_{\mathcal B_{\alpha}} \{v\}, |\alpha|, \xi^{v})= \begin{cases} (\uparrow_{\mathcal B_{\alpha}} \{v\}, |\alpha|, \xi^{v}) &\textnormal{if } s(\alpha) \in A,\\  0 &\textnormal{otherwise}\end{cases}\end{align*}

\begin{align*}1_{Z_{(e, r(e),\omega)}}\cdot (\uparrow_{\mathcal B_{\alpha}} \{v\}, |\alpha|, \xi^{v})= \begin{cases} (\uparrow_{\mathcal B_{e\alpha}} \{v\}, |\alpha|+1, \xi^{v}) &\textnormal{if } s(q) \in r(e),\\  0 &\textnormal{otherwise}\end{cases}\end{align*}

\begin{align*}1_{Z_{(\omega, r(e), e)}}\cdot (\uparrow_{\mathcal B_{\alpha}} \{v\}, |\alpha|, \xi^{v})= \begin{cases} (\uparrow_{\mathcal B_{\beta}} \{v\}, |\alpha|-1, \xi^{v}) &\textnormal{if } |\alpha|\ge 2 \textnormal{ and }\alpha = e\beta,\\
(\xi^v,0, \xi^v) & \textnormal{if } \alpha = e,\\  0 &\textnormal{otherwise}\end{cases}\end{align*}
for all $A\in \mathcal{G}^0$, $e\in \mathcal{G}^1$ and $(\uparrow_{\mathcal B_{\alpha}} \{v\}, |\alpha|, \xi^{v})\in L_{(\xi^{v}, 0, \xi^{v})}$. By Theorem \ref{Induction}, $\text{Ind}_{(\xi^{v}, 0, \xi^{v})}(M) = KL_{(\xi^{v}, 0, \xi^{v})}\otimes_{K\Gamma(\mathbf T,\sigma)_{(\xi^{v}, 0, \xi^{v})}}M $ is a simple left $A_K(\mathcal{G})$-module for all simple left $K\Gamma(\mathbf T,\sigma)_{(\xi^{v}, 0, \xi^{v})}$-module $M$.
On the other hand, we have that the isotropy group $\Gamma(\mathbf T,\sigma)_{(\xi^{v}, 0, \xi^{v})}$ is trivial by Proposition \ref{isotropygroup}, so $K\Gamma(\mathbf T,\sigma)_{(\xi^{v}, 0, \xi^{v})} \cong K$ and $K$ is a simple $K\Gamma(\mathbf T,\sigma)_{(\xi^{v}, 0, \xi^{v})}$-module. We then have that $$\text{Ind}_{(\xi^{v}, 0, \xi^{v})}(K) = KL_{(\xi^{v}, 0, \xi^{v})}\otimes_{K\Gamma(\mathbf T,\sigma)_{(\xi^{v}, 0, \xi^{v})}}K = KL_{(\xi^{v}, 0, \xi^{v})}\otimes_KK \cong KL_{(\xi^{v}, 0, \xi^{v})}$$ as $A_K(\Gamma(\mathbf T,\sigma))$-modules. By this, and the above isomorphism $\phi_{\mathcal{G}}$, $\text{Ind}_{(\xi^{p}, 0, \xi^{p})}(K)$ may be viewed as a simple left $L_K(\mg)$-module. We also have that 
\begin{align*}\text{Ind}_{(\xi^{v}, 0, \xi^{v})}(K) \cong \begin{cases} N_v&\textnormal{if } v \textnormal{ is a sink},\\  N_{v\infty} &\textnormal{if } v \textnormal{ is an infinite emitter}\end{cases}\end{align*}
as left $L_K(\mathcal{G})$-modules.

From these observations and Theorems \ref{Chenmod4} and \ref{primitive3}, we immediately obtain the following result.

\begin{thm}\label{Exel's Effros-Hahn}
Let $K$ be a field and $\mg$ an ultragraph. Then, every Chen simple $L_K(\mg)$-module may be induced from a simple module over the group $K$-algebra of some isotropy group of the groupoid $\Gamma(\mathbf T,\sigma)$. Consequently, Exel's Effros-Hahn conjecture holds for ultragraph Leavitt path algebras.
\end{thm}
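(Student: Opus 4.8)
The plan is to assemble the final statement from the explicit module identifications carried out in the paragraphs preceding the theorem, together with Theorem~\ref{Chenmod4}. The statement has two assertions: first, that every Chen simple $L_K(\mg)$-module is isomorphic to a module induced from a simple module over an isotropy group algebra of $\Gamma(\mathbf T,\sigma)$; and second, Exel's Effros-Hahn conjecture itself. The first assertion requires only matching the five types of Chen simple modules against the induced modules already computed, while the second is a formal consequence obtained by feeding the first into the classification provided by Theorems~\ref{primitive3} and~\ref{Chenmod4} and transporting annihilators through the isomorphism $\phi_{\mg}\colon L_K(\mg)\to A_K(\Gamma(\mathbf T,\sigma))$.

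First I would record the five identifications, each of which was established in the discussion before the theorem. For a sink $v$, or an infinite emitter $v$, the unit $u=(\xi^{v},0,\xi^{v})$ has trivial isotropy by Proposition~\ref{isotropygroup}, so $K\Gamma(\mathbf T,\sigma)_u\cong K$, and $\mathrm{Ind}_u(K)\cong N_v$ (respectively $N_{v\infty}$). For an infinite path $p$ that is not tail-equivalent to $c^{\infty}$ for any closed path $c$, the unit $u=(\xi^{p},0,\xi^{p})$ again has trivial isotropy, and $\mathrm{Ind}_u(K)\cong V_{[p]}$. Finally, for an exclusive cycle $c$ and an irreducible $f\in K[x,x^{-1}]$ with $f\neq x-1$, the unit $u=(\xi^{c^{\infty}},0,\xi^{c^{\infty}})$ has isotropy $\Gamma(\mathbf T,\sigma)_u\cong\mathbb{Z}$, whence $K\Gamma(\mathbf T,\sigma)_u\cong K[x,x^{-1}]$; since every simple $K[x,x^{-1}]$-module has the form $K[x,x^{-1}]/(f(x))$, Theorem~\ref{Induction} together with Lemma~\ref{Chenmod2}(1) gives $\mathrm{Ind}_u\big(K[x,x^{-1}]/(f(x))\big)\cong V^{f}_{[c^{\infty}]}$. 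In each case the inducing module is simple over the isotropy group algebra, so Theorem~\ref{Induction} guarantees that the induced module is a simple $A_K(\Gamma(\mathbf T,\sigma))$-module, and hence a simple $L_K(\mg)$-module via $\phi_{\mg}$; this establishes the first assertion.

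For the second assertion, let $P$ be a primitive ideal of $L_K(\mg)$. By Theorem~\ref{Chenmod4} (which rests on the classification in Theorem~\ref{primitive3}) there is a Chen simple module $S$ with $P=\mathrm{Ann}_{L_K(\mg)}(S)$. By the first part, $S\cong\mathrm{Ind}_u(M)$ for a suitable $u\in\Gamma(\mathbf T,\sigma)^{(0)}$ and a simple $K\Gamma(\mathbf T,\sigma)_u$-module $M$, where the induced module is regarded as an $L_K(\mg)$-module through $\phi_{\mg}$. Transporting the annihilator through the algebra isomorphism $\phi_{\mg}$ yields $\phi_{\mg}(P)=\mathrm{Ann}_{A_K(\Gamma(\mathbf T,\sigma))}\big(\mathrm{Ind}_u(M)\big)$, which is exactly the form predicted by Exel's Effros-Hahn conjecture for the Steinberg algebra $A_K(\Gamma(\mathbf T,\sigma))$; since $\phi_{\mg}$ is an isomorphism, the conjecture transfers back to $L_K(\mg)$.

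The bulk of the genuine work has already been discharged in the preparatory computations, so the remaining points are largely bookkeeping. The step I expect to require the most care is verifying that $\phi_{\mg}$ intertwines the module structures so that annihilators correspond correctly---that is, that an $L_K(\mg)$-module obtained by restricting an $A_K(\Gamma(\mathbf T,\sigma))$-module along $\phi_{\mg}$ has annihilator equal to the $\phi_{\mg}$-preimage of the original annihilator---and that the explicit intertwiners listed before the theorem are genuine module isomorphisms rather than mere vector-space bijections. Once these identifications are confirmed, the Effros-Hahn statement follows immediately.
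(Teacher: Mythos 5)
Your proposal is correct and follows essentially the same route as the paper: the paper's proof consists precisely of the preparatory identifications of the five types of Chen simple modules with induced modules $\mathrm{Ind}_u(M)$ (via Proposition~\ref{isotropygroup}, Theorem~\ref{Induction}, and Lemma~\ref{Chenmod2}), combined with Theorems~\ref{Chenmod4} and~\ref{primitive3} and the isomorphism $\phi_{\mg}$. You merely spell out the bookkeeping that the paper compresses into ``we immediately obtain,'' which is a reasonable elaboration rather than a different argument.
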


\section*{Acknowledgements}
Daniel Gon\c{c}alves was partially supported by Conselho Nacional de Desenvolvimento Cient\'{i}fico e Tecnol\'{o}gico - CNPq and Capes-PrInt, Brazil.

%
%
%
%

\vskip 0.5 cm \vskip 0.5cm {

\end{document}